\documentclass[a4paper]{gtart}

\usepackage[inner=20mm, outer=20mm, textheight=245mm]{geometry}


\usepackage{psfrag, graphicx, subfigure, epsfig,color}

\usepackage{amsmath, amssymb, latexsym, euscript}

\usepackage[colorlinks,citecolor=blue,linkcolor=blue, backref=page]{hyperref}
\usepackage[nameinlink]{cleveref}

\usepackage[margin=1cm]{caption}

\usepackage{tikz}
\usetikzlibrary{matrix}

\usepackage{mathptmx, wrapfig}

\usepackage{algorithmicx}
\usepackage{algpseudocode}

\usepackage{longtable}


\usepackage[colorinlistoftodos]{todonotes}
\setlength{\marginparwidth}{2cm}



\DeclareMathOperator{\cross}{c}
\DeclareMathOperator{\SL}{SL}

\newcommand{\meridian}{\mathfrak{m}}
\newcommand{\longitude}{\mathfrak{l}}

\newcommand{\tri}{\mathcal T}

\theoremstyle{plain}
\newtheorem{theorem}{Theorem}
\newtheorem*{theorem*}{Theorem}
\newtheorem{lemma}[theorem]{Lemma}
\newtheorem{proposition}[theorem]{Proposition}
\newtheorem{corollary}[theorem]{Corollary}

\theoremstyle{definition}

\newtheorem*{definition*}{Definition}

\newtheorem{algorithm}[theorem]{Algorithm}

\theoremstyle{remark}
\newtheorem{remark}[theorem]{Remark}

\numberwithin{equation}{section}


\begin{document}

\title{Slope norm and an algorithm to compute the crosscap number}
\author{William Jaco, J. Hyam Rubinstein, Jonathan Spreer and Stephan Tillmann}

\begin{abstract} 
We give three algorithms to determine the crosscap number of a knot in the 3--sphere using $0$--efficient triangulations and normal surface theory. Our algorithms are shown to be correct for a larger class of complements of knots in closed 3--manifolds. The crosscap number is closely related to the minimum over all spanning slopes of a more general invariant, the slope norm. For any irreducible 3--manifold $M$ with incompressible boundary a torus, we give an algorithm that, for every slope on the boundary that represents the trivial class in $H_1(M; \mathbb{Z}_2)$, determines the maximal Euler characteristic of any properly embedded surface having a boundary curve of this slope. We complement our theoretical work with an implementation of our algorithms, and compute the crosscap number of knots for which previous methods would have been inconclusive. In particular, we determine 196 previously unknown crosscap numbers in the census of all knots with up to 12 crossings.
\end{abstract}


\primaryclass{57K31, 57K10, 57K32}
\keywords{3--manifold, knot, efficient triangulation, crosscap number, knot genus, boundary slope, slope norm}
\makeshorttitle


\section{Introduction}
\label{sec:intro}

The main contribution of this paper are algorithms to compute the crosscap number of a knot in the 3--sphere.
Efforts to compute the crosscap number of a knot have been at the centre of various other research projects using a variety of techniques. Among these is a formula for the crosscap number of torus knots by Teragaito \cite{Teragaito04CCTorusKnots}; an algorithm for alternating knots developed by Adams and Kindred \cite{Adams13CrosscapNoForAlternatingKnots}; or upper and lower bounds for the general case via the Jones polynomial by Kalfagianni and Lee~\cite{Kalfagianni16BoundsForCrosscapNumber}. Recent work by Ito and Takimura~\cite{Ito18CCKnotProjections,Ito20CCAndVolumeBounds,Ito20CCBoundsAlternatingKnots} establishes various further bounds. The \texttt{KnotInfo} data base \cite{knotinfo}, and in particular their page on crosscap numbers, gives a detailed overview and results for specific knots.

Our work completes an approach put forward by Burton and Ozlen~\cite{burton12-crosscap}.
Our starting point is Jaco and Sedgwick's generalisation~\cite{Jaco-decision-2003} of a celebrated result by Hatcher~\cite{Hatcher-boundary-1982}: they showed that in any orientable, irreducible 3--manifold with incompressible boundary a torus there are only finitely many boundary slopes of \emph{geometrically} incompressible and $\partial$-incompressible surfaces. 
This paper rests on a technical observation in \cite{Jaco-decision-2003} that concerns fundamental surfaces (stated here as \Cref{cor:JS-key}). We refer the reader to either of \cite{burton12-crosscap, jaco03-0-efficiency, Jaco-decision-2003, Matveev-algorithmic-2007} for the definitions and basic properties of normal surface theory in (singular) triangulations; 
\cite{burton12-crosscap, jaco03-0-efficiency} for basic facts concerning $0$--efficient triangulations used in this paper; and \cite{Burton-computing-2018, Tillus-normal-2008, tollefson98-quadspace}
for basic properties of working with quadrilateral coordinates only.

In \Cref{sec:algo,sec:crosscap,sec:knot genus}, we first develop our algorithms using standard coordinates for normal surfaces under varying hypotheses on the triangulations. We then extend the theory to work in quadrilateral coordinates in \Cref{sec:quad_space}, and report on our computational results within this framework in \Cref{sec:computation}. Throughout this paper, a \textbf{fundamental surface} is a normal surface whose normal coordinate is fundamental in standard (triangle-quadrilateral) normal surface space; and a \textbf{$\mathbf{Q}$--fundamental surface} is a connected normal surface whose normal $Q$--coordinate is fundamental in quadrilateral space.

\textbf{Slope norm.} The dual tree to the Farey tesselation of the hyperbolic plane is used to organise the set of all boundary slopes of properly embedded surfaces with a single boundary curve. This allows us to give an algorithm that, for an irreducible $3$-manifold with boundary a torus and a slope on the boundary that represents the trivial class in $H_1(M; \mathbb{Z}_2)$, determines the maximal Euler characteristic of any properly embedded connected surface having connected boundary of this slope (\Cref{thm:slope norm-algo}). We call the negative of this number the \emph{norm of the slope}, and the minimum over all these norms the \emph{slope norm} of $M$. 
This norm is used in forthcoming work to apply the complexity bounds given in \cite{Jaco-norm-2020, Jaco-ideal-2020, Jaco-minimal-2009} to infinite families of Dehn fillings. The existence of such an algorithm goes back to Schubert~\cite{Schubert1961-bestimmung}, see also Matveev~\cite[Theorems 4.1.10 \& 4.1.11]{Matveev-algorithmic-2007}. Our new contribution is that we do not need to adapt a triangulation to the slope.
Similar results to our slope norm algorithm were obtained independently by Howie~\cite{Howie-geography}, and used for different applications. In this paper, our work on slope norm feeds into the proof of the main theorem. We later show that there is an algorithm to determine the slope norm of $M$ and the set of all minimising slopes in quadrilateral space (\Cref{cor:slope norm-Q}).

\textbf{Crosscap number.} We next give two algorithms to determine the crosscap number of a knot in the 3--sphere (\Cref{thm:crosscap-suitable,thm:crosscap}). Both use standard coordinates for normal surfaces, but they make different assumptions on the underlying triangulation. 
Our algorithms are shown to be correct for a larger class of complements of knots in closed 3--manifolds $N$ that represent the trivial class in $H_1(N; \mathbb{Z}_2)$, including all that have a complete hyperbolic structure of finite volume.

Burton and Ozlen~\cite{burton12-crosscap} introduce triangulations that contain no normal 2-spheres and have an edge in the boundary that represents the meridian. These triangulations are called \emph{efficient suitable}, and they guarantee the existence of fundamental spanning surfaces of maximal Euler characteristic. Burton and Ozlen describe a procedure (Algorithm 3), where on input a knot in the 3--sphere the output is either one integer (the crosscap number) or a pair of consecutive integers (one of which is the crosscap number). \Cref{thm:crosscap-suitable} shows that in the latter case, the crosscap number is the larger integer. The apparent difficulty of determining the crosscap number algorithmically lies in the case, where every maximal Euler characteristic fundamental spanning surface is orientable. This is solved in \Cref{lem:main_lemma}. The following result thus improves and generalises \cite[Algorithm 3]{burton12-crosscap}.
\begin{theorem}
  \label{thm:crosscap-suitable}
Let $M$ be the exterior of a non-trivial knot $K$ in a closed 3--manifold $N$ with $[K] = 0 \in H_1(N; \mathbb{Z}_2).$ Suppose that $M$ is irreducible and contains no embedded non-separating torus and no embedded Klein bottle. Let $\tri$ be an efficient suitable triangulation of $M$. Then $\cross(K)=\min (A, B),$ where
\begin{itemize}
\item A = $\min \{ \; 1 - \chi(S) \; \mid S \text{ is a non-orientable fundamental spanning surface for } K \; \}$
\item B = $\min \{ \; 2 - \chi(S) \; \mid S \text{ is an orientable fundamental spanning surface for } K \; \}$
\end{itemize}
and we let $\min \emptyset = \infty.$
\end{theorem}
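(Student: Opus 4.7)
The plan is to prove the two inequalities $\cross(K) \leq \min(A,B)$ and $\min(A,B) \leq \cross(K)$ separately.

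For the upper bound $\cross(K) \leq \min(A,B)$, I would argue constructively. A non-orientable fundamental spanning surface $S$ with $\chi(S) = 1-k$ is itself a spanning surface with exactly $k = 1 - \chi(S)$ crosscaps, so $\cross(K) \leq 1 - \chi(S)$. Given an orientable fundamental spanning surface $S$ of genus $g$, so $\chi(S) = 1 - 2g$, I would construct a non-orientable spanning surface as follows: choose a small open disk $D$ in the interior of $S$ and a $3$--ball neighbourhood $B$ of $D$ in $M$ with $B \cap S = \overline{D}$, then replace $\overline{D}$ by an embedded Möbius band in $B$ with boundary $\partial D$. The result is a properly embedded non-orientable surface with the same boundary curve as $S$ and Euler characteristic $\chi(S) - 1 = -2g$, hence $2g+1 = 2 - \chi(S)$ crosscaps. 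Taking minima gives $\cross(K) \leq \min(A, B)$.

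For the reverse inequality, let $F$ be a non-orientable spanning surface realising $\cross(K) = k$, so $\chi(F) = 1 - k$. Since $M$ is irreducible, compressing and $\partial$-compressing $F$ as far as possible only increases $\chi$ and preserves the property of being a spanning surface, so I may assume $F$ is geometrically incompressible and $\partial$-incompressible. Normalising $F$ in $\tri$ then yields a normal spanning surface $F'$ with $\chi(F') \geq \chi(F)$: the absence of normal $2$--spheres (efficiency) and the presence of an edge on the meridian slope (suitability) together ensure that the normalisation neither loses the meridian boundary nor decreases Euler characteristic. Applying \Cref{cor:JS-key} to $F'$ produces a fundamental summand $S$ that is itself a spanning surface with $\chi(S) \geq \chi(F') \geq \chi(F)$. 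If $S$ is non-orientable this immediately gives $A \leq 1 - \chi(S) \leq 1 - \chi(F) = k$, so $\min(A,B) \leq \cross(K)$.

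The main obstacle is the remaining case, where every fundamental spanning surface of maximal Euler characteristic is orientable; in particular every fundamental spanning summand of $F'$ extracted in the previous step is orientable, and so the naive decomposition argument cannot account for the crosscap hidden in $F$. This case is precisely the content of \Cref{lem:main_lemma}, whose conclusion furnishes an orientable fundamental spanning surface $S$ with $2 - \chi(S) \leq \cross(K)$, and hence $B \leq \cross(K)$. The topological hypotheses on $M$ — irreducibility, absence of embedded non-separating tori, and absence of embedded Klein bottles — enter at this stage to rule out the obstructions to passing from orientable decomposition pieces back to a non-orientable minimiser of the correct size. Combining the two cases then yields $\min(A,B) \leq \cross(K)$, completing the argument.
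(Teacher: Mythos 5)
Your overall skeleton (an easy upper bound; a lower bound via normalisation, extraction of a single fundamental summand with boundary using \Cref{cor:JS-key}, and \Cref{lem:main_lemma} for the residual case) matches the paper, but two of your steps fail as stated. First, the upper bound: there is no properly embedded M\"obius band in a $3$--ball $B$ with boundary a circle on $\partial B$. Doubling the pair along $\partial B$ would produce an embedded Klein bottle in $S^3$; equivalently, a one-sided properly embedded surface would represent a non-zero class in $H_2(B,\partial B;\mathbb{Z}_2)=0$. So one cannot add a crosscap to an embedded orientable spanning surface in the interior of an orientable $3$--manifold. The inequality $\cross(K)\le 2-\chi(S)$ is still true, but the construction is the one the paper uses: attach a half-twisted band (a saddle) to $S$ along an arc in $\partial M$. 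This lowers $\chi$ by one, makes the surface non-orientable, and changes the boundary slope by two meridians, which is still a spanning slope.

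Second, and more seriously, the step ``\,$\partial$--compressing $F$ \dots\ preserves the property of being a spanning surface, so I may assume $F$ is \dots\ $\partial$--incompressible\,'' is false and assumes away the central difficulty of the theorem. A boundary compression of a surface with connected boundary changes the boundary slope (one step in the dual Farey tree), and the new slope need not be a spanning slope; worse, there are knots --- $7a_6=7_4$, cited at the end of the paper --- for which \emph{no} $\partial$--incompressible non-orientable spanning surface realises the crosscap number, so the reduction is unavailable in principle. The suitability hypothesis is used precisely to get around this, and in a different way from what you describe: isotope a non-orientable spanning surface $S_n$ of maximal Euler characteristic to meet the meridian edge in exactly one point; the normalisation procedure preserves this count, so every intermediate surface is again spanning; hence a non-trivial boundary compression during normalisation would yield a spanning surface of strictly larger Euler characteristic, which (in the regime $\chi(S_n)\ge\chi(S_o)$) must again be non-orientable, contradicting maximality --- so $S_n$ normalises by isotopies. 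Finally, your endgame misstates \Cref{lem:main_lemma}: its conclusion is a \emph{non-orientable} fundamental spanning surface with $\chi=\chi(S_n)=\chi(S_o)$, giving $A\le\cross(K)$; an orientable fundamental spanning surface with $2-\chi\le\cross(K)=1-\chi(S_n)$ would satisfy $\chi>\chi(S_o)$, which is impossible. You also leave out the case $\chi(S_n)<\chi(S_o)$, where $\cross(K)=B$ is certified by the orientable fundamental summand of $S_o$ (whose normalisation is unproblematic, since an incompressible orientable surface with boundary on a torus is automatically $\partial$--incompressible).
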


\Cref{thm:crosscap-suitable} has the following consequence in the case where a knot has no orientable spanning surface:

\begin{corollary}
  \label{cor:crosscap-suitable}
Let $M$ be the exterior of a non-trivial knot $K$ in a closed 3--manifold $N$ with $[K] = 0 \in H_1(N; \mathbb{Z}_2)$ and $[K] \neq 0 \in H_1(N; \mathbb{Z}).$
Suppose that $M$ is irreducible and contains no embedded non-separating torus and no embedded Klein bottle. Let $\tri$ be an efficient suitable triangulation of $M$. Then 
\[
\cross(K) = \min \{ \; 1 - \chi(S) \; \mid S \text{ is a fundamental spanning surface for } K \; \}
\]
\end{corollary}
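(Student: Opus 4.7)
My plan is to deduce this corollary directly from \Cref{thm:crosscap-suitable}, by showing that under the added hypothesis $[K] \neq 0 \in H_1(N;\mathbb{Z})$ the quantity $B$ in the theorem is $\infty$. Given this, \Cref{thm:crosscap-suitable} gives $\cross(K) = \min(A,B) = A$; and because the absence of orientable spanning surfaces also forces every fundamental spanning surface to be non-orientable, the set over which $A$ is taken coincides with the full set of fundamental spanning surfaces for $K$, yielding the stated formula.

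The one substantive step is to rule out orientable spanning surfaces altogether. I would argue by contradiction: suppose $F$ is an orientable properly embedded surface in $M$ whose boundary is a single curve on $\partial M$ meeting the meridian of $K$ transversely once. Capping $\partial F$ off with an annulus inside the removed solid torus neighborhood of $K$ produces an orientable surface $\widehat F$ in $N$ with $\partial \widehat F = K$. An orientation of $F$ extends across the annulus to $\widehat F$, so $\widehat F$ represents an integral $2$--chain in $N$ whose boundary is $K$, forcing $[K] = 0 \in H_1(N;\mathbb{Z})$ and contradicting the hypothesis. Hence no orientable spanning surface for $K$ exists; in particular there is no orientable fundamental one, so $B = \infty$ in \Cref{thm:crosscap-suitable}.

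There is essentially no obstacle beyond this orientation/homology bookkeeping, and the bulk of the work has already been done in \Cref{thm:crosscap-suitable}. The finiteness of the resulting minimum (so that the formula really returns $\cross(K)$ rather than $\infty$) is guaranteed by the standing hypothesis $[K] = 0 \in H_1(N;\mathbb{Z}_2)$, which is precisely the condition ensuring the existence of a non-orientable spanning surface for $K$ and hence, by the machinery underlying \Cref{thm:crosscap-suitable}, of a non-orientable fundamental one.
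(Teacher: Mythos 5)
Your proposal is correct and follows exactly the route the paper intends: the extra hypothesis $[K]\neq 0\in H_1(N;\mathbb{Z})$ rules out orientable spanning surfaces (the paper records this equivalence in its discussion of geometric framings, via the same cap-off-with-an-annulus argument you give), so $B=\infty$, every fundamental spanning surface is non-orientable, and \Cref{thm:crosscap-suitable} reduces to the stated formula. No issues.
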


For arbitrary 0--efficient triangulations (which do not contain properly embedded normal spheres or discs), we give a more general algorithm (\Cref{thm:crosscap}) that uses the slope norm algorithm. 
The basic idea is that Burton and Ozlen's suitable triangulations ensure that minimal spanning surfaces can be found amongst the normal surfaces even though in general they may be $\partial$--compressible. In an arbitrary triangulation, there may be no non-orientable normal spanning surfaces of maximal Euler characteristic, but our slope norm algorithm keeps track of optimal boundary compression sequences by determining the shortest path from a given slope to the subtree $\mathcal{F}_e$ corresponding to the slopes of spanning surfaces.

\begin{theorem}
  \label{thm:crosscap}
Let $M$ be the exterior of a knot $K$ in a closed 3--manifold $N$ with $[K] = 0 \in H_1(N; \mathbb{Z}_2).$ Suppose that $M$ is irreducible and contains no embedded non-separating torus and no embedded Klein bottle. Let $\tri$ be a $0$-efficient triangulation of $M$ and suppose that the coordinates for a meridian for $K$ on the induced triangulation $\tri_\partial$ of $\partial M$ are given. Then $\cross(K)=\min (A, B, Z),$ where
\begin{itemize}
\item A = $\min \{ \; 1 - \chi(S) \; \mid S \text{ is a non-orientable fundamental spanning surface for } K \; \}$
\item B = $\min \{ \; 2 - \chi(S) \; \mid S \text{ is an orientable fundamental spanning surface for } K \; \}$
\item Z = $\min \{ \; 1 - \chi(S) + d(\partial S, \mathcal{F}_e) \; \mid S \text{ is a fundamental non-spanning surface}$
\item[] \hspace{10cm}$\text{with connected essential boundary} \; \}$
\end{itemize}
and we let $\min \emptyset = \infty.$
\end{theorem}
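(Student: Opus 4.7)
The plan is to establish $\cross(K) \leq \min(A, B, Z)$ and $\cross(K) \geq \min(A, B, Z)$ separately. For the upper bound, each term corresponds to an explicit construction of a non-orientable spanning surface. The term $A$ is tautological: any non-orientable fundamental spanning surface $S$ directly yields $\cross(K) \leq 1 - \chi(S)$. For $B$, I would apply the orientable case of \Cref{lem:main_lemma}, attaching a small unknotted M\"obius band to an orientable fundamental spanning surface $S$ to produce a non-orientable spanning surface of Euler characteristic $\chi(S) - 1$. For $Z$, starting from a fundamental surface $S$ with connected essential boundary of non-spanning slope $\sigma$, I would walk along the Farey dual tree from $\sigma$ to $\mathcal{F}_e$; at each of the $d(\partial S, \mathcal{F}_e)$ steps I attach an essential band in a collar of $\partial M$ realising the adjacent Farey transition, decreasing $\chi$ by one. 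Since the walk terminates in $\mathcal{F}_e$ after starting off it, the resulting spanning surface is non-orientable.

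For the lower bound, I would choose a non-orientable spanning surface $F$ realising $1 - \chi(F) = \cross(K)$, put it in normal form using $0$-efficiency of $\tri$ and irreducibility of $M$, and decompose $F = \sum_i S_i$ into fundamental summands with $\chi(F) = \sum_i \chi(S_i)$, where $\partial F$ is built from the $\partial S_i$ by cut-and-paste in $\partial M$. Under the hypotheses, no summand is a sphere or a disc, every closed summand has $\chi \leq 0$ (and every closed non-orientable summand has $\chi \leq -1$, since Klein bottles are ruled out), and closed tori are separating. If only one summand $S_1$ carries boundary, then $\partial F = \partial S_1$, so $S_1$ is a spanning surface: when $S_1$ is non-orientable, $1 - \chi(F) \geq 1 - \chi(S_1) \geq A$ is immediate; when $S_1$ is orientable, the non-orientability of $F$ forces a non-orientable closed summand of $\chi \leq -1$, giving $1 - \chi(F) \geq 2 - \chi(S_1) \geq B$.

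The remaining case, when boundary is distributed across multiple summands whose slopes need not lie in $\mathcal{F}_e$ but combine via cut-and-paste into the spanning slope of $\partial F$, is handled via the Farey dual tree organisation from \Cref{thm:slope norm-algo} together with \Cref{cor:JS-key}. I would aim to isolate a single fundamental summand $S_i$ with connected essential boundary of slope $\sigma \notin \mathcal{F}_e$ that carries the slope discrepancy, and argue that its Euler-characteristic deficit together with the Farey distance $d(\sigma, \mathcal{F}_e)$ account for $1 - \chi(F)$, yielding $1 - \chi(F) \geq 1 - \chi(S_i) + d(\partial S_i, \mathcal{F}_e) \geq Z$.

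The principal obstacle is this last step: quantitatively matching slope discrepancies in the Farey tree to Euler-characteristic loss across a fundamental decomposition. The mechanism should be that each edge traversed in the Farey tree corresponds to a boundary compression shifting $\chi$ by one, so that a summand's individual slope distance to $\mathcal{F}_e$ is paid in Euler characteristic by $F$. Making this rigorous will require an extremal selection of the summand together with careful use of the tree-traversal arguments that appeared in the proof of \Cref{thm:slope norm-algo}.
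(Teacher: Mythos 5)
There are two genuine gaps in your lower-bound argument, and both sit exactly where the real content of the theorem lies.

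First, your plan to ``put $F$ in normal form'' and decompose it into fundamental summands presupposes that the optimal non-orientable spanning surface is isotopic to a normal surface. It is incompressible, but it need \emph{not} be $\partial$-incompressible, and a $\partial$-compressible surface does not normalise by isotopies: the normalisation procedure performs genuine boundary compressions, each of which raises $\chi$ by one and \emph{changes the boundary slope}, so the resulting normal surface is generally not a spanning surface at all. This is precisely the mechanism by which the term $Z$ enters the lower bound in the paper: if normalising $S_n$ requires $k>0$ boundary compressions, one lands on a normal surface $S_n'$ with $\chi(S_n)=\chi(S_n')-k$ and non-spanning slope; \Cref{cor:JS-key} then yields a single fundamental summand $G_n$ with $\partial G_n=\partial S_n'$, and maximality of $\chi(S_n)$ forces $\chi(S_n')=\chi(G_n)$ and $k=d(\partial G_n,\mathcal{F}_e)$, giving $\cross(K)\ge Z$. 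Your attempt to extract $Z$ instead from a multi-summand decomposition of a normal representative of $F$ cannot work, because in the relevant case no such normal representative exists; moreover, by \Cref{cor:JS-key} the ``boundary distributed across multiple summands'' scenario you flag as the principal obstacle does not actually occur (only one summand can carry essential boundary, else the sum has disconnected boundary or trivial curves), so the difficulty you identify is not the one that needs solving.

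Second, in the case where the unique boundary summand $S_1$ is orientable, you assert that non-orientability of $F$ ``forces a non-orientable closed summand of $\chi\le -1$.'' This is false: a Haken sum of orientable surfaces can be non-orientable, because the regular exchanges may join patches in an orientation-reversing fashion. In particular all closed summands can be tori of Euler characteristic zero, in which case $\chi(S_1)=\chi(F)$ and your inequality $1-\chi(F)\ge 2-\chi(S_1)$ fails. This configuration --- $S=R+T$ with $R$ an orientable spanning surface, $T$ a torus, and $S$ non-orientable --- is exactly what \Cref{lem:main_lemma} is needed to rule out (for a least-weight $S$, under the no-non-separating-torus and no-Klein-bottle hypotheses), and its proof is the longest and most delicate part of the paper. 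Your proposal replaces that lemma with an incorrect one-line claim, so the case $\chi(S_n)=\chi(S_o)$ is not handled. The upper-bound half of your proposal is essentially the paper's argument and is fine (though the reference to \Cref{lem:main_lemma} for the $B$ term is misplaced; attaching a saddle is all that is needed there).
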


Since $d(\partial S, \mathcal{F}_e)=0$ for a spanning surface $S,$ the above theorem could have been stated using two terms rather than three, but we wanted to keep the notation in line with \Cref{thm:crosscap-suitable}.
Again, in the case where there is no orientable spanning surface, this specialises to:

\begin{corollary}
  \label{cor:crosscap}
Let $M$ be the exterior of a non-trivial knot $K$ in a closed 3--manifold $N$ with $[K] = 0 \in H_1(N; \mathbb{Z}_2)$ and $[K] \neq 0 \in H_1(N; \mathbb{Z}).$
Suppose that $M$ is irreducible and contains no embedded non-separating torus and no embedded Klein bottle. Let $\tri$ be a $0$-efficient triangulation of $M$ and suppose that the coordinates for a meridian for $K$ on the induced triangulation $\tri_\partial$ of $\partial M$ are given. 
Then 
\[
\cross(K) = \min \{ \; 1 - \chi(S) + d(\partial S, \mathcal{F}_e) \; \mid S \text{ is a fundamental surface with connected essential boundary }  \}
\]
\end{corollary}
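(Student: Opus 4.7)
My plan is to derive \Cref{cor:crosscap} directly from \Cref{thm:crosscap} by showing that under the extra hypothesis $[K] \neq 0 \in H_1(N;\mathbb{Z})$ the set indexing $B$ is empty, and then checking that the remaining $\min(A, Z)$ coincides with the single expression appearing in the corollary.

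The only nontrivial ingredient is the claim that $M$ admits no orientable spanning surface. The boundary slope of such a surface would have to be null in $H_1(M;\mathbb{Z})$; and by a Mayer--Vietoris computation on $N = M \cup \nu(K)$, a nontrivial slope on $\partial M$ which is null in $H_1(M;\mathbb{Z})$ exists if and only if $[K] = 0 \in H_1(N;\mathbb{Z})$. This is the same homological observation implicit in \Cref{cor:crosscap-suitable}. Consequently the orientable spanning set in \Cref{thm:crosscap} is empty, $\min B = \infty$, and hence $\cross(K) = \min(A, Z)$.

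The rest is bookkeeping. For any spanning surface $S$ the boundary slope lies in $\mathcal{F}_e$ by definition, so $d(\partial S, \mathcal{F}_e) = 0$; the values $1 - \chi(S)$ occurring in $A$ therefore coincide with $1 - \chi(S) + d(\partial S, \mathcal{F}_e)$ evaluated on non-orientable fundamental spanning surfaces, which have connected essential boundary since a spanning slope represents a nontrivial isotopy class on $\partial M$. The set $Z$ already enumerates $1 - \chi(S) + d(\partial S, \mathcal{F}_e)$ over fundamental non-spanning surfaces with connected essential boundary, without restriction on orientability. Since no orientable spanning surface exists, the union of the index sets for $A$ and $Z$ is exactly the collection of fundamental surfaces with connected essential boundary, and $\min(A, Z)$ reproduces the displayed formula. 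The main obstacle is the homological step giving $B = \infty$; everything else is a rearrangement of \Cref{thm:crosscap}.
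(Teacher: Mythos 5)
Your proposal is correct and follows essentially the same route the paper takes: the paper derives the corollary from \Cref{thm:crosscap} by noting that $[K]\neq 0\in H_1(N;\mathbb{Z})$ forces the orientable spanning set to be empty (this equivalence is recorded in the ``Geometric framing'' discussion of \Cref{sec:crosscap}, where it is shown that $K$ has an orientable spanning surface if and only if $[K]=0\in H_1(N;\mathbb{Z})$), so that $B=\infty$ and the terms of $A$ and $Z$ merge into the single displayed minimum exactly as you describe.
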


\textbf{Knot genus.} As an interlude, we show in \Cref{sec:knot genus} that in the framework of this paper, one can give a short proof of Schubert's classical result that the genus of a knot is realised by one of the fundamental surfaces. Schubert~\cite{Schubert1961-bestimmung} originally proved this in the context of normal surfaces with respect to handle decompositions.

\begin{theorem}
  \label{thm:knot-genus}
Let $M$ be the exterior of a non-trivial knot $K$ in a closed 3--manifold $N$ with $[K] = 0 \in H_1(N; \mathbb{Z}).$ Suppose that $M$ is irreducible and let $\tri$ be a 0--efficient triangulation of $M$. Then an orientable spanning surface of maximal Euler characteristic is amongst the fundamental surfaces.
\end{theorem}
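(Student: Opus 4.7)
The plan is to start with a genus-minimising orientable spanning surface $S$ in $M$ (i.e.\ one realising the maximal Euler characteristic) and show that some fundamental summand of $S$ in a Haken sum decomposition is itself such a surface. Since $M$ is irreducible, I may take $S$ to be incompressible and $\partial$-incompressible (compressions and $\partial$-compressions can only increase $\chi$, so maximality rules them out). Then I normalise $S$ with respect to $\tri$; because $\tri$ is $0$-efficient and $S$ is (boundary-)incompressible, the standard normalisation moves are ambient isotopies and hence preserve both the topological type of $S$ and $\chi(S)$.

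Next, I express $S$ as a non-negative integer combination of fundamental surfaces in normal coordinates, $S = \sum_{i=1}^{k} n_i F_i$. Additivity of normal coordinates yields the two crucial identities
\[
\chi(S) \;=\; \sum_{i=1}^k n_i\, \chi(F_i), \qquad [\partial S] \;=\; \sum_{i=1}^k n_i\, [\partial F_i] \in H_1(\partial M; \mathbb Z),
\]
where the right-hand side of the second equality represents the longitudinal class $\longitude$. Writing $[\partial F_i] = a_i\meridian + b_i\longitude$ gives $\sum n_i a_i = 0$ and $\sum n_i b_i = 1$. Because $\tri$ is $0$-efficient, no $F_i$ is a normal $2$-sphere or a normal disc, and because $M$ is orientable and irreducible no $F_i$ is a projective plane; hence $\chi(F_i)\le 0$ for every $i$, and in particular every summand with non-empty boundary is a candidate bounded surface of non-positive Euler characteristic.

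The core step is to extract a single summand $F_{i_0}$ that is an orientable spanning surface with $\chi(F_{i_0}) \ge \chi(S)$; maximality of $\chi(S)$ will then force equality. My plan is to run a Thurston-norm-style comparison on the decomposition. For each summand $F_i$ that is orientable with boundary purely longitudinal (multiplicity $b_i>0$), $F_i$ is a candidate Seifert surface for the $b_i$-cable of $K$, so Thurston-norm additivity on rays gives $-\chi(F_i) \ge b_i\,(-\chi(S))$. Summing these over the orientable longitudinal summands, and using that the remaining summands all contribute $\chi(F_i)\le 0$, the identities $\sum n_i \chi(F_i) = \chi(S)$ and $\sum n_i b_i = 1$ can only be simultaneously satisfied if every inequality is sharp. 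It follows that some summand $F_{i_0}$ is orientable with $a_{i_0}=0$, $b_{i_0}=1$, and $\chi(F_{i_0}) = \chi(S)$, hence a fundamental minimum-genus Seifert surface for $K$.

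The hard part is the bookkeeping for the summands that are non-orientable or have a non-trivial meridional boundary contribution. These need not be spanning, yet they enter the boundary and Euler-characteristic sums. I plan to deal with them using the constraint $\sum n_i a_i = 0$, which forces their meridional contributions to pair up with equal multiplicity, together with the invariants supplied by \Cref{cor:JS-key}, which bounds the type of summand that can appear in the Haken decomposition of an incompressible, $\partial$-incompressible surface in a $0$-efficient triangulation. Once these side summands are shown to contribute non-positively to the right-hand side of $\chi(S)=\sum n_i\chi(F_i)$, the Thurston-norm comparison above leaves exactly enough "Euler characteristic budget" for one orientable longitudinal summand to realise $\chi(S)$, completing the argument.
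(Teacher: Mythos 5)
Your proposal has a genuine gap, and it sits exactly where the real difficulty of the theorem lies. By \Cref{cor:JS-key}, since $\partial S$ is a single essential curve, the Haken decomposition $S=\sum n_i F_i$ has \emph{exactly one} summand $F$ with non-empty boundary, with $\partial F=\partial S$, and all other summands are closed. (Your multi-summand homological bookkeeping $[\partial S]=\sum n_i[\partial F_i]$ is therefore largely moot; it is also ill-posed for non-orientable summands, whose boundaries only carry $\mathbb{Z}_2$ classes, and the constraint $\sum n_i a_i=0$ never comes into play.) The theorem then reduces to a single question: why is this unique boundary summand $F$ orientable? Your Thurston-norm comparison only ever quantifies over summands that are \emph{already assumed} orientable and longitudinal; if $F$ is non-orientable, that set is empty, the inequalities say nothing, and no amount of showing that the remaining (closed) summands "contribute non-positively" can manufacture an orientable fundamental spanning surface that is not there. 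So the proposal establishes the easy direction (if $F$ is orientable then $\chi(F)=\chi(S)$ and, with a least-weight choice, $F=S$) but leaves the hard case untouched.

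The paper closes this case with an argument you would need to supply: choose $S$ of \emph{least weight} among maximal Euler characteristic orientable normal spanning surfaces, write $S=F+G$ with $G$ closed, and give all patches the orientation induced from $S$. If $F$ were non-orientable there would be a curve $\gamma\subset F\cap G$ at which the induced orientations on the two $F$-patches disagree (and hence, since $S$ is orientable, the orientations on the $G$-patches disagree as well); performing the irregular exchange at $\gamma$ and regular exchanges elsewhere produces an orientable spanning surface of the same Euler characteristic that is not normal, whose normalisation has smaller weight than $S$ --- contradicting least weight. Some mechanism of this kind (a weight-minimising choice plus a cut-and-paste exchange) is indispensable; the Euler-characteristic and boundary-class identities alone cannot detect orientability of the summands.
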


\textbf{Quadrilateral space.} All results up to this point were stated in the context of normal surface theory with standard coordinates. For computations, it is of advantage to be able to work with quadrilateral coordinates only as this makes otherwise currently impossible calculations feasible. In \Cref{sec:quad_space}, we give some extensions of the previous results in this context. See for instance \cite{Burton-computing-2018} for similar results for closed normal surfaces. For definitions and basic properties of working with quadrilateral coordinates only, we refer to \cite{Burton-computing-2018, Tillus-normal-2008, tollefson98-quadspace}. The following is the main result of this paper; whilst the previous results were given for either efficient suitable triangulations or for 0--efficient triangulations, we now need to combine these properties.

\begin{theorem}
\label{thm:Q-algo}
Let $M$ be the exterior of a non-trivial knot $K$ in a closed 3--manifold $N$ with $[K] = 0 \in H_1(N; \mathbb{Z}_2).$ Suppose that $M$ is irreducible and contains no embedded non-separating torus and no embedded Klein bottle. Let $\tri$ be a $0$-efficient suitable triangulation of $M.$ 
Then $\cross(K)=\min (A', B'),$ where
\begin{itemize}
\item $A' = \min \{ \; 1 - \chi(S) \; \mid S \text{ is a non-orientable $Q$--fundamental spanning surface for } K \; \}$
\item $B' = \min \{ \; 2 - \chi(S) \; \mid S \text{ is an orientable $Q$--fundamental spanning surface for } K \; \}$
\end{itemize}
\end{theorem}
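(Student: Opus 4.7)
The plan is to upgrade Theorem \ref{thm:crosscap-suitable} from standard to quadrilateral coordinates via the standard $Q$-reconstruction argument. The easy inequality $\cross(K) \leq \min(A', B')$ is immediate: a $Q$-fundamental spanning surface is a properly embedded spanning surface, and its Euler characteristic together with its orientability provides the usual upper bound on the crosscap number. The content lies in the reverse inequality.

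For this, apply Theorem \ref{thm:crosscap-suitable} to produce a fundamental spanning surface $S$ in standard coordinates that realises $\cross(K)$ via either $A$ or $B$. Decompose its $Q$-vector as $Q(S) = \sum_{i=1}^{k} n_i Q(F_i)$ with each $F_i$ connected and $Q$-fundamental and $n_i \in \mathbb{Z}_{>0}$, and let $T = \sum n_i F_i$ denote the associated normal sum. Since $Q(T) = Q(S)$, the difference $T - S$ lies in the span of vertex-linking normal surfaces; in a $0$-efficient suitable triangulation of our knot exterior these have Euler characteristic zero and empty boundary, so
\[
\chi(S) \;=\; \chi(T) \;=\; \sum_{i=1}^{k} n_i \chi(F_i), \qquad \partial S \;=\; \partial T \;=\; \sum_{i=1}^{k} n_i \, \partial F_i .
\]
In particular $\sum n_i [\partial F_i] = [\meridian]$ in $H_1(\partial M; \mathbb{Z})$.

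The core step is to extract a single summand $F_{i_0}$ that is itself a spanning surface of the correct type. Since $\tri$ is $0$-efficient, no $F_i$ is a normal disc, so every non-empty $\partial F_i$ is essential on $\partial M$. Since $\tri$ is suitable, the meridian is an edge of $\tri_\partial$ and the meridional component of each $[\partial F_i]$ can be read off from boundary edge coordinates. At least one $F_{i_0}$ must contribute non-trivially to the meridional class, so its boundary is a non-zero multiple of $\meridian$; the multiplicity is then forced to be one using connectedness of $F_{i_0}$ together with the essentiality/spanning argument already appearing in the proof of Theorem \ref{thm:crosscap-suitable}. The bookkeeping on orientability and Euler characteristic proceeds exactly as in that proof: in case $A$, non-orientability of $S$ forces at least one non-orientable $F_i$ (using the $\mathbb{Z}_2$-coefficient tracking of orientability under normal sum), and additivity $\chi(S) = \sum n_i \chi(F_i)$ yields a non-orientable $F_{i_0}$ with $1 - \chi(F_{i_0}) \leq 1 - \chi(S)$; in case $B$, the same averaging produces an orientable $F_{i_0}$ with $2 - \chi(F_{i_0}) \leq 2 - \chi(S)$.

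The main obstacle is concentrating all three required properties---connected meridional boundary, correct orientability class, and optimal Euler characteristic---into a single $F_{i_0}$, because the $Q$-decomposition may in principle split these across different summands. Resolving this uses the full strength of the hypotheses: $0$-efficiency rules out normal-disc summands; suitability pins the meridional contribution to boundary edge coordinates; and the topological assumptions (irreducibility, no non-separating tori, no Klein bottles) exclude exactly the closed summands ($S^2$, non-separating $T^2$, Klein bottle) that could otherwise absorb Euler characteristic or flip orientability without contributing to the boundary. With these exclusions in force the argument reduces to the same combinatorial selection step as in the proof of Theorem \ref{thm:crosscap-suitable}, applied now inside the $Q$-decomposition of $S$, and delivers the required $Q$-fundamental spanning surface.
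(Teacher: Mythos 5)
Your proposal has three genuine gaps, and together they skip essentially all of the hard content of the paper's proof. The first is the claim that the difference between $S$ and the sum of its $Q$--fundamental pieces ``lies in the span of vertex-linking normal surfaces'' which ``have Euler characteristic zero and empty boundary.'' In this setting the vertex link is a \emph{disc} $D$ with $\chi(D)=1$ and non-empty (trivial) boundary, and the correct relation is $S + kD = \sum F_i$ for some $k\ge 0$. Consequently neither $\chi(S)=\sum n_i\chi(F_i)$ nor $\partial S = \sum n_i\,\partial F_i$ is automatic; controlling $k$ is precisely the content of the preliminary observations in \Cref{sec:quad_space}. Worse, by \Cref{cor:JS-key} the essential boundary curves of the $F_i$ may occur in \emph{two} slopes --- that of $\partial S$ and its complementary slope --- with complementary pairs Haken-summing to trivial curves that are absorbed into the $kD$ term. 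So a summand $F_{i_0}$ ``contributing to the meridional class'' may have a single essential boundary curve of the complementary slope, strictly larger Euler characteristic than $S$, and fail to be a spanning surface. Excluding this scenario is exactly what the paper's proof of \Cref{thm:Q-algo} does: it produces from the $k_1>0$ case a surface $X'$ of complementary slope with $\chi(X')\ge\chi(F)+\mu(\partial F)$, and then carries out a two-case continued-fraction computation in the Farey tesselation to show $d(\gamma^\perp,\mathcal{F}_e)<\mu(\gamma)$, contradicting the maximality of $\chi(F)$ among spanning surfaces. None of this appears in your outline, and the hypotheses you invoke (no normal-disc summands, suitability, no non-separating tori or Klein bottles) do not by themselves rule it out.

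The second gap is the assertion that ``non-orientability of $S$ forces at least one non-orientable $F_i$'' by ``$\mathbb{Z}_2$-coefficient tracking of orientability under normal sum.'' Orientability is not inherited in this way: a Haken sum of orientable normal surfaces can be non-orientable, and the case where every $Q$--fundamental spanning surface of maximal Euler characteristic is orientable while $K$ nonetheless admits a non-orientable spanning surface of the same Euler characteristic is genuinely possible a priori. Ruling it out is the purpose of \Cref{lem:main_lemma}, whose proof (least-weight representatives, exchange annuli, patch analysis, parity of $|R\cap T|$, and the annulus argument above the separating torus) is the technical heart of the paper; the paper's proof of \Cref{cor:Q-procedure} explicitly re-enters that argument at the corresponding point. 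Finally, note that the paper does not deduce \Cref{thm:Q-algo} by post-processing the output of \Cref{thm:crosscap-suitable}; it works directly with a least-weight normal spanning surface of maximal Euler characteristic and its $Q$--decomposition, first under the auxiliary hypothesis of \Cref{cor:Q-procedure} and then removing that hypothesis via the Farey computation. Your reduction to \Cref{thm:crosscap-suitable} is a reasonable starting point, but as written the ``combinatorial selection step'' you appeal to does not exist without the three missing arguments above.
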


\textbf{Computations.} We use our methods to determine the crosscap numbers of 196 knots with up to 12 crossings for which the crosscap number was previously not known. As a result, crosscap numbers of all knots up to ten crossings are now known. Our algorithms give a \emph{theoretical} method to determine the crosscap numbers of knots with a unified method, where previously different techniques were needed. From a \emph{practical} viewpoint, using our algorithm in standard coordinates allows us to handle triangulations of up to about 26 tetrahedra. Making use of our results in quadrilateral space allows us to push this limit to about 30 tetrahedra. See \Cref{sec:computation} for our computational results.

\textbf{Acknowledgements.} 
The authors thank Josh Howie for comments on an earlier draft.
Jaco is partially supported by the Grayce B. Kerr Foundation.
Research of Rubinstein, Spreer and Tillmann is supported in part under the Australian Research Council's Discovery funding scheme (project number DP190102259). 


\section{The norm of an even slope}
\label{sec:algo}

Throughout this section let $M$ be an orientable, compact, irreducible 3--manifold with $\partial M$ a single, incompressible torus. Suppose $\tri$ is a (singular or semi-simplicial) triangulation of $M$ with the property that the induced triangulation $\tri_\partial$ of $\partial M$ has exactly one vertex. For instance, a 0--efficient triangulation has this property~\cite{jaco03-0-efficiency}. In the following, we choose the single vertex of $\tri$ as the base point for the fundamental group of $M$ and omit it in the notation.

We first show that every connected essential curve $c\subset\partial M$ with $[c] = 0\in H_1 (M,\mathbb{Z}_2)$ bounds a properly embedded connected surface $S \subset M$ with $\partial S = c$ (\Cref{cor:evenslopes}). We then use a result by Jaco and Sedgwick, \Cref{cor:JS-key} or \cite[Corollary 3.8]{Jaco-decision-2003}, to conclude that if $\tri$ is 0--efficient and $S$ is 
of maximal Euler characteristic amongst all connected surfaces with this slope and $\partial$-incompressible, then a surface of equal boundary slope and Euler characteristic must be represented by a fundamental surface in $\tri$. This then leads to \Cref{algo:slopenorm} to compute the smallest norm of a given boundary slope on $\partial M$ with respect to a given framing.

\begin{lemma}
\label{lem:Z2-hom-dual-surface}
Let $\rho \co \pi_1(M)\to \mathbb{Z}_2$ be a homomorphism with the property that $\rho(\meridian) = 1$ for some primitive peripheral element $\meridian \in \text{im}(\pi_1(\partial M)\to \pi_1(M)).$ Then for every primitive peripheral element $\gamma \in \ker(\rho),$ there is a properly embedded surface $S$ in $M$ with $\partial S$ a single boundary curve that satisfies $[\partial S] = \gamma^{\pm 1}$ as free homotopy classes of unoriented loops. In particular, $[\gamma] = 0 \in H_1(M, \mathbb{Z}_2).$
\end{lemma}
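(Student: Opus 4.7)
The plan is first to establish the ``in particular'' claim that $i_*[\gamma] = 0$ in $H_1(M;\mathbb{Z}_2)$ via a short dimension count, and then use this to construct the desired surface. The key ingredient is the classical ``half lives, half dies'' duality applied to the compact orientable 3--manifold $M$ with torus boundary: the kernel of the inclusion--induced map $i_*\co H_1(\partial M;\mathbb{Z}_2)\to H_1(M;\mathbb{Z}_2)$ is one--dimensional, exactly half of $\dim_{\mathbb{Z}_2}H_1(\partial M;\mathbb{Z}_2) = 2$.

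Since $\rho$ factors through an induced homomorphism $\bar\rho\co H_1(M;\mathbb{Z}_2)\to\mathbb{Z}_2$, the composition $\bar\rho\circ i_*\co H_1(\partial M;\mathbb{Z}_2)\to\mathbb{Z}_2$ is surjective (it sends $[\meridian]$ to $\rho(\meridian)=1$), so its kernel is also one--dimensional. Thus $\ker i_* \subseteq \ker(\bar\rho\circ i_*)$ is an equality by dimension count, and $\rho(\gamma)=0$ forces $[\gamma]\in\ker i_*$, which gives the ``in particular'' assertion. Feeding this into the long exact sequence
\[
H_2(M,\partial M;\mathbb{Z}_2)\xrightarrow{\partial}H_1(\partial M;\mathbb{Z}_2)\xrightarrow{i_*}H_1(M;\mathbb{Z}_2)
\]
produces a relative class with $\partial$--image $[\gamma]$, which I would realise by a properly embedded (possibly non--orientable) surface $T_0 \subset M$. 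Standard clean--up---capping off boundary circles bounding discs on the torus, and tubing parallel essential boundary circles through a collar of $\partial M$---reduces to the case where $\partial T_0$ is a single essential simple closed curve of some slope $\alpha$ satisfying $[\alpha] = [\gamma]$ in $H_1(\partial M;\mathbb{Z}_2)$.

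The main obstacle is promoting ``$[\alpha] = [\gamma]$ in $\mathbb{Z}_2$--homology'' to ``$\alpha = \gamma$ as primitive slopes''. Since modifying the slope \emph{a posteriori} would in general require essential annuli in $M$ of prescribed slopes (which need not exist), I would instead build $S$ directly as a transverse preimage. Concretely, pick a cellular classifying map $f\co M \to K(\mathbb{Z}_2,1) = \mathbb{RP}^\infty$ for the cohomology class of $\rho$, and first arrange $f|_{\partial M}$ within its homotopy class so that the transverse preimage of a fixed codimension--one hyperplane is exactly the given curve $\gamma$---this is possible because any primitive simple closed curve realising the nontrivial $\mathbb{Z}_2$--dual class of $\rho|_{\partial M}$ arises as such a preimage for an appropriate representative (for instance, via the explicit model $T^2 \to S^1 \to \mathbb{RP}^\infty$ with $\gamma$ as a level set). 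One then extends this boundary representative to a map on $M$ (which is possible because $\rho$ is already defined on $\pi_1(M)$ and restricts correctly), perturbs on the interior to achieve transversality without altering $f|_{\partial M}$, and takes the preimage of the hyperplane to obtain the desired properly embedded surface $S$ with $\partial S = \gamma$. I expect this boundary--first transverse--preimage step---in particular, rigging $f|_{\partial M}$ to cut out the specific curve while still admitting an extension---to be the most delicate point of the argument.
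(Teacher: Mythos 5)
Your argument is correct, but it takes a genuinely different route from the paper. The paper's proof is combinatorial: it builds a canonical normal surface directly from the one-vertex triangulation by placing a normal corner on each edge whose loop maps to $1$ under $\rho$, and then realises every other primitive element of $\ker\rho$ by layering tetrahedra along edge flips of the boundary triangulation, organised by the Farey tesselation --- each flip either adds a pinched annulus (same slope) or a saddle (moving one step in the dual Farey tree and dropping $\chi$ by $1$). Your proof instead runs through classifying spaces: you first get the ``in particular'' clause from a half-lives-half-dies dimension count (note this effectively proves the paper's \Cref{lem:2-basis} up front, whereas the paper derives that lemma \emph{from} this one), and then you produce the surface as the transverse preimage of a hyperplane under a map $M \to \mathbb{RP}^N$ classifying $\rho$, having first rigged $f|_{\partial M}$ to factor through the fibration $T^2 \to S^1 = \mathbb{RP}^1$ with fibre $\gamma$. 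The delicate step you flag does go through: the extension exists because the homotopy class of $f|_{\partial M}$ is determined by $\rho|_{\pi_1(\partial M)} = \rho \circ i_*$, so the classifying map of $\rho$ already restricts to the right homotopy class on $\partial M$ and the homotopy extension property lets you fix the boundary value on the nose; transversality rel boundary and discarding closed components of the preimage then give a connected properly embedded surface with a single boundary curve of slope $\gamma$. What your approach buys is brevity and independence from the triangulation; what it loses is precisely the machinery the paper needs later --- the saddle/Farey bookkeeping introduced in this proof quantifies how $\chi$ changes as the slope moves through the dual tree, and that is the engine of \Cref{thm:slope norm-algo} and the distance terms in \Cref{thm:crosscap}. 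So your proof establishes the lemma as stated but would not substitute for the paper's proof in the role it plays downstream.
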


The proof of the lemma introduces the way we will use the Farey graph in our later algorithms. 

\begin{proof}
First note that there is $\longitude \in \text{im}(\pi_1(\partial M)\to \pi_1(M))$ with $\rho(\longitude) = 0$ and $\langle \meridian, \longitude\rangle=\text{im}(\pi_1(\partial M)\to \pi_1(M)).$ The primitive peripheral elements in $\ker \rho$ are precisely the elements $\meridian^{2k}\longitude^q,$ where $k \in \mathbb{Z}$ and $\gcd(2k,q)=1.$ Since the boundary of $M$ is incompressible and has abelian fundamental group, we have 
\begin{equation}\label{eq:peripheral}
\text{im}(\pi_1(\partial M)\to \pi_1(M))\cong \pi_1(\partial M) \cong H_1(\partial M, \mathbb{Z}) 
\end{equation}
We therefore identify all groups and freely switch between additive and multiplicative notation for peripheral elements. Since we are only interested in unoriented isotopy classes of primitive elements, we always choose $\meridian^p \longitude^q$ with $q\ge 0.$

We first show that there is a surface for some peripheral element in the kernel. Our 0--efficient triangulation $\tri$ of $M$ has exactly one vertex. Hence every edge is a loop and represents an element of $\pi_1(M)$. This maps to either $0$ or $1$ under $\rho.$ Place a normal corner on an edge if and only if the corresponding element maps to $1.$ As observed in \cite{Jaco-minimal-2009}, this results in a normal surface in $M$ having at most a single triangle or a single quadrilateral in each tetrahedron as shown in \Cref{fig:canonical_surface}. Since $\rho(\meridian) =1,$ this normal surface has a single boundary curve $[\partial S] = \meridian^{2k'} \longitude^{q'}$ for some $q'\ge 0$ and $\gcd(2k',q')=1.$ This single boundary curve meets each boundary triangle in a single normal arc.

    \begin{figure}[h]
    \centerline{\includegraphics[width=\textwidth]{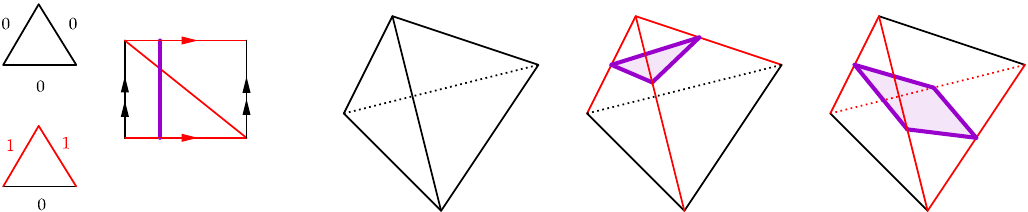}}
    \caption{Labelling of edges and canonical normal curves and surfaces.}
    \label{fig:canonical_surface}
  \end{figure}    

We now show how all other boundary curves $\meridian^{2k}\longitude^q,$ where $q>0,$ $k \in \mathbb{Z}$ and $\gcd(2k,q)=1$ can be obtained by adding saddles to $S.$ To this end, we use the \emph{layering procedure} (see \cite{Jaco-minimal-2009} and \Cref{fig:layerings2}) in conjunction with the Farey tesselation as an organising principle for the set of isotopy classes of triangulations of the torus with a marked point. This is different from the $L$--graph used in \cite{Jaco-minimal-2009}.

We treat the single vertex in the induced triangulation $\tri_\partial$ of $\partial M$ as a marked point, and give the torus $\partial M$ a Euclidean structure with the property that the marked point lifts to the integer lattice via a universal covering map $\mathbb{R}^2 \to \partial M.$ Moreover, up to the action of the Deck group, we may assume that $\meridian$ and $\longitude$ lift to horizontal and vertical lines respectively. The map 
\[ \meridian^p \longitude^q \ \mapsto \frac{p}{q} \]
gives a bijection between the set of isotopy classes of primitive curves and $\mathbb{Q} \cup \{ \infty \}.$ Now the isotopy classes of triangulations with single vertex at the marked point correspond to the orbit of the triple $\big( \frac{1}{0}, \frac{0}{1}, \frac{-1}{1} \big)$ under the action of $\SL(2,\mathbb{Z})$ by M\"obius transformations. 
Identifying $\mathbb{R} \cup \{ \infty \}$ with the boundary of the Poincar\'e disc model of the hyperbolic plane and each triple with an ideal triangle gives the well-known Farey tesselation.

Each triple $\big( \frac{p_0}{q_0},$ $\frac{p_1}{q_1},$ $\frac{p_2}{q_2} \big)$ contains precisely one fraction with even numerator, say $\frac{p_0}{q_0}.$ Note that the corresponding primitive element is in the kernel of $\rho,$ whilst the other two are mapped to $1.$ The normal curve resulting from our above procedure of assigning $0$ or $1$ to each edge, when applied to the corresponding marked triangulation of the boundary, results in a normal curve of slope $\meridian^{p_0}\longitude^{q_0}.$ We call $\frac{p_0}{q_0}$ the \textbf{even slope} of the triple.

 \begin{figure}[h]
    \centerline{\includegraphics[width=0.6\textwidth]{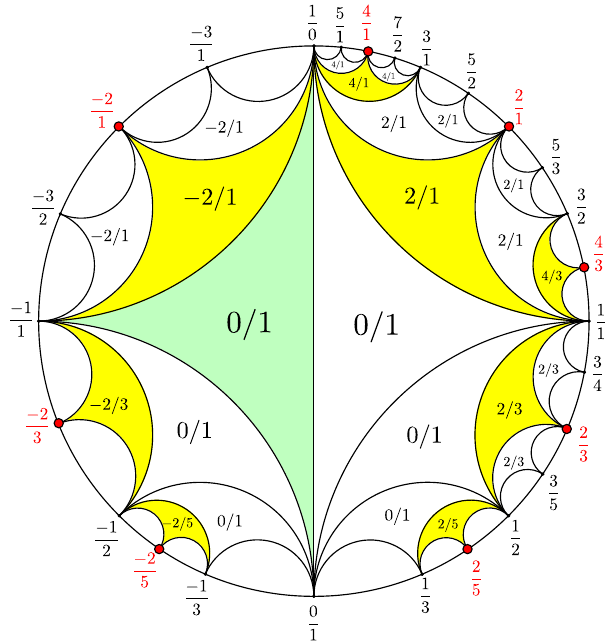}}
    \caption{The Farey tesselation. Each ideal triangle corresponds to an isotopy class of 1-vertex triangulations of the torus. The ideal vertices are labelled with the slopes of the edges, and each ideal triangle is labelled with its unique even slope. Marked are the base triangle in green, and the canonical triangles for the even slopes in yellow. Adjacent triangles differ by an edge flip.}
    \label{fig:Farey}
  \end{figure}  

The dual 1--skeleton to the Farey tesselation is an infinite trivalent tree. Any two triangulations that correspond to triangles sharing an edge in the tesselation are related by an \emph{edge flip}. If one of the triangulations corresponds to $\tri_\partial,$ then, as an operation on the triangulation $\tri$ of $M$, one can \emph{layer} a tetrahedron $\sigma$ on $\tri$ along the edge that is being flipped; see \Cref{fig:layerings2}. This results in a new triangulation $\tri' = \tri \cup \sigma$ of $M$ with the property that the isotopy class of the triangulation of the boundary has changed. Since the trivalent tree is connected, one observes that all isotopy classes of triangulations of $\partial M$ can be realised as the induced triangulations of the boundary of $M$. In particular, every even slope is an edge in some triangulation of the boundary of $M.$ It remains to relate this information to triangulations of $M$ and boundary curves of properly embedded surfaces.

\begin{figure}[h]
    \centerline{\includegraphics[width=0.4\textwidth]{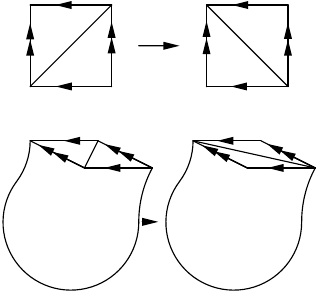}}
    \caption{Layering on a boundary edge adds a tetrahedron to the triangulation of $M.$}
    \label{fig:layerings2}
  \end{figure}   

Each triangulation of $\partial M$ allows three distinct flips, corresponding to the three edges of the ideal triangle in the Farey tesselation. The effect of layering a tetrahedron on the triangulation has the effect of changing the normal surface $S$ to a normal surface $S'$ by adding a quadrilateral. If the corresponding ideal edge in the Farey tesselation has endpoint at the even slope, then the layering adds a pinched annulus to $S$ and maintains its boundary slope. If the ideal edge does not have an endpoint at the even slope, then a punctured and pinched M\"obius strip (which we call a \textbf{saddle}) is added to $S$, hence its Euler characteristic lowered by 1, and its slope changes to the even slope of the adjacent triangle in the Farey tesselation. Topologically, the relationship between the surfaces is that for two of the three layerings, $S$ is obtained from $S'$ by deleting a pinched annulus, whilst for the last, $S$ is obtained from $S'$ by performing a boundary compression. The three possibilities are shown in \Cref{fig:layerings1}.

 \begin{figure}[h]
    \centerline{\includegraphics[width=0.7\textwidth]{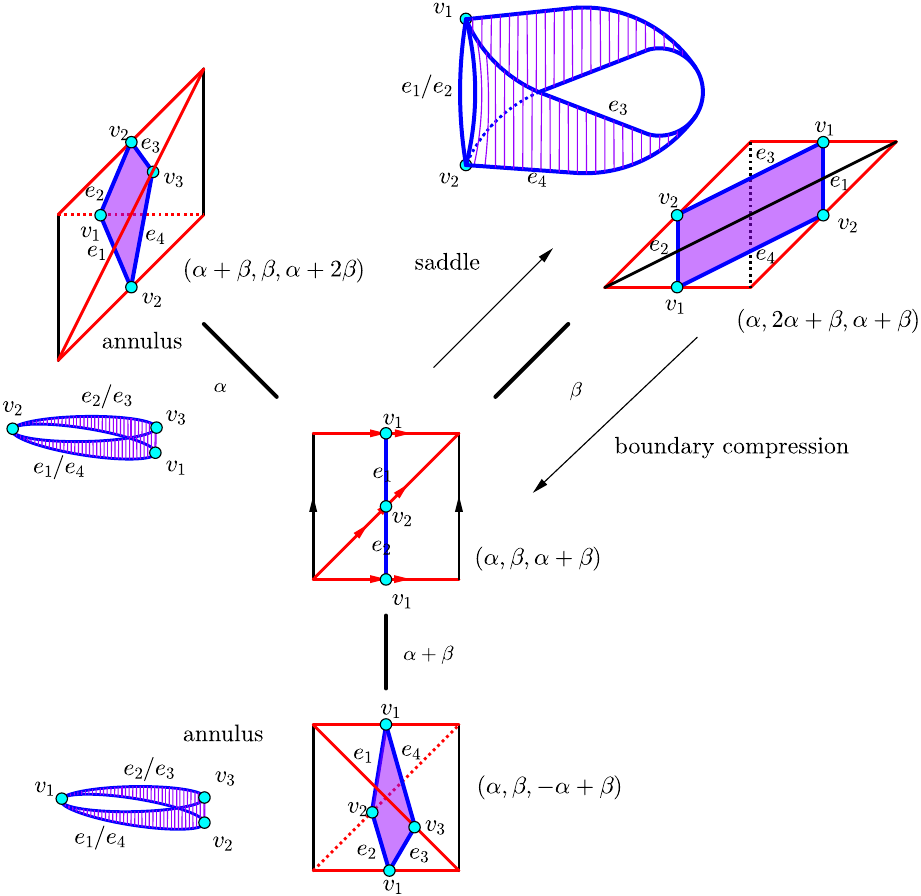}}
    \caption{The three possible layerings on the boundary torus. Two add a pinched annulus to the surface and do not change the boundary slope. The last adds a saddle and changes the boundary slope according to the labelling of the associated ideal triangles in the Farey graph.}
    \label{fig:layerings1}
  \end{figure}   

This completes the proof of the lemma, since starting with $S$, for every even slope, this constructs a properly embedded surface with boundary that slope. This construction and our observations about the Farey tesselation are key to the algorithm given in \Cref{thm:slope norm-algo}.
\end{proof}

The inclusion map induces a homomorphism $H_1(\partial M, \mathbb{Z}_2) \to H_1(M, \mathbb{Z}_2)$ which we precompose with the natural map $H_1(\partial M, \mathbb{Z})\to H_1(\partial M, \mathbb{Z}_2)$ to obtain $\varphi\co H_1(\partial M, \mathbb{Z}) \to H_1(M, \mathbb{Z}_2).$ The next lemma shows that the homomorphism in the hypothesis of \Cref{lem:Z2-hom-dual-surface} exists and is unique. 

\begin{lemma}
\label{lem:2-basis}
We have $H = \text{im} ( \varphi\co H_1(\partial M, \mathbb{Z}) \to H_1(M, \mathbb{Z}_2)) \cong \mathbb{Z}_2.$ 
\end{lemma}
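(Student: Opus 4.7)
The plan is to reduce the computation to determining the image of the inclusion-induced map $i_*\co H_1(\partial M; \mathbb{Z}_2) \to H_1(M; \mathbb{Z}_2)$, and then to show this image has $\mathbb{Z}_2$-dimension exactly one. For the first step, since $H_1(\partial M; \mathbb{Z}) \cong \mathbb{Z}^2$ surjects onto $H_1(\partial M; \mathbb{Z}_2) \cong \mathbb{Z}_2^2$ under mod-$2$ reduction (via universal coefficients, as $H_0(\partial M; \mathbb{Z})$ is free), the image of $\varphi$ coincides with $\text{im}(i_*)$, so it suffices to compute $\dim_{\mathbb{Z}_2} \text{im}(i_*)$.

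For the second step, I would invoke half-lives-half-dies over $\mathbb{Z}_2$. The short version: Poincaré--Lefschetz duality $H_k(M, \partial M; \mathbb{Z}_2) \cong H^{3-k}(M; \mathbb{Z}_2)$, which is valid for any compact 3-manifold since we are over $\mathbb{Z}_2$, combined with universal coefficients over a field, yields the symmetry $\dim_{\mathbb{Z}_2} H_k(M, \partial M; \mathbb{Z}_2) = \dim_{\mathbb{Z}_2} H_{3-k}(M; \mathbb{Z}_2)$. Feeding this into the long exact sequence of $(M, \partial M)$ and doing a short count of dimensions on the stretch $H_2(M) \to H_2(M, \partial M) \to H_1(\partial M) \xrightarrow{i_*} H_1(M) \to H_1(M, \partial M)$ forces $\dim \ker i_* = \dim \text{im}\, i_* = 1$. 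Combining this with the first paragraph gives $\text{im}(\varphi) \cong \mathbb{Z}_2$.

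If a more self-contained proof is preferred, one can use only the classical integer half-lives-half-dies together with a short intersection argument: the integer version supplies a primitive $\alpha \in H_1(\partial M; \mathbb{Z})$ spanning the rank-one kernel of $H_1(\partial M; \mathbb{Z}) \to H_1(M; \mathbb{Z})$. Choose $\beta \in H_1(\partial M; \mathbb{Z})$ with $\alpha \cdot \beta = 1$ on the torus $\partial M$. Then the mod-$2$ reduction $\overline\alpha$ dies in $H_1(M; \mathbb{Z}_2)$, so $\text{im}(\varphi) \subseteq \mathbb{Z}_2 \cdot i_*(\overline\beta)$; and $i_*(\overline\beta) \neq 0$, because otherwise $\overline\beta = \partial F$ for some $\mathbb{Z}_2$-2-chain $F$ in $M$ and $\alpha = \partial G$ for some integer 2-chain $G$ in $M$, whence after a transverse perturbation the 1-manifold $F \cap G$ has boundary in $\partial M$ equal to $\partial F \cap \partial G$, each of whose arcs contributes an even number of endpoints. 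This forces $\overline\alpha \cdot \overline\beta \equiv 0 \pmod 2$, contradicting $\alpha \cdot \beta = 1$.

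The main obstacle in either approach is really just invoking half-lives-half-dies in the correct flavour; the rest is formal bookkeeping or a parity check. Given the brevity of the intersection argument and the fact that the paper has been working in a concrete surface-theoretic setting, I expect the authors to present (a version of) the direct geometric proof.
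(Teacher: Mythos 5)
Your main argument is correct and complete: reducing $\varphi$ to the inclusion-induced map $i_*$ on $\mathbb{Z}_2$-homology (the mod-2 reduction on $H_1(\partial M;\mathbb{Z})$ is indeed surjective) and then applying half-lives-half-dies over $\mathbb{Z}_2$ pins down $\dim_{\mathbb{Z}_2}\text{im}(i_*)=1$ in one stroke; the duality input is valid since Poincar\'e--Lefschetz duality with $\mathbb{Z}_2$ coefficients holds for any compact manifold. This is a genuinely different route from the paper, which offers three proofs, none of them this one. The closest is the second (``algebraic topology'') proof, which also pushes duality through the long exact sequence of $(M,\partial M)$, but is organised as a contradiction from $H=\{0\}$, requires Dehn filling $M$ and a Mayer--Vietoris computation to set the contradiction up, and still has to invoke \Cref{lem:Z2-hom-dual-surface} at the end to exclude rank two. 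Your version needs none of that, since $\dim\ker i_*=\dim\text{im}\, i_*$ gives the rank exactly. What the paper's proofs buy instead is geometric content reused later: the first and third proofs construct embedded surfaces dual to the classes in question, and the third explains how the resulting 2-torsion framing relates to the homological framing of \Cref{def:Homological_framing}, which matters for the crosscap-number arguments.

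Your ``more self-contained'' alternative, however, has a gap at its first step. Integral half-lives-half-dies does not supply a \emph{primitive} class $\alpha\in H_1(\partial M;\mathbb{Z})$ killed by $H_1(\partial M;\mathbb{Z})\to H_1(M;\mathbb{Z})$: it only guarantees that the homological longitude $\longitude_\infty$ maps to a torsion element, say of order $m$, so the integral kernel is generated by $m\longitude_\infty$, which is primitive only when $m=1$. When $m$ is even, the mod-2 reduction of every integrally-dying peripheral class is zero, and your argument produces no nonzero $\overline\alpha$ dying in $H_1(M;\mathbb{Z}_2)$; indeed, whether $\overline{\longitude_\infty}$ dies mod 2 is part of what the lemma asserts, and the paper's remark that one cannot always take $(\meridian_2,\longitude_2)=(\meridian_\infty,\longitude_\infty)$ is exactly this phenomenon --- its third proof splits into cases on the parity of $m$ to deal with it. (The parity/intersection argument in your final step is fine, and is essentially the closing step of the paper's first proof.) So the $\mathbb{Z}_2$ half-lives-half-dies version should stand as your proof, and the alternative should be dropped or repaired along the lines of the paper's case analysis.
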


It follows from the lemma that we may choose a basis $( \meridian_2, \longitude_2)$ of $H_1(\partial M, \mathbb{Z})$ with the property that 
$\varphi(\meridian_2) \neq 0 = \varphi( \longitude_2).$ We say that the basis $( \meridian_2, \longitude_2)$ is a \textbf{2--torsion framing} of $\partial M$ and call $\meridian_2$ a \textbf{2--meridian} and $\longitude_2$ a \textbf{2--longitude}. 

\begin{proof}[First proof (geometric topology).]
First assume that $H = \text{im} ( \varphi\co H_1(\partial M, \mathbb{Z}) \to H_1(M, \mathbb{Z}_2)) =  \{0\}.$
Let $( \meridian, \longitude)$ be any basis of $H_1(\partial M, \mathbb{Z}).$ Choose a sufficiently fine simplicial triangulation of $M$ so that we may choose a simple closed curve $L$ in the 1--skeleton in $\partial M$ that is isotopic to $\longitude.$ Since $\varphi(\longitude)=0,$ there is a 2--chain $C$ in the 2--skeleton with $\partial C = L.$ Since we are working with $\mathbb{Z}_2$ coefficients, $C$ is an assignment of $0$ or $1$ to each 2--simplex in the triangulation. We add a product collar to $\partial M$ and add the annulus $L \times [0,1]$ to $C.$ We let $M$ and $C$ denote the resulting manifold and chain again.

Each 1--simplex in the interior of $M$ meets $C$ in an even number of 2--simplices. Hence, away from the vertices, we can resolve the 2--simplices in pairs to obtain a properly embedded but possibly singular surface $S'$ in $M$ with the property that its singularities are contained in the set of interior vertices of the triangulation of $M$. Now a small regular neighbourhood $N$ of the union of all vertices meets $S'$ in a union of circles. Hence, we replace $S\cap N$ with a union of discs, giving a properly embedded surface $S$ in $M$ with $\partial S = L.$ 

Since $\meridian$ meets $\longitude$ in a single point, the intersection pairing implies that $\varphi(\meridian_\infty) \neq 0 \in H_1(M, \mathbb{Z}_2).$ But this contradicts our hypothesis that $H= \{0\}.$

Hence $H \neq \{0\}.$ It now follows from \Cref{lem:Z2-hom-dual-surface} that the rank of $H$ cannot be two. Hence, it must be one.
\end{proof}

\begin{proof}[Second proof (algebraic topology).]
First assume that $H = \text{im} ( \varphi\co H_1(\partial M, \mathbb{Z}) \to H_1(M, \mathbb{Z}_2)) =  \{0\}.$ 
Glue a solid torus to $M$, resulting in a closed 3--manifold $N.$ Since $N$ is closed, the Euler characteristic of $N$ is zero and we have $b_1(N, \mathbb{Z}_2) = b_2(N, \mathbb{Z}_2).$ Consider the following part of the Mayer-Vietoris long exact sequence in homology with $\mathbb{Z}_2$ coefficients:
\[ \ldots \to H_1(\partial M, \mathbb{Z}_2) \to H_1(M, \mathbb{Z}_2) \oplus H_1(S^1\times D^2, \mathbb{Z}_2) \to H_1(N, \mathbb{Z}_2) \to \ldots.\]
Since $H =  \{0\},$ we have $H_1(N, \mathbb{Z}_2) \cong H_1(M, \mathbb{Z}_2).$ 
Now $H =  \{0\}$ also implies that there is a relative $\mathbb{Z}_2$--chain in $M$ that attaches to the meridian disc of the solid torus. The intersection pairing with the core curve of the solid torus implies that the rank of $H_2(N, \mathbb{Z}_2)$ is one larger than the rank of $H_2(M, \mathbb{Z}_2).$
 In particular, $b_1(M, \mathbb{Z}_2)= b_2(M, \mathbb{Z}_2)+1.$

Now consider the long exact sequence for the pair $(M, \partial M)$ with $\mathbb{Z}_2$--coefficients. We obtain
\[ 0 \to H_2(M, \mathbb{Z}_2) \to H_2(M, \partial M, \mathbb{Z}_2) \to H_1(\partial M, \mathbb{Z}_2) \to 0\]
Using Poincar\'e-Lefschetz duality and the universal coefficient theorem, we have
\[ H_1(M, \mathbb{Z}_2) \cong H^1(M, \mathbb{Z}_2) \cong H_2(M, \partial M, \mathbb{Z}_2) \cong H_2(M, \mathbb{Z}_2) \oplus H_1(\partial M, \mathbb{Z}_2)\]
This gives $b_1(M, \mathbb{Z}_2) = b_2(M, \mathbb{Z}_2)+2,$ contradicting the calculation in the first paragraph.

Hence $H \neq \{0\}.$ It now follows from \Cref{lem:Z2-hom-dual-surface} that the rank of $H$ is one.
\end{proof}

\begin{remark}
\label{def:Homological_framing}
The standard half-lives half-dies argument \cite[Lemma 3.5]{HatcherNotes} implies for homology with rational coefficients that one may choose a basis $\langle \meridian_\infty, \longitude_\infty\rangle = H_1(\partial M, \mathbb{Z})$ with the property that $\meridian_\infty$ maps to an element of infinite order whilst $\longitude_\infty$ maps to an element of finite order under the inclusion map to $H_1(M, \mathbb{Z}).$ In particular, $\longitude_\infty$ is uniquely determined up to sign, whilst $\meridian_\infty$ is only well-defined up to sign and a power of $\longitude_\infty.$ We call $\longitude_\infty$ \textbf{the homological longitude} and $\meridian_\infty$ \textbf{a homological meridian}. 
\end{remark}

It is not necessarily the case that one may choose $( \meridian_2, \longitude_2) = ( \meridian_\infty, \longitude_\infty).$ To make this statement less mysterious, we give a third proof of the lemma that does not appeal to a contradiction.

\begin{proof}[Third proof (geometric topology).]
Suppose the order of  $\longitude_\infty$ is $m$ in $H_1(M, \mathbb{Z}).$ The significance of the order is that $\meridian_\infty$ maps to an element of the form $a^mh \in H_1(M, \mathbb{Z}),$ where $a$ generates a free $\mathbb{Z}$ summand and $h$ is a torsion element. A geometric interpretation of this algebraic relationship arises from a construction due to Stallings~\cite{Stallings-fibering-1962} that produces a properly embedded, connected  oriented surface $S$ in $M$ with $[\partial S] = \longitude^{\pm m}_\infty$ dual to the action of $\pi_1(M)$ on $\mathbb{R}$ associated with a homomorphism $\pi_1(M) \to H_1(M, \mathbb{Z}) \to \mathbb{Z}$ with $a \mapsto 1.$
Moreover, $S$ has exactly $m$ boundary components, which implies that all have the same induced orientation, and the meridian has algebraic intersection number $\pm m$ with the surface.

Note that if one connects any two adjacent boundary components with a boundary parallel annulus, then one obtains a non-orientable surface. In particular, if $m$ is odd, then one may connect pairs of boundary components with annuli to obtain a properly embedded non-orientable surface $S$ in $M$ with a single boundary curve $[\partial S] = \longitude^{\pm 1}_\infty.$ In particular, $\meridian_\infty$ maps to a generator of the image and $\longitude_\infty$ is contained in the kernel of $\varphi\co H_1(\partial M, \mathbb{Z}) \to H_1(M, \mathbb{Z}_2).$ Hence we may choose $\meridian_2 = \meridian_\infty$ and $\longitude_2 = \longitude_\infty.$ 

If $m$ is even, the same construction of connecting boundary components in pairs results in a closed non-orientable surface in $M.$ Since $m$ is the order of $\longitude_\infty$ in $H_1(M, \mathbb{Z}),$ there are two cases, depending on whether $\varphi( \longitude_\infty)$ maps to zero or not.

First assume that $m=2$ and $\varphi( \longitude_\infty)$ is the generator of a $\mathbb{Z}_2$--summand. Then there is a homomorphism $\rho\co \pi_1(M) \to \mathbb{Z}_2$ with  $\rho(\longitude_\infty)= 1.$ Then
$\rho(\gamma)= 0,$ where either $\gamma = \meridian_\infty$ or $\gamma = \meridian_\infty\longitude_\infty.$
Now $[\gamma] = 0 \in H_1(M, \mathbb{Z}_2)$ according to \Cref{lem:Z2-hom-dual-surface}. It follows that we may choose $\meridian_2 = \longitude_\infty$ and $\longitude_2 = \gamma.$

The remaining case is that $\varphi( \longitude_\infty)=0.$ In this case, the construction from the first proof of the surface $S$ with $[\longitude_\infty] = [\partial S] = 0 \in H_1(M, \mathbb{Z}_2)$ can be applied, and we let $\longitude_2 = \longitude_\infty.$ Since $\meridian_\infty$ meets $\longitude_\infty$ in a single point, we have $\varphi(\meridian_\infty) \neq 0 \in H_1(M, \mathbb{Z}_2)$. So we let $\meridian_2 = \meridian_\infty.$
\end{proof}

We use the following terminology and notation for \textbf{unoriented isotopy classes} of non-trivial simple closed loops on the boundary torus. 
Let $\alpha \in \text{im}(\pi_1(\partial M)\to \pi_1(M)).$ Recalling the identification \Cref{eq:peripheral},
consider $[\alpha] \in H_1(\partial M, \mathbb{Z}).$
If $[\alpha] = \meridian^p_2 \longitude^q_2$ is a non-trivial primitive class in $H_1(\partial M, \mathbb{Z})$ with $q\ge 0,$ then we call $\alpha$ a \textbf{slope}. We may therefore identify a slope with an unoriented isotopy class of a non-trivial simple closed loop on the torus. 
Conversely, each such unoriented isotopy class arises from a unique slope.
A slope $\alpha$ is an \textbf{even slope} if $\alpha$ maps to zero in $H_1(M, \mathbb{Z}_2).$ We remark that this is consistent with the terminology concerning even slopes in the Farey construction in \Cref{lem:Z2-hom-dual-surface}, and that the notion of an even slope is independent of the chosen 2--torsion framing.

Given a surface $S$ with connected boundary, we give $\partial S$ the unique orientation that makes $[\partial S]\in \pi_1(M)$ a slope.
Now \Cref{lem:Z2-hom-dual-surface,lem:2-basis} imply:

\begin{corollary}
\label{cor:evenslopes}
Let $\alpha \in \text{im}(\pi_1(\partial M)\to \pi_1(M))$ be a slope.
There is a properly embedded surface $S$ in $M$ with $[\partial S] = \alpha$ if and only if $\alpha$ is an even slope.
\end{corollary}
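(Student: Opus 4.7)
The corollary is an immediate packaging of the two preceding lemmas, and my plan is simply to separate the two implications and recognise that each direction is handled by one of them.

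For the forward direction, suppose $S$ is a properly embedded surface in $M$ with $[\partial S] = \alpha.$ Viewing $S$ as a $\mathbb{Z}_2$--chain in $M,$ its boundary vanishes in $H_1(M, \mathbb{Z}_2).$ Thus $\varphi(\alpha) = 0,$ i.e.\ $\alpha$ is an even slope by definition. This is the easy half, and no real work is required.

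For the reverse direction, suppose $\alpha$ is an even slope. By \Cref{lem:2-basis}, $H = \text{im}(\varphi) \cong \mathbb{Z}_2,$ so composing the unique surjection $H_1(M, \mathbb{Z}_2) \to H$ with the abelianisation map $\pi_1(M) \to H_1(M, \mathbb{Z}_2)$ yields a homomorphism $\rho\co \pi_1(M) \to \mathbb{Z}_2$ whose kernel on peripheral elements is exactly $\varphi^{-1}(0).$ Choosing a $2$--torsion framing $(\meridian_2, \longitude_2),$ we have $\rho(\meridian_2) = 1,$ so $\rho$ satisfies the hypothesis of \Cref{lem:Z2-hom-dual-surface}. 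Since $\alpha$ is even, $\alpha \in \ker \rho,$ and $\alpha$ is primitive peripheral by the definition of a slope. Applying \Cref{lem:Z2-hom-dual-surface} to $\gamma = \alpha$ produces a properly embedded surface $S$ in $M$ with $[\partial S] = \alpha^{\pm 1},$ which as an unoriented isotopy class is the same slope $\alpha.$

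The only minor care required is the conventions about unoriented vs.\ oriented boundaries and the normalisation $q \ge 0,$ and these are already handled by the discussion immediately preceding the corollary statement (the paragraph where $\partial S$ is given the orientation making $[\partial S]$ a slope). There is no substantive obstacle: all the geometry lives in \Cref{lem:Z2-hom-dual-surface} (the Farey/layering construction) and all the algebra lives in \Cref{lem:2-basis}; the corollary is essentially just their juxtaposition.
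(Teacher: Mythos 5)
Your proposal is correct and follows the paper's own (implicit) argument: the paper offers no separate proof, simply asserting that \Cref{lem:Z2-hom-dual-surface,lem:2-basis} imply the corollary, and your two directions — the trivial $\mathbb{Z}_2$--homological observation for "only if" and the application of \Cref{lem:Z2-hom-dual-surface} via the homomorphism furnished by \Cref{lem:2-basis} for "if" — are exactly the intended content. The only nitpick is the phrase "the unique surjection $H_1(M,\mathbb{Z}_2)\to H$" (such a projection need not be unique), but any functional non-vanishing on $H$ works, so nothing is affected.
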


The \textbf{norm} of an even slope $\alpha$ is defined as:
\[ || \; \alpha \; || = \min \{\ - \chi(S) \ \mid \ \text{$S$ is a properly embedded surface in $M$ with }  [\partial S] = \alpha \} \]
Since $\partial M$ is incompressible, we have $|| \; \alpha \; || \ge 0$ for all even slopes. We say that $S$ is \textbf{taut} for $\alpha$ if $S$ is connected, $[\partial S] = \alpha$ and $|| \; \alpha \; || = - \chi(S)$.

\begin{remark}
We emphasise here that the slope norm is defined on isotopy classes of simple closed connected curves on the boundary of $M$, and that it gives the maximal Euler characteristic of a surface with connected boundary of this slope. A related approach, which we do not take, would be to also consider surfaces with multiple boundary components.
\end{remark}

The \textbf{slope norm} of $M$ is defined as:
\[ || \; M \; || = \min \{\ || \; \alpha \; || \ \mid \ \text{$\alpha$ is an even slope }  \} \]
Each even slope $\alpha$ satisfying $|| \; \alpha \; || =  || \; M \; ||$ is called a \textbf{minimising slope} for $M.$

The key results that will be used from Jaco and Sedgwick's work are~\cite[Proposition 3.7 and Corollary 3.8]{Jaco-decision-2003}. We require a few definitions in order to state them. We refer readers unfamiliar with the concepts in normal surface theory that are mentioned here to either of \cite{jaco03-0-efficiency,Jaco-decision-2003}.

The boundary curves of a normal surface $S$ form a collection of normal curves on $\partial M.$ It is shown in \cite{Jaco-decision-2003} that two normal curves are normally isotopic with respect to $\tri_\partial$ if and only if they are isotopic on $\partial M$, and that a normal curve is trivial if and only if it is vertex linking. It follows that we can identify a slope with the normal isotopy class of a non-trivial normal curve on the torus. For a collection $C$ of pairwise disjoint normal curves on the torus containing at least one non-trivial component, the slope of $C$ is the isotopy class of a non-trivial component. Two slopes are \textbf{complementary} if their Haken sum is a collection of trivial curves. 

Jaco and Sedgwick show that if two normal surfaces are compatible and meet $\partial M$ in non-trivial slopes, then these slopes are either equal or complementary~\cite[Proposition 3.7]{Jaco-decision-2003}. 
This allows the possibility that some or all boundary curves of a normal surface are trivial.
The projective solution space of normal surface theory is denoted by $\mathcal{P}(\tri).$  Given a normal surface $S$, its \textbf{carrier} is the unique minimal face $\mathcal{C}(S)\subset \mathcal{P}(\tri)$ that contains the projectivised normal coordinate of $S.$ 

\begin{proposition}[Jaco-Sedgwick, Corollary 3.8~\cite{Jaco-decision-2003}]
\label{cor:JS-key}
Let $M$ be an orientable, compact, connected 3--manifold with $\partial M$ a single torus. 
Suppose $\tri$ is a triangulation of $M$ that restricts to a one-vertex triangulation of $\partial M.$ Suppose $S$ is a normal surface and $\partial S \neq 0.$ Assume also that $\partial S$ contains at least one essential curve. There are at most two slopes (complementary ones) for all surfaces in the carrier $\mathcal{C}(S)\subset \mathcal{P}(\tri).$
\end{proposition}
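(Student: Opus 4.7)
The plan is to leverage Jaco--Sedgwick's Proposition 3.7 (the statement immediately preceding the corollary in the excerpt) together with the face structure of the projective solution space. The key input is that any two normal surfaces whose projectivised normal coordinates lie in a common face of $\mathcal{P}(\tri)$ have non-conflicting quadrilateral types, so they form a compatible pair and therefore satisfy the hypothesis of Proposition 3.7. Hence, in the first step I would recall (in a short paragraph) that any two surfaces $T_1, T_2 \in \mathcal{C}(S)$ are compatible: by definition $\mathcal{C}(S)$ is the minimal face of $\mathcal{P}(\tri)$ containing the projectivised coordinate of $S$, and faces of $\mathcal{P}(\tri)$ are cut out precisely by the condition that at most one quadrilateral type is used in each tetrahedron, so every $T \in \mathcal{C}(S)$ uses only quadrilateral types appearing in $S$.

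Next, I would observe that a single normal surface $T$ with $\partial T \neq 0$ and at least one essential boundary component determines a well-defined slope: the essential components of $\partial T$ are pairwise disjoint essential simple closed curves on the torus $\partial M$, hence are mutually isotopic and share one slope. The vertex-linking (trivial) components of $\partial T$ contribute no slope. In particular, $S$ itself yields a distinguished slope $\alpha$, and the corollary amounts to showing that every other surface $T \in \mathcal{C}(S)$ whose boundary contains an essential component has slope either $\alpha$ or its complementary slope $\alpha^*$.

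To conclude, I would apply Proposition 3.7 to the compatible pair $(S, T)$. Since both surfaces carry at least one essential boundary curve, their slopes are either equal or complementary. Applied to every such $T \in \mathcal{C}(S)$, this yields at most two slopes over the entire carrier, namely $\alpha$ and $\alpha^*$, which are complementary by construction.

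The main obstacle will be the first step: making the claim that any two surfaces in $\mathcal{C}(S)$ are compatible rigorous. The cleanest route is to identify $\mathcal{C}(S)$ with the face of $\mathcal{P}(\tri)$ determined by the quadrilateral support of $S$, and then to observe that two surfaces sharing this support cannot use conflicting quadrilateral types in any tetrahedron. A minor auxiliary concern is to handle surfaces in $\mathcal{C}(S)$ whose boundary is empty or consists entirely of trivial curves; such surfaces simply contribute no slope to the count and can be ignored without affecting the argument.
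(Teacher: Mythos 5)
This proposition is quoted verbatim from Jaco--Sedgwick and is not proved in the present paper; your derivation of it from their Proposition~3.7 --- using that every surface carried by the minimal face $\mathcal{C}(S)$ has normal-coordinate support contained in that of $S$, hence is compatible with $S$, so that its essential boundary slope must be equal or complementary to that of $S$ --- is exactly the intended argument and is correct. The one imprecision is your claim that the faces of $\mathcal{P}(\tri)$ are ``cut out by the condition that at most one quadrilateral type is used per tetrahedron''; faces are cut out by the vanishing of subsets of coordinates (the quad condition is the admissibility constraint, not the face structure), and the fact you actually need is simply that the minimal face containing $S$ consists of those admissible points whose support is contained in the support of $S$, which immediately gives pairwise compatibility of all surfaces it carries.
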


\begin{lemma}
\label{lem:taut_fundamental}
Let $M$ be an orientable, compact, irreducible 3--manifold with $\partial M$ a single, incompressible torus. 
Let $\tri$ be a 0--efficient triangulation of $M$ and let $\alpha\in \text{im}(\pi_1(\partial M)\to \pi_1(M))$ be an even slope. If there is an incompressible and $\partial$--incompressible surface $S$ in $M$ with $[\partial S] = \alpha$ and $\chi(S) = - || \; \alpha \; ||,$ then there is a fundamental surface $F$ of $\tri$ with $[\partial F] = \alpha$ and $\chi(F) = - || \; \alpha \; ||.$
\end{lemma}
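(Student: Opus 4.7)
The plan is to isotope $S$ into normal form, decompose it as a Haken sum of fundamental surfaces in its carrier, and recover $F$ as one of the summands, using \Cref{cor:JS-key} together with $0$--efficiency.

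Since $M$ is irreducible, $\partial M$ is incompressible, and $S$ is both incompressible and $\partial$--incompressible, standard normal surface theory lets us isotope $S$ to a normal representative with the same Euler characteristic and boundary slope; we assume from here on that $S$ is normal. Write $S=\sum_i n_i F_i$ as a Haken sum with $n_i\ge 1$ and the $F_i$ pairwise distinct fundamental surfaces in the carrier $\mathcal{C}(S)\subset\mathcal{P}(\tri)$. Apply \Cref{cor:JS-key}: every $F_i$ has non-trivial boundary only of slope $\alpha$ or of a slope $\alpha'$ complementary to $\alpha$. The $0$--efficiency of $\tri$ rules out normal $2$--spheres and essential normal discs, orientability of $M$ rules out normal projective planes, and incompressibility of $\partial M$ rules out compressing discs, so $\chi(F_i)\le 0$ for every non-vertex-linking $F_i$; vertex-linking summands contribute trivially to both $\partial S$ and $\chi(S)$ and may be discarded.

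From the additivity $-\chi(S)=|| \; \alpha \; ||=\sum_i n_i(-\chi(F_i))$ with every $-\chi(F_i)\ge 0$, we obtain $-\chi(F_i)\le || \; \alpha \; ||$ for every $i$. The argument is now complete once we exhibit a summand $F=F_{i_0}$ with $n_{i_0}=1$ whose boundary consists of a single curve of slope $\alpha$: for then $[\partial F]=\alpha$ gives $-\chi(F)\ge || \; \alpha \; ||$ by definition of the slope norm, and together with the previous bound this yields $\chi(F)=-|| \; \alpha \; ||$. To produce such an $F$, I would analyse the boundary Haken sum $\partial S=\sum_i n_i\,\partial F_i$: because Haken sum is additive on $H_1(\partial M;\mathbb{Z})$, the defining property of complementary slopes forces $[\alpha']=-[\alpha]$, so $\alpha$ and $\alpha'$ coincide as unoriented slopes. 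Hence every non-trivial boundary component of any $F_i$ is parallel to $\partial S$; writing $b_i$ for the number of such components in $F_i$, the identity $\sum_i n_i b_i=1$ must hold because $\partial S$ has exactly one non-trivial boundary component on $\partial M$. This isolates a unique index $i_0$ with $n_{i_0}=b_{i_0}=1$, and $F=F_{i_0}$ is the desired surface.

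The main obstacle I anticipate is the last step: translating the "complementary" language of \Cref{cor:JS-key} into the assertion that every non-trivial boundary component of any $F_i$ is parallel to $\partial S$, and carefully verifying the additivity of the slope-$\alpha$ multiplicity in the boundary Haken sum (in particular, handling the case where an individual $F_i$ has more than one non-trivial boundary component). Once this is in place, the proof closes with the quantitative chain of inequalities above.
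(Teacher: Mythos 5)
Your overall strategy --- normalise $S$, decompose it into fundamental summands, invoke \Cref{cor:JS-key} together with $0$--efficiency, and close with additivity of Euler characteristic against the definition of the norm --- is the same as the paper's, and the quantitative endgame ($-\chi(F_i)\le ||\alpha||$ from additivity versus $-\chi(F)\ge ||\alpha||$ from the definition of the norm) is fine. The gap is exactly where you anticipated it, and the argument you sketch for that step is false. Two complementary slopes do \emph{not} coincide as unoriented slopes: they are genuinely distinct isotopy classes with nonzero geometric intersection number whose \emph{regular} Haken resolution happens to consist entirely of vertex-linking curves. The Haken sum of normal curves is additive on normal coordinates, not on $H_1(\partial M;\mathbb{Z})$. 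For instance, on the two-triangle torus the curves with normal coordinates $(0,1,0)$ and $(1,0,1)$ have oriented edge weights $(1,0)$ and $(1,2)$ --- distinct slopes meeting twice --- yet their coordinate sum is $(1,1,1)$, a single trivial curve; since no choice of signs makes $\pm(1,0)\pm(1,2)$ vanish, the regular resolution is not homologically additive and your deduction $[\alpha']=-[\alpha]$ fails. Consequently the count $\sum_i n_i b_i=1$ is unjustified: if summands with boundary of the complementary slope were present, essential components would cancel into trivial curves and the multiplicity of slope-$\alpha$ components would not be additive.

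The correct route (the paper's) argues at the level of the normal coordinate of $\partial S$: it is a single essential normal curve, so its coordinate has a zero entry and contains no trivial component. A vertex-linking disc summand, a summand with a trivial boundary curve, or a pair of summands carrying the two complementary essential slopes would each force a trivial curve into $\partial S$; two summands with essential boundary of the same slope would force $\partial S$ to be disconnected. Hence exactly one summand $F$ has non-empty boundary, and $\partial F=\partial S$ is a single curve of slope $\alpha$; your closing inequalities then apply verbatim. (A related small slip: a vertex-linking disc does not ``contribute trivially'' to $\chi$ or to $\partial S$ --- it adds $1$ to the Euler characteristic and a trivial curve to the boundary --- rather, it simply cannot occur as a summand here.)
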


\begin{proof}
Let $S$ be an incompressible and $\partial$--incompressible surface $S$ with $[\partial S] = \alpha$ and $\chi(S) = - || \; \alpha \; ||.$ Since $M$ is irreducible and has incompressible boundary, we can isotope $S$ to be a normal surface. If $S$ is not fundamental, then $S$ is a sum of fundamental surfaces. Now $S$ has boundary a single curve. It follows from \Cref{cor:JS-key} that there is only a single summand, $F$, with non-empty boundary and all other summands are closed normal surfaces. To see this note that otherwise $S$ would have disconnected boundary or a trivial curve in the boundary. Whence $[\partial F] = [\partial S] = \alpha.$ Since $\chi(S) = - || \; \alpha \; ||,$ we have $\chi(F) \le \chi(S).$ Since Euler characteristic is additive under Haken sums and the triangulation is $0$--efficient, this forces $\chi(F) = \chi(S).$
\end{proof}

\begin{corollary}
Let $M$ be an orientable, compact, irreducible 3--manifold with $\partial M$ a single, incompressible torus. 
Let $\tri$ be a 0--efficient triangulation of $M$. Then $\alpha$ is a minimising slope for $M$ if and only if there is a 
fundamental surface $F$ of $\tri$ with $[\partial F] = \alpha$ and $\chi(F) = - || \; M \; ||.$
\end{corollary}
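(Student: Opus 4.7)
The plan is to handle the two implications separately, the reverse being essentially definitional and the forward reducing to \Cref{lem:taut_fundamental}. For the reverse direction, if $F$ is a fundamental surface with $[\partial F] = \alpha$ and $\chi(F) = -|| \; M \; ||$, then by definition of $|| \; \alpha \; ||$ we have $|| \; \alpha \; || \le -\chi(F) = || \; M \; ||$, while by definition of $|| \; M \; ||$ we have $|| \; M \; || \le || \; \alpha \; ||$, so $|| \; \alpha \; || = || \; M \; ||$ and $\alpha$ is a minimising slope.

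For the forward direction, assume $\alpha$ is a minimising slope and let $S$ be a connected properly embedded surface realising $[\partial S] = \alpha$ and $-\chi(S) = || \; \alpha \; || = || \; M \; ||$. The intention is to invoke \Cref{lem:taut_fundamental}, whose hypothesis requires $S$ to be incompressible and $\partial$-incompressible. The core task is therefore to show that $S$ can be chosen to enjoy these additional properties without sacrificing Euler characteristic.

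The main obstacle is ruling out compression and $\partial$-compression. If $S$ were compressible, compressing along a disc would increase $\chi$ by $2$ and preserve $\partial S$; after using irreducibility of $M$ to discard any sphere components (which bound balls) and noting that all remaining closed components have non-positive Euler characteristic, the component containing $\partial S$ would be a connected surface with boundary $\alpha$ and strictly larger Euler characteristic than $S$, contradicting minimality of $|| \; \alpha \; ||$. If $S$ were $\partial$-compressible, a $\partial$-compression would produce a surface $S'$ with $\chi(S') = \chi(S) + 1$; since $[\partial S'] = [\partial S] = \alpha \neq 0$ in $H_1(\partial M, \mathbb{Z})$, at least one boundary component of $S'$ is essential, and any two disjoint essential curves on the torus $\partial M$ share a common slope $\alpha'$. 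Discarding trivial boundary components, joining parallel essential components by boundary-parallel annuli, and discarding sphere components yields a connected surface with a single essential boundary curve of slope $\alpha'$ and $-\chi$ strictly less than $|| \; M \; ||$, contradicting minimality of $|| \; M \; ||$. Note that we genuinely need the minimality of the slope norm here, and not merely that of $|| \; \alpha \; ||$, because $\partial$-compression may change the slope.

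Once $S$ is upgraded to be incompressible and $\partial$-incompressible, \Cref{lem:taut_fundamental} immediately delivers a fundamental surface $F$ with $[\partial F] = \alpha$ and $\chi(F) = -|| \; M \; ||$, completing the proof.
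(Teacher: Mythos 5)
Your proposal is correct and follows essentially the same route as the paper: the reverse direction is definitional, and the forward direction uses minimality of $|| \; M \; ||$ over \emph{all} even slopes to conclude that a taut surface for a minimising slope is incompressible and $\partial$--incompressible, after which \Cref{lem:taut_fundamental} applies. You simply spell out in more detail the compression and $\partial$--compression arguments that the paper compresses into a single sentence, correctly noting that the $\partial$--compression case needs minimality over all slopes since the slope may change.
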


\begin{proof}
Suppose $\alpha$ is a minimising slope for $M.$ Then there is a surface $S$ in $M$ with $[\partial S] = \alpha$ and $\chi(S) = - || \; M \; ||.$ Since $|| \; M \; ||$ is minimal, there is no surface in $M$ with a single boundary component and larger Euler characteristics than $S.$ Hence $S$ is incompressible and $\partial$--incompressible. The result therefore follows from \Cref{lem:taut_fundamental}.
\end{proof}

The above corollary gives an algorithm to compute $ || \; M \; ||$ and the set of all minimising slopes. This can be improved to compute the norm of every even slope:

\begin{theorem}
\label{thm:slope norm-algo}
Let $M$ be an orientable, compact, irreducible 3--manifold with $\partial M$ a single, incompressible torus.
Suppose $\tri$ is a $0$-efficient triangulation of $M$, and $(\meridian_2,\longitude_2)$ a $2$-torsion framing of $\partial M.$

There is an algorithm that, upon input $\tri$ with a $2$-torsion framing and an even slope, computes the norm of this slope.
\end{theorem}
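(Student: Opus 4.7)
The algorithm proceeds as follows. Enumerate the finitely many fundamental normal surfaces of $\tri$, retain those with a single essential boundary curve---whose slope is automatically even by \Cref{cor:evenslopes}---and for each such $F$ record the Euler characteristic $\chi(F)$ and the slope $\beta_F$ expressed with respect to the 2-torsion framing $(\meridian_2,\longitude_2)$. On input $\alpha$, return
\[ \min_F\bigl(-\chi(F)+d(\beta_F,\alpha)\bigr), \]
where $d(\beta_F,\alpha)$ is the Farey-tree distance between $\beta_F$ and $\alpha$, i.e., the minimum number of \emph{saddle} layerings (the slope-changing layerings of the proof of \Cref{lem:Z2-hom-dual-surface}) required to pass from $\beta_F$ to $\alpha$. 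This distance is computable directly from the integer coordinates of the two slopes by the classical edge-flip (continued-fraction) algorithm. The upper bound $||\,\alpha\,|| \le \min_F(-\chi(F)+d(\beta_F,\alpha))$ is immediate from that same layering construction: starting with $F$ and following a shortest saddle-path from $\beta_F$ to $\alpha$, each step layers a tetrahedron and adds a saddle, yielding a properly embedded surface of slope $\alpha$ with Euler characteristic $\chi(F)-d(\beta_F,\alpha)$.

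For the lower bound, fix a taut surface $S$ for $\alpha$ and perform a maximal sequence of interior compressions and $\partial$-compressions to reach an incompressible and $\partial$-incompressible surface $S^*$ of some slope $\alpha^*$. Each interior compression raises $\chi$ by $2$ and preserves the slope; each non-trivial $\partial$-compression raises $\chi$ by $1$ and shifts the slope by at most one saddle step in the Farey tree. Writing $p$ for the number of interior compressions and $q$ for the number of $\partial$-compressions, $\chi(S^*)-\chi(S)=2p+q\ge d(\alpha,\alpha^*)$. Combining with the upper bound applied to $\alpha^*$ gives
\[ ||\,\alpha\,|| \le ||\,\alpha^*\,|| + d(\alpha^*,\alpha) \le -\chi(S^*)+d(\alpha,\alpha^*) \le -\chi(S) = ||\,\alpha\,||, \]
which forces $p=0$, $q=d(\alpha,\alpha^*)$, and $-\chi(S^*)=||\,\alpha^*\,||$; in particular, $S^*$ is taut for $\alpha^*$. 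Applying \Cref{lem:taut_fundamental} to $S^*$ produces a fundamental surface $F$ with $\beta_F=\alpha^*$ and $\chi(F)=\chi(S^*)$, and then $-\chi(F)+d(\beta_F,\alpha)=-\chi(S^*)+d(\alpha^*,\alpha)=-\chi(S)=||\,\alpha\,||$. Thus the algorithm's output is at most $||\,\alpha\,||$, matching the upper bound.

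The main obstacle is establishing the geometric claim that each non-trivial $\partial$-compression of a connected-boundary surface shifts its boundary slope by at most one saddle step in the Farey tree. This requires interpreting a $\partial$-compressing disc on $\partial M$ as the inverse of a saddle layering, handling the degenerate outcomes where the $\partial$-compression disconnects the boundary (discard any inessential parallel components) or is absorbed into an isotopy, and verifying that the Farey-distance estimate $d(\alpha,\alpha^*)\le q$ indeed holds. Once this correspondence between $\partial$-compressions and saddle Farey-flips is in place, the Euler-characteristic bookkeeping, the enumeration of fundamental surfaces, the computation of the Farey distance, and the appeal to \Cref{lem:taut_fundamental} are all routine.
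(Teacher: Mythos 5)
Your proposal is correct and follows essentially the same route as the paper: enumerate the fundamental surfaces with connected essential boundary and return the minimum of $-\chi(F)$ plus the saddle-distance in the dual tree of the Farey tesselation, with \Cref{lem:taut_fundamental} closing the lower bound. The one step you flag as the main obstacle --- that a non-trivial $\partial$-compression moves the boundary slope by exactly one saddle step --- is exactly what the paper supplies via the layering analysis in the proof of \Cref{lem:Z2-hom-dual-surface} (a boundary compression is the inverse of layering a tetrahedron that adds a saddle), and your inequality-chain bookkeeping forcing $p=0$ and the tautness of $S^*$ is just a repackaging of the paper's inductive argument that each $S_i$ in the compression sequence is taut.
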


\begin{proof}
Let $\delta$ be an even slope and $S$ be a taut surface for $\delta.$ Hence $S$ has a single boundary curve of slope $\delta$, is incompressible, and satisfies $|| \; \delta \; || = -\chi(S).$ If $S$ is not $\partial$--incompressible, we can perform boundary compressions on $S$ until we have an incompressible and $\partial$--incompressible surface. Denote the resulting sequence of surfaces $S = S_0,$ $S_1,$ \ldots, $S_n$ with boundary slopes $\delta = \delta_0, \delta_1, \ldots, \delta_n$, where $S_n$ is an incompressible and $\partial$--incompressible surface, and $S_i$ is obtained from $S_{i-1}$ by a single boundary compression. In particular, $\chi(S_i) = \chi(S_{i-1})+1.$

Let $\delta_i$ be the slope of $S_i.$ Since $S=S_0$ is a taut surface for $\delta_0,$ it follows inductively that $S_i$ is a taut surface for $\delta_i,$ since otherwise reversing the process of boundary compressions by adding saddles to a taut surface would result in a surface of higher Euler characteristic. It now follows from \Cref{lem:taut_fundamental} that $S_n$ is isotopic to a fundamental surface with respect to the $0$-efficient triangulation $\tri.$

A priori, there are infinitely many possibilities for the slope $\delta_1$ that result from a boundary compression on $S_0$, as can be seen from the Farey tree described in \Cref{fig:Farey}. However, the facts that we have a sequence of boundary compressions terminating at a fundamental surface (and hence at one of only a finite list of slopes), and that the set of boundary slopes can be organised via the trivalent tree that is the dual 1--skeleton of the Farey tesselation, makes the problem of determining $\chi(S)$ finite.

In order to compute the norm of the given even slope $\delta,$ we first compute the (finite) set of all fundamental surfaces. This is equivalent to enumerating a Hilbert basis on the projective solution space.
Amongst these, we select the surfaces with connected boundary. Denote these surfaces $F_i$ and their slopes $\alpha_i.$ We remark that $F_i$ may not be taut for $\alpha_i.$ For all the surfaces in the list that have the same slope, we only keep one of maximal Euler characteristic.

Consider the Farey tesselation associated with the framing $(\meridian_2,\longitude_2).$ Recall that the dual 1--skeleton is an infinite trivalent tree. We use this to define distances between ideal triangles (equivalently, isotopy classes of 1--vertex triangulations). 
For every slope, there are infinitely many ideal triangles with a vertex at this slope.
 We make a canonical (but arbitrary) choice by choosing, for each $\alpha_i$ the ideal triangle $\tau(\alpha_i) = (\alpha_i, \beta_i, \gamma_i)$ that is at shortest distance from the base triangle $\big( \frac{1}{0}, \frac{0}{1}, \frac{-1}{1} \big).$ Note that this is characterised by the relationship $\alpha_i = \beta_i \oplus \gamma_i$ in Farey addition (whereby numerators and denominators are simply added). If $\alpha_i$ is the ideal vertex of some ideal triangle $\tau_i,$ then we call $\alpha_i$ the \emph{even slope} of $\tau_i.$ Note that the set of all ideal triangles with the same even slope corresponds to an infinite line in the dual tree.

The effect of a boundary compression on the slope of a surface is exactly one step in the dual tree between triangles with distinct even slopes. The reverse step is the addition of a saddle. Hence the sequence $S = S_0,$ $S_1,$ \ldots, $S_n$ described above corresponds to a path without backtracking between $\tau(\delta_0)$ and $\tau(\delta_n)$ in the dual tree. The difference $\chi(S_n) - \chi(S_0)$ is the number of edges in this path that have endpoints in triangles of different even slopes. This difference is well-defined since the dual $1$--skeleton of the Farey tesselation is a tree.

Given even slopes $\alpha$ and $\beta$, let $d(\alpha, \beta)$ be the number of edges in the shortest path between $\tau(\alpha)$ and $\tau(\beta)$ that have endpoints in triangles of different even slopes. Note that this is independent of the choice of $\tau(\alpha)$ and $\tau(\beta)$ as triangles with those even slopes.
It follows that 
\[ || \; \delta \; || = -\chi(S) = \min_{F_i} \{ - \chi(F_i) + d(\alpha_i, \delta) \}\]
This completes the proof. 
\end{proof}

The proof leads to the following algorithm:
\begin{algorithm}
\label{algo:slopenorm}
Compute the norm of a given even slope
\begin{algorithmic}
  \State {\sc Input:}
  \State 1. $\tri$ $0$-efficient triangulation of $M$
  \State 2. $(\meridian_2,\longitude_2)$ $2$-torsion framing of $\partial M$
  \State 3. $p/q$ even boundary slope
  \State
  \State Compute fundamental surfaces of $\tri$
  \For{$S$ fundamental surface with connected boundary}
    \State Compute boundary slope of $S$ with respect to $(\meridian_2,\longitude_2)$
    \If{boundary slope already in Farey tesselation data structure}
    \State update norm = min (norm, $-\chi(S)$)
    \Else 
    \State insert boundary slope and norm into Farey tesselation data structure
    \EndIf
  \EndFor
  \State Insert $p/q$ into boundary slope into Farey tesselation data structure
  \State
  \State \Return minimum of (distance + norm) of $p/q$ to boundary slopes in the Farey tesselation data structure    
\end{algorithmic}
\end{algorithm}

\begin{remark}[(Farey tesselation data structure.)]
In the proof, we have chosen to associate a canonical triangle in the Farey tesselation with an even slope. In practice, one may use any triangle with the slope, thus saving some computations. If the norms of many slopes are to be computed, then it would be worthwhile to first setup a data structure containing the norm for each slope of a fundamental surface. 

It is important to note that we do not have to deal with the infinite object that is the Farey tesselation, but with a finite tree that is modelled on the dual 1-skeleton of the Farey tesselation. 
The nodes of our data structure are the ones of the dual 1--skeleton corresponding to even boundary slopes realised as boundary slopes of fundamental surfaces. Arcs are established between nodes along edges in the dual 1-skeleton of the Farey tesselation. 
This is an efficient procedure thanks to the Euclidean algorithm. It may add auxiliary nodes that are common to paths coming from different nodes.
The arcs are assigned weights equal to the number of edges in the dual 1-skeleton of the Farey tesselation that have endpoints in triangles of different even slopes.
With this setup, providing a query boundary slope amounts to inserting this extra slope into the data structure (exactly as before), and computing weighted path lengths without backtracking to all other nodes. 
\end{remark}


\section{Crosscap number of knots}
\label{sec:crosscap}

We are now restricting our view to knot exteriors with the following special property. Suppose $N$ is a closed, orientable 3--manifold, and $K\subset N$ a knot with $[K] = 0 \in H_1(N; \mathbb{Z}_2).$ 

Let $\nu(K)$ be an open regular neighbourhood of $K.$ We assume that the exterior $M = N\setminus \nu(K)$ is irreducible and contains no embedded non-separating torus and no embedded Klein bottle. For instance, this is the case for any knot in $N=\mathbb{S}^3$, and it is the case for any hyperbolic knot in N, i.e. when $M$ has a complete hyperbolic structure of finite volume.

The knot $K$ is \textbf{trivial} in $N$ if there is a properly embedded disc in $M$ that has non-trivial slope on $\partial M.$ In particular, $K$ is non-trivial if and only if $M$ has incompressible boundary.

The \textbf{crosscap number} $\cross(K)$ of a non-trivial knot $K\subset N$ is defined by:

\[ \cross(K) = \min \{ \; 1 - \chi(S) \; \mid S \text{ is a non-orientable spanning surface for } K \; \}\]

The crosscap number of a trivial knot is defined to be zero. The subtle difference between crosscap number and slope norm is that the crosscap number is not simply obtained by computing the minimal norm over all slopes of spanning surfaces, since it also takes into account the orientability class of a taut surface.


\subsection{Geometric framings and spanning surfaces}

The closure of $\nu(K)$ is a solid torus, and the curve $\meridian_g$ on $\partial M$ bounding the meridian disc for this solid torus is \textbf{the geometric meridian} for $K.$ This information allows us to pass between $(N,K)$ and $(M, \meridian_g).$ 

A \textbf{spanning surface} for $K$ in $N$ is an embedded, connected surface $S$ in $N$ with $\partial S = K.$
If $S$ is a spanning surface for $K$ in $N$, then $F = S \cap M$ has the property that $\partial F$ has algebraic intersection number $\pm 1$ with $\meridian_g$ on the boundary torus (after choosing orientations for both curves). Conversely, suppose $F$ is a properly embedded surface in $M$ with a single boundary component. If $\partial F$ has algebraic intersection number $\pm 1$ with $\meridian_g$, then $F$ extends to a spanning surface of $K$ in $N$.

The condition that $[K] = 0 \in H_1(N; \mathbb{Z}_2)$  implies that $K$ has a (possibly non-orientable) spanning surface. The intersection pairing with the meridian shows that $\meridian_g$ maps to a non-trivial element of $\text{im} ( H_1(\partial M, \mathbb{Z}) \to H_1(M, \mathbb{Z}_2)),$ and hence is a 2--meridian.

Recall the definition of a homological framing in \Cref{def:Homological_framing}. In the setting here, it is more natural to define a geometric framing that includes the class of the boundary of the meridian disc as a generator.

\textbf{Geometric framing, I.} Suppose $K$ has an orientable spanning surface $S.$ Then the boundary of $S$ is the homological longitude of $M$ and, due to the intersection pairing, the geometric meridian is a homological meridian. In particular, $K$ has an orientable spanning surface if and only if $[K] = 0 \in H_1(N; \mathbb{Z})$. By the discussion above, in this case, every non-orientable spanning surface has boundary slope of the form $\meridian_g^{2k}  \longitude_\infty$ (recall that we represent isotopy classes of unoriented curves by choosing a representative with non-positive longitudinal coordinate).
Any orientable spanning surface can be turned into a non-orientable spanning surface by attaching a saddle, and iteratively adding saddles shows that the set of all slopes of spanning surfaces is precisely the set 
\[S(K) = \{ \meridian^{2k}_\infty  \longitude_\infty \mid k \in \mathbb{Z}\}\]
 for any homological meridian. Alternatively, one may define a \textbf{geometric longitude} $\longitude_g = \longitude_\infty$ and hence 
\[S(K)=\{ \meridian^{2k}_g  \longitude_g \mid k \in \mathbb{Z}\}\]
 is the set of all boundary slopes of spanning surfaces. See \Cref{fig:evenslopes} for the subtree of spanning slopes sitting inside the dual of the Farey tesselation.

\textbf{Geometric framing, II.} Now suppose $K$ has no orientable, but a non-orientable spanning surface $S.$ This is the case if and only if $[K] = 0 \in H_1(N; \mathbb{Z}_2)$ and $[K] \neq 0 \in H_1(N; \mathbb{Z}).$  The ``only if" direction follows from the existence of $S$ and the discussion above, and the ``if" direction from the construction given in the proof of \Cref{lem:Z2-hom-dual-surface}. By the above, we have $[\partial S] =  \ \meridian^{2k_0}_\infty \longitude^{q_0}_\infty,$ for some $q_0,k_0 \in \mathbb{Z}$, $q_0$ and $2k_0$ co-prime, and $q_0> 0.$ Since this is isotopic to the core curve of $\nu(K),$
$\meridian_g = \meridian^{r_0}_\infty \longitude^{s_0}_\infty,$ where $2k_0s_0 - q_0r_0 = \pm 1.$ 
Each slope of a non-orientable surface is zero in $H_1(M; \mathbb{Z}_2)$ and meets the geometric meridian once algebraically. Again, by adding saddles to $S$, one can obtain all possible slopes that arise this way, and 
we conclude that the slopes of all spanning surfaces are precisely the set 
\[S(K)=\{ \meridian_g^{2k} ( \meridian^{2k_0}_\infty \longitude^{q_0}_\infty) \mid k \in \mathbb{Z}  \}\]
 To obtain a more pleasing description, we define a \textbf{geometric longitude} $\longitude_g$ as follows.
Fix a Euclidean norm on $H_1(\partial M, \mathbb{Z})$ with the property that $|| \; \longitude_\infty \; || = 1$ and $|| \; \meridian_g \; || = 1.$
Then define $\longitude_g$ to be a shortest curve in $S(K).$ It then follows that 
\[S(K)=\{ \meridian^{2k}_g \longitude_g \mid k \in \mathbb{Z}\}\]

\textbf{Computing intersection numbers.} Suppose $\tri$ is a triangulation of $M$ with the property that the induced triangulation $\tri_\partial$ of $\partial M$ is a two-triangle triangulation of the torus. The purpose of this section is to show how to set up the equations to determine whether a given normal surface is a spanning surface.

Given a normal surface $S$ with non-empty boundary, we obtain $\partial S = c$ as a (not necessarily connected) normal curve in $\tri_\partial$.  The normal curve $c$ is connected if and only if the greatest common divisor of its coordinates equals one. Hence $S$ can only be a spanning surface if this is the case. 

We assume that the geometric meridian $\meridian_g$ is given as a normal curve with respect to $\tri_\partial$. We now explain how to compute the minimal number of intersections between the isotopy classes of any two essential, connected normal curves (such as $\meridian_g$ and $\partial S = c$). This  makes use of the fact that, on the torus, the geometric and the algebraic intersection numbers of oriented curves coincide.

\begin{figure}[htb]
  \centerline{\includegraphics[width=13cm]{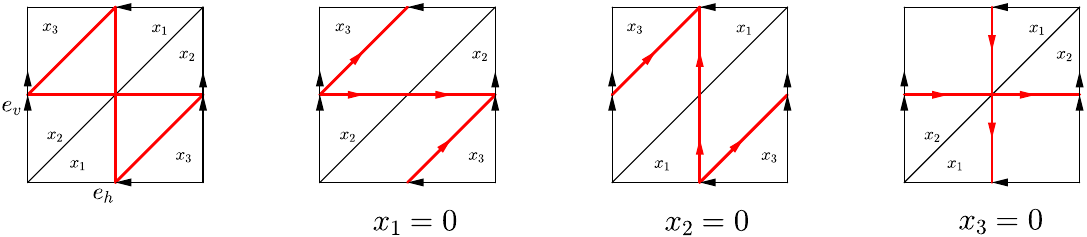}}
  \caption{The figure-8 graphs and their orientations for the three types of essential normal curves.
  \label{fig:normalcurves}}
\end{figure}

Represent the triangulation $\tri_\partial$ as the identification space of a square with a diagonal as shown in \Cref{fig:normalcurves}, and label the normal coordinates as shown. It was shown in \cite{Jaco-decision-2003} that the normal coordinates $(x_1, x_2, x_3)$ of one triangle determine the normal coordinates in the second triangle as indicated in the figure.
A connected essential normal curve contains no vertex linking curves, and hence its normal coordinate satisfies $x_i=0$ for at least one $i \in \{ 1,2,3\}$. Again, we check that $\partial S = c$ satisfies this requirement.

It follows that an essential normal curve can be isotoped to be in the neighbourhood of a figure--8 graph on the torus that is composed of four normal arcs. There are three such graphs, as shown in \Cref{fig:normalcurves}.
The normal coordinates can be viewed as weights on the edges of this graph. 
It was observed in \cite{bachman16complicatedHeegaardSplittings} that each essential normal curve can be given a canonical orientation which depends only on which normal coordinate is zero. This, in turn can be viewed as an orientation of the edges of the corresponding figure--8 graph as shown in \Cref{fig:normalcurves}.

In particular, the oriented intersection numbers between an oriented normal curve and two oriented edges $e_v$ and $e_h$ of the triangulation $\tri_\partial$ can be read off from the normal coordinate. We call them \textbf{oriented edge weights}. Hence the intersection number of two essential normal curves can be computed as a determinant in these oriented edge weights. With respect to the labelling shown in \Cref{fig:normalcurves}, the normal coordinate $x = (x_1, x_2, x_3)$ results in the oriented edge weights 
$(x_2+x_3, x_3)$ if $x_1=0$, 
$(x_3, x_1+x_3)$ if $x_2=0$, and
$(x_2, -x_1)$ if $x_3=0$.
The intersection number between two oriented essential normal curves curves is then the determinant of the $2\times 2$ matrix with these oriented edge weights as columns. 
For example, if 
$x = (0, x_2, x_3)$ and $y = (y_1, 0, y_3),$ then the intersection number is $(x_2+x_3)(y_1+y_3) - x_3y_3 = x_2y_1+x_2y_3+x_3y_1.$

The up-shot of this discussion is that given a normal curve with respect to $\tri_\partial$ that represents the meridian, we have an algorithm to determine whether a normal surface is a spanning surface.

\textbf{Efficient suitable triangulations.} 
One of the tools used in \cite{burton12-crosscap} is the following concept. We say that $\tri$ is an \textbf{efficient suitable triangulation} of $M$ if there is no normal 2--sphere with respect to $\tri,$ and the induced triangulation $\tri_\partial$ of the boundary has a single vertex and the geometric meridian is isotopic with a boundary edge.
As in \cite{burton12-crosscap}, it follows easily from \cite[Proposition 5.15 and Theorem 5.20]{jaco03-0-efficiency} that if $K$ is a non-trivial knot in $N$, then $M$ has an efficient suitable triangulation and that this can be constructed algorithmically from any triangulation of $M.$ Namely, one first constructs a 0--efficient triangulation of $M.$ If one of the edges in the boundary is isotopic with the geometric meridian, then we are done. Otherwise, we layer tetrahedra on the triangulation until one of the edges is the geometric meridian. A minimal layering sequence can be detemined from the Farey tesselation. Now a 0--efficient triangulation has no normal 2--spheres. Consider a step in the layering procedure. If the triangulation before layering a tetrahedron on the boundary has no normal 2--sphere, then so does the triangulation after layering a tetrahedron, since any normal surface that meets the new tetrahedron in a disc has non-empty boundary. So it follows inductively that an efficient suitable triangulation can be obtained from a 0--efficient triangulation.

\textbf{Normalisation.} An excellent discussion of the procedure that constructs normal surfaces from properly embedded surfaces transverse to a triangulation can be found in Matveev's book~\cite[\S3.3.3]{Matveev-algorithmic-2007}. See also \cite{burton12-crosscap} for a similar discussion to what follows. Normalisation moves are either isotopies, removal of trivial components, or compressions along circles of intersection of the surface with triangles of the triangulation, or boundary compressions along discs that have part of their boundary in the interior of boundary edges of the triangulation.

Suppose $S$ is a properly embedded surface in $M$ that is incompressible and transverse to the triangulation. Since $M$ is irreducible, any normalisation move that would result in a 2--sphere or properly embedded disc split off from $S$ can be avoided by a suitable isotopy of $S$ that is supported in a regular neighbourhood of a ball bounded by the 2--sphere or co-bounded by the disc. In particular, this isotopy removes some intersection points of $S$ with the edges or some intersection circles of $S$ with the triangles (possibly both). Instead of a normalisation move of this type, we perform the associated isotopy.
If $S$ can be transformed to a normal surface in this way, then we say that $S$ \textbf{normalises by isotopies}. 

The only normalisation moves that cannot be replaced by an isotopy are boundary compressions with the following property. The boundary compression disc $D$ is contained in a 3--simplex $\Delta$ of the triangulation and $\partial D = \gamma \cup \gamma'$ with the property that $\gamma$ is an arc contained in the interior of an edge $e\subset \partial M$ of $\Delta$ and $\gamma'$ is an arc contained in $S$. Moreover, $D \setminus \gamma$ is contained in the interior of $\Delta$. Note that such a boundary compression decreases $S\cap e$ by two points and leaves the intersections with all other boundary edges unchanged. The normalisation procedure may involve multiple boundary compressions. The up-shot of this discussion is that if $S$ meets one boundary edge in a single point, then this is also true of any surface obtained from $S$ by applying the normalisation procedure (with or without the modification of using isotopies).


\subsection{Crosscap number via suitable triangulations}

\Cref{thm:crosscap-suitable}, which is proved in this section, is a result about the crosscap number of certain knots in closed 3--manifolds, which, in particular, gives an algorithm to compute the crosscap number of an arbitrary knot in $\mathbb{S}^3.$ The result requires the use of efficient suitable triangulations, as defined in the previous section. \Cref{thm:crosscap} in \Cref{sec:efficient} is the equivalent result for arbitrary $0$-efficient triangulations. Both results share \Cref{lem:main_lemma}, which is why forward references to \Cref{thm:crosscap} and \Cref{sec:efficient} appear in this section.

The proof of \Cref{thm:crosscap-suitable} shows that the following algorithm to compute crosscap number is correct:

\begin{algorithm}
\label{algo:crosscapno-std}
Compute crosscap number of knot $K$ in $3$-manifold $M$, satisfying conditions of \Cref{thm:crosscap-suitable}.
\begin{algorithmic}
  \State {\sc Input:}
  \State 1. $\tri$ efficient suitable triangulation of $M$ with boundary $\tri_\partial$
  \State 2. $\meridian$ meridian of $K$ represented by an edge of $\tri_\partial$
  \State
  \State Compute $S_0$, the set of fundamental surfaces of $\tri$
  \State Compute $S_1 = \{S\in S_0 \mid \partial S \cap \meridian = 1 \}$, the set of spanning fundamental surfaces
  \State Compute $x = \max \{\chi (S) \mid S \in S_1\}$, maximum Euler characteristic of all spanning fundamental surfaces
  \If{$x=1$}
    \State \Return $\cross(K)=0$
  \EndIf
  \If{$x = 0$}
    \State \Return $\cross(K)=1$
  \EndIf
  \State Compute $S_2 = \{ S \in S_1 \mid \chi(S) = x\}$, the set of maximal Euler characteristic spanning fundamental surfaces
  \If{$S_2$ contains non-orientable surface}
    \State \Return $\cross(K) = 1-x$
  \Else
    \State \Return $\cross(K) = 2-x$
  \EndIf
\end{algorithmic}
\end{algorithm}

\begin{proof}[Proof of \Cref{thm:crosscap-suitable}]
Suppose $S_o$ is an orientable spanning surface of maximal Euler characteristic, and $S_n$ is a non-orientable spanning surface of maximal Euler characteristic. Then $c(K) = \min (\ 1-\chi(S_n), 2-\chi(S_o) \ ).$ If there is no orientable spanning surface, then $c(K) = 1-\chi(S_n).$
The definitions imply that $c(K) \le \min (A, B),$ since any orientable spanning surface can be turned into a non-orientable spanning surface by adding a saddle appropriately.

Both $S_o$ and $S_n$ are incompressible due to the maximality condition on Euler characteristic.

Since $\partial M$ is a torus and $S_o$ is orientable, this also implies that $S_o$ is $\partial$--incompressible. We may therefore assume that $S_o$ is normal in $M.$ By \cite[Corollary 3.8]{Jaco-decision-2003}, two compatible normal surfaces with non-empty boundary either have the same slope (hence their sum has at least two boundary curves) or complementary boundary curves (hence their sum has boundary containing a trivial curve). Hence only one of the fundamental surface summands, $G_o$, yielding $S_o$ has non-empty boundary and $\partial S_o = \partial G_o.$
Since Euler characteristic is additive and there are no normal 2--spheres, we have $\chi(S_o) \le \chi(G_o)$. Since $\partial S_o = \partial G_o$, $G_o$ is also a (not necessarily orientable) spanning surface.

If $\chi(S_n)<\chi(S_o),$ then $G_o$ must be orientable.\footnote{See the proof of \Cref{thm:knot-genus} for an argument that $G_o$ is always orientable if $S_o$ is of least weight in its isotopy class.} Also, $\chi(S_n)<\chi(S_o)$ implies $\cross(K) = 2-\chi(S_o) \le 1-\chi(S_n).$ Since $G_o$ is an orientable spanning surface, we have $\chi(G_o)=\chi(S_o),$ and therefore $\cross(K) = 2-\chi(G_o) \ge B.$
Thus, $c(K)=B.$\footnote{Note that in this case, we either have $c(K) = B <A$ or $c(K) = B =A$. See \Cref{sec:computation} for examples of both cases.}

Hence assume $\chi(S_n)\ge \chi(S_o).$ In this case $\cross(K) = 1-\chi(S_n) < 2-\chi(S_o).$

Now $S_n$ may not be $\partial$-incompressible. We use the following argument from \cite{burton12-crosscap}.
Since $\partial S_n$ has algebraic intersection number one with the geometric meridian $\meridian_g$, we may isotope $S_n$ in $M$ so that it meets the edge in $\tri_\partial$ that represents $\meridian_g$ in exactly one point. We now isotope $S_n$ so that it is transverse to the triangulation. This still meets $\meridian_g$ in exactly one point. As noted above, any surface obtained by applying the normalisation procedure to $S_n$ results in a surface meeting $\meridian_g$ in exactly one point, and hence a spanning surface.

In particular, any non-trivial boundary compression involved in putting $S_n$ into normal form results in a spanning surface of larger Euler characteristic. Since $\chi(S_n)\ge \chi(S_o)$, this surface must again be non-orientable, which contradicts the maximality of the Euler characteristic of $S_n.$ 
Hence $S_n$ can be normalised by isotopies. 
As above, we see that only one of the fundamental surface summands, $G_n$, yielding $S_n$ has non-empty boundary. Since Euler characteristic is additive, we have $\chi(S_n) \le \chi(G_n)$. Since $\partial S_n = \partial G_n$, $G_n$ is also a spanning surface.

We now have the following cases:

If $\chi(S_n)>\chi(S_o),$ then $G_n$ must be non-orientable. This forces $\chi(G_n)=\chi(S_n),$ and we have $c(K)\ge A,$ which implies $c(K) =  A<B.$

If $\chi(S_n) = \chi(S_o)$, then $\chi(G_o) = \chi(S_n) = \chi(S_o) = \chi(G_n).$
The proof is continued with \Cref{lem:main_lemma}, where it is shown that there is at least one non-orientable fundamental spanning surface with this maximal Euler characteristic. Hence $c(K) = A<B.$ 
\end{proof}

\begin{lemma}
\label{lem:main_lemma}
Suppose $M$ and $\tri$ are as in the hypothesis of \Cref{thm:crosscap-suitable} or \Cref{thm:crosscap}. Also assume, in the case of \Cref{thm:crosscap}, that $\cross(K)<Z.$

Suppose $S_o$ is an orientable spanning surface of maximal Euler characteristic, and $S_n$ is a non-orientable spanning surface of maximal Euler characteristic. If $\chi(S_n) = \chi(S_o)$, then there is at least one non-orientable fundamental spanning surface $F$ with $\chi(F)=\chi(S_n) = \chi(S_o).$
\end{lemma}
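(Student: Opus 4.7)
The plan is to analyse the fundamental surface decomposition of $S_n$ and show that its unique boundary-carrying summand must itself be non-orientable, so that it can serve as the required $F$.

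First I would put $S_n$ in normal form without losing Euler characteristic. In the setting of \Cref{thm:crosscap-suitable} this is exactly the analysis carried out in the paragraphs immediately preceding this lemma: under $\chi(S_n)\ge\chi(S_o)$ any boundary compression during normalisation would produce a spanning surface of strictly larger Euler characteristic, either violating the maximality of $S_n$ among non-orientable spanning surfaces or (if the result were orientable) the maximality of $S_o$. In the setting of \Cref{thm:crosscap}, the hypothesis $\cross(K)<Z$ plays the analogous role: a sequence of $m\ge 1$ boundary compressions during normalisation of $S_n$ would terminate in a $\partial$-incompressible normal surface with connected essential boundary whose fundamental summand $F$ (with non-empty boundary, as will be justified in the next step) is non-spanning and satisfies $1-\chi(F)+d(\partial F,\mathcal{F}_e)\le 1-\chi(S_n)=\cross(K)$, contradicting $Z>\cross(K)$.

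Having normalised $S_n$ without changing its Euler characteristic, I would invoke \Cref{cor:JS-key}: because $\partial S_n$ is a single essential curve, exactly one fundamental summand of $S_n$, call it $G_n$, has non-empty boundary, and $\partial G_n=\partial S_n$; in particular $G_n$ is itself a spanning surface. Writing $S_n=G_n+\sum_i k_i T_i$ with each $T_i$ a closed fundamental surface, additivity of $\chi$ combined with $0$-efficiency (so $\chi(T_i)\le 0$) gives $\chi(G_n)\ge \chi(S_n)$. On the other hand, $G_n$ being a spanning surface forces $\chi(G_n)\le \chi(S_n)$ when $G_n$ is non-orientable (by maximality of $S_n$), and $\chi(G_n)\le \chi(S_o)=\chi(S_n)$ when $G_n$ is orientable (by maximality of $S_o$, using the standing assumption $\chi(S_n)=\chi(S_o)$). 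Hence $\chi(G_n)=\chi(S_n)$ and every $\chi(T_i)=0$, so each $T_i$ is a torus or Klein bottle; the hypothesis that $M$ contains no embedded Klein bottle then makes each $T_i$ a torus.

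The main obstacle is ruling out the possibility that $G_n$ is orientable. I would appeal to the fact that in an orientable $3$-manifold, a surface is $2$-sided if and only if it is orientable, and that the Haken sum of $2$-sided normal surfaces is again $2$-sided: chosen coorientations on the summands extend canonically through the regular exchange at each intersection circle to a coorientation on the sum. If $G_n$ were orientable, then $G_n$ and all the tori $T_i$ would be $2$-sided, so iterating the Haken sum $G_n+\sum_i k_i T_i=S_n$ would produce a $2$-sided (hence orientable) normal surface, contradicting the non-orientability of $S_n$. Therefore $G_n$ is non-orientable, and $F:=G_n$ is the desired fundamental non-orientable spanning surface with $\chi(F)=\chi(S_n)=\chi(S_o)$.
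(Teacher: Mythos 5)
Your first two paragraphs (normalising $S_n$ without loss of Euler characteristic, using \Cref{cor:JS-key} to isolate a single boundary summand $G_n$ which is a spanning surface with $\chi(G_n)=\chi(S_n)$, and identifying the remaining summands as tori) are sound and agree with the reductions carried out in the paper. The fatal problem is the final step: the claim that \emph{the Haken sum of $2$-sided normal surfaces is again $2$-sided, because coorientations extend canonically through the regular exchanges}, is false, and it is precisely the difficulty this lemma exists to overcome. At a curve $\gamma\subset R\cap T$ the regular exchange is dictated by the normal disc types (normality of the sum), not by any choice of coorientation; whether the coorientations of the two patches being joined agree across the exchange annulus is an independent local datum at each curve. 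The obstruction to coherently orienting all patches is a $\mathbb{Z}_2$--valued class on the graph whose vertices are patches and whose edges are exchange annuli, and when $R\cap T$ has several components this graph has cycles, so the obstruction can be nonzero: a Haken sum of orientable surfaces can perfectly well be non-orientable. If your claim were true, the hypothesis of the lemma ("every fundamental spanning surface of maximal $\chi$ is orientable, yet $S_n$ is a non-orientable Haken sum of them and tori") could never arise and the lemma would be vacuous; the paper instead treats this as a genuine possibility and devotes the bulk of the proof to excluding it.

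Concretely, the paper's argument takes a least-weight non-orientable normal spanning surface $S$, reduces to $S=R+T$ with $R$ a connected orientable spanning surface and $T$ a single separating torus, and then must work hard: it shows no patch is a disc, traces an orientation-reversing loop through the exchange annuli to prove $|R\cap T|$ is even, shows all patches on the side of $T$ away from $\partial M$ are separating annuli (using the no--non-separating-torus hypothesis), and only then concludes that the specific resolution $R+T$ is forced to be orientable (or disconnected), reaching a contradiction. The caption of \Cref{fig:intersection} explicitly refers to the "potential obstructions to orientability of the resolution realised by the Haken sum $S=R+T$". The reverse phenomenon also occurs in the paper's proof of \Cref{thm:knot-genus}, where an \emph{orientable} surface is written as a Haken sum with a \emph{non-orientable} summand. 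So your proposal is missing the entire core of the argument: you need to analyse how the exchanges along $R\cap T$ can assemble orientable pieces into a non-orientable surface and rule that out using least weight, the absence of non-separating tori, and the separating-annulus structure of the patches.
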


\begin{proof}
We prove this by contradiction. 
We know that every non-orientable spanning surface of maximal Euler characteristic is isotopic to a normal surface 
(since we either assume that the triangulation is suitable or that $\cross(K)<Z$). 
We also know (from the last paragraph in the proof of each \Cref{thm:crosscap-suitable,thm:crosscap})
that there is at least one fundamental spanning surface of Euler characteristic $\chi(S_n) = \chi(S_o).$
Suppose every such fundamental spanning surface is orientable, and hence $S_n$ is isotopic to a non-trivial Haken sum $S$ of normal surfaces. We then show that this implies that $S$ must be orientable as well, a contradiction.

Here is the outline of our proof:
   \begin{enumerate}
    \item {\bf Setup:} Let  $S$ be a non-orientable normal spanning surface with $\chi(S) = \chi(S_n)$ that is of least-weight.
    \item {\bf Setting up the Haken sum for $S$:} We show that $S = R+T$, where $T$ is a torus, $R$ connected (incompressible), and no patch is a disc.
    \item {\bf $R$ meets $T$ an even number of times:} We show that $R\cap T$ is an even number of essential curves on $T$.
    \item {\bf All components of $R \setminus T$ above $T$ are annuli:} We show that any patch of $R$ in the component of $M\setminus T$ that does not contain $\partial M$ must be a (separating) annulus.
    \item {\bf $R+T$ is orientable:} We use the above statements to conclude that $R+T$ must be orientable.
  \end{enumerate}

  \paragraph{Proof setup:}
Suppose $x= \chi(S_n)=\chi(S_o)$ is the maximum Euler characteristic of a fundamental spanning surface.  
We assume that all fundamental spanning surfaces with maximum Euler characteristic are orientable. Since $K$ is non-trivial, none of these is a disc. Since there are no orientable spanning surfaces of Euler characteristic $0$, we can hence assume for the remainder of the proof that all orientable fundamental spanning surfaces have strictly negative Euler characteristic.
  
We know that a non-orientable \emph{normal} spanning surface $S$ with $\chi(S)=x$ exist. Furthermore, we may assume that $S$ has least weight amongst all non-orientable normal spanning surfaces with Euler characteristic $x.$
  
By hypothesis, $S$ is not fundamental.
  
\paragraph{Setting up the Haken sum for $S$:}
By \cite[Corollary 3.8]{Jaco-decision-2003} (see \Cref{cor:JS-key}), two compatible normal surfaces with non-empty boundary either have the same slope (hence their sum has at least two boundary curves) or complementary boundary curves (hence their sum has boundary containing a trivial curve). Hence only one of the fundamental surface summands, $F$, yielding $S$ has non-empty boundary and $\partial F = \partial S.$ In pacticular, $F$ is a spanning surface for $K$ and has negative Euler characteristic.
  
By hypothesis, $\tri$ does not admit any normal spheres, and since $M$ is irreducible with non-empty boundary a torus, it does not admit any embedded $\mathbb{R}P^2$. Hence, every surface in the Haken sum giving $S$ has non-positive Euler characteristic. Since $F$ is a spanning surface and Euler characteristic is additive in Haken sums, the maximality of Euler characteristic $x$ amongst all spanning surfaces forces $\chi(F)=x$, and all other summands have Euler characteristic zero. Since $F$ is a fundamental spanning surface and $\chi(F)=x$, it follows that $F$ is orientable. Since $M$ contains no Klein bottles, all other summands are fundamental tori.
  
  \medskip
  
Note that removing any normal torus from any Haken sum with result $S$ must produce an orientable surface, since $S$ is least-weight. 
  
Let $R$ be such an orientable surface, and let $T$ be the missing torus such that $R+T = S$. There are potentially many such decompositions and we claim that for at least one of them $R$ is connected: 
Assume that we have $R+T = S$ with $R$ disconnected. Hence, $R$ must consist of one spanning surface of maximal Euler characteristic, and a number of tori. Since $R+T=S$ is connected, all of these extra torus components of $R$ must intersect $T$. Pick one of these tori, denote it by $T'$, and use the remaining components of $R$ to form the Haken sum $R' = (R\setminus T') + T$. By construction, $S = R' + T'$ and $R' \cap T' \subset R \cap T$ is a proper subset. Iterating this process eventually yields a decomposition $S=R+T$ with $R$ connected.
  
  \medskip  
  
  Since $R$ is an orientable spanning surface of maximal Euler characteristics, it is incompressible and boundary-incompressible.
   
  \medskip
  
As is customary when talking about \textbf{Haken sums}, we refer to the connected components of $(R \cup T )  \setminus \nu(R \cap T)$ as \textbf{patches}. Every patch is a compact orientable subsurface of $S$ with non-empty boundary. Since both $R$ and $T$ are orientable, the patches are connected on $S$ by annuli contained on the frontier of $\nu(R \cap T)$. These are called the \textbf{exchange annuli}. The core curve of an exchange annulus is called a \textbf{trace curve}. Note that a trace curve corresponds to the boundary curves of patches on both $R$ and $T$ that are joined by the corresponding annulus.

The complementary annuli on the frontier of $\nu(R \cap T)$ are the \textbf{irregular annuli}. Let $\gamma \subset R \cap T.$ Attaching the exchange annuli to the patches is called a \textbf{regular exchange} at $\gamma$, and attaching the irregular annuli to the patches is called an \textbf{irregular exchange}.

Amongst all ways to write $S=R+T$, where $R$ is an orientable normal spanning surface of maximal Euler characteristic and $T$ is a normal torus, we assume we have chosen one with the least number of patches. Note that $|R\cap T|\neq \emptyset$ as $S$ is non-orientable.

\paragraph{No patch of $S = R+T$ is a disc.}
This follows almost verbatim from the proof of Lemma 2.1 of~\cite{Jaco-algorithm-1984}. We repeat a slightly adapted version of the beginning of the proof here, as our set-up is slightly different. However, the end-game remains the same.

Suppose $\gamma \subset R\cap T$ and denote the two associated trace curves $\gamma'$ and $\gamma''.$ 
Suppose $\gamma'$ bounds a patch $D'$ that is a disc on $S.$ Since a disc is 2-sided in $M$, there is an embedded disc $D\subset M$ with interior disjoint from $S$ and boundary curve $\gamma''.$ Since $S$ is incompressible this implies that $\gamma''$ also bounds a disc $D''$ on $S.$ Since $D'$ is a patch, we have $D'' \nsubseteq D'.$ We claim that $D''$ is not a patch.

If $D' \subset D'',$ then $D''$ is clearly not a patch. 

Hence assume that $D' \nsubseteq D''.$  Then $D' \cap D'' = \emptyset.$ Perform an irregular exchange along $\gamma$ and regular exchanges along all other curves in $R+T.$ If an irregular annulus does not join $D'$ and $D''$, then we can construct a 2-sphere in $M$ that separates $S$, a contradiction since $S$ is connected. Hence the union of $D'$, $D''$ and an irregular annulus is an embedded 2--sphere in $M.$ Now if both $D'$ and $D''$ are patches, then we may perform a regular exchange along only $\gamma$ and no other component of $R\cap T$ to obtain new normal surfaces $R'$ and $T'$ with $S = R' + T'$. Moreover, $R'$ is isotopic with $R$ and $T'$ is isotopic with $T.$ So $R' + T'$ still satisfies our hypothesis on the Haken sum that $R'$ is an orientable normal spanning surface of maximal Euler characteristic and $T'$ is a normal torus. Now $R' + T'$ has two patches fewer than $R+T,$ since $D'$ and $D''$ are now contained in patches. It follows that $D''$ is not a patch.

So in either case, we establish that $D''$ is not a patch. To conclude, we know that for each patch $D'$ that is a disc, there is an associated disc $D''$ on $S$ which is not a patch and with the property that the boundary curves of $D'$ and $D''$ are associated with the same curve in $R \cap T.$
Now the construction and argument given in the last four paragraphs in the proof of Lemma 2.1 of~\cite{Jaco-algorithm-1984} results in a surface of lower weight than $S$ and isotopic with $S$. Hence no patch is a disc, and in particular, no patch is of positive Euler characteristic. 
  
  \paragraph{Irregular exchanges.}
  Recall that we have established that $R \cap T$ is a collection of essential curves on $T$ slicing up $T$ into a set of annuli. Any such situation can be fully reconstructed from the schematic picture shown in \Cref{fig:intersection}, where the horizontal line represents the torus $T$, and the vertical lines represent $R$ slicing through $T$. Every possible Haken sum is then described by an orientation at each intersection for how the Haken sum operates, and the transverse orientation for every sheet of $R$.
  
    \begin{figure}[h]
    \centerline{\includegraphics[width=\textwidth]{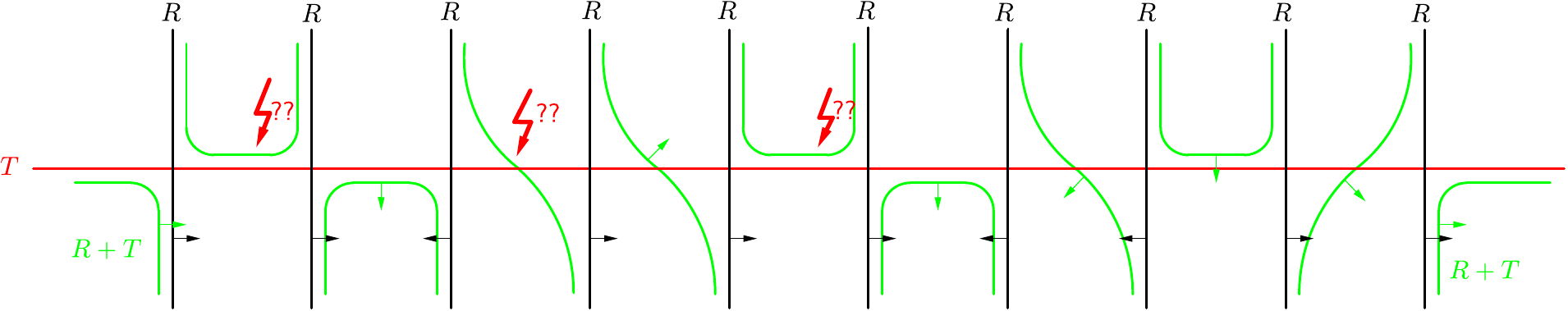}}
    \caption{Cross-section of the intersection of $T$ and $R$ together with transverse orientation of $R$ and marked potential obstructions to orientability of the resolution realised by the Haken sum $S=R+T$. \label{fig:intersection}}
  \end{figure}    

Suppose any non-empty subset of components of $R\cap T$ are resolved via irregular exchanges, whilst all other components are resolved via regular exchanges. This results in a surface that is not normal, has non-empty boundary and is not necessarily connected. However, it contains a component $C$ that is a maximum Euler characteristic spanning surface. Since we have performed at least one irregular exchange, we know that $C$ is not a normal surface and hence normalises to a normal surface of lower weight than $S.$ 

We claim that $C$ is orientable. This is the only place where we have different arguments given the hypotheses of the theorems:

First assume the hypotheses of \Cref{thm:crosscap-suitable}. Since $\partial C = \partial S$, the surface $C$ is a maximum Euler characteristic spanning surface that meets the meridian edge in exactly one point and hence normalises by isotopies. In particular, the resulting normal surface is again a spanning surface. The minimality of the weight of $S$ implies that this spanning surface (and therefore $C$) is orientable.
  
Next assume the hypotheses of \Cref{thm:crosscap} and that $\cross(K)<Z.$ Hence if $C$ is non-orientable, then it normalises by isotopies. But this contradicts the minimality of $S$. Hence $C$ is orientable.

This proves the claim. It follows that if we perform at least one irregular exchange, then there is a component $C$ that is a maximum Euler characteristic spanning surface and that normalised to an orientable spanning surface of maximal Euler characteristic. We will repeatedly make use of this observation.
  
  \paragraph{$R$ meets $T$ an even number of times:} 
  Since $R+T$ is non-orientable, and $R$ and $T$ are orientable, every orientation reversing loop in $R+T$ must pass through patches of both $R$ and $T$. Take one such loop $c \subset R+T$ that minimises the number of times it intersects the exchange annuli between patches of $R$ and $T$. We claim that $c$ intersects each exchange annulus in at most one arc from one boundary component to the other.
  
  To see this first note that if $c$ intersects an exchange annulus in an arc going back to the same  boundary component we can simply isotope it out of the exchange annulus. A contradiction to the assumption that $c$ minimises intersections with exchange annuli.
  
  If $c$ intersects the same exchange annulus in more than one arc, take two such arcs that are next to each other and isotope them into a small disc containing a section of the exchange annulus and a piece of $R$ and $T$ on either side. Delete the two arcs meeting the disc and connect the four ends outside the exchange annulus to obtain a curve $c'$ meeting the exchange annuli of $R+T$ fewer times than $c$ (see \Cref{fig:disk}).
    
   We show that one component of $c'$ must still be orientation reversing. Referring to \Cref{fig:disk} we start at end $1$ which can be connected to end $2$, $3$, or $4$.
  \begin{itemize}
    \item If it is connected to end $2$, we immediately obtain a contradiction to the assumption that $c$ is connected. See \Cref{fig:disk} first row.
    \item If it is connected to end $3$, then end $2$ must connect to end $4$. Tracing transverse orientations through both arcs and connecting them in the disc either leaves us with no, one, or two orientation reversing components of $c'$. We can now check that if none or both components were orientation reversing, then $c$ would have been orientation preserving. A contradiction. Hence, exactly one of them is orientation reversing. A contradiction to the assumption that $c$ intersect the exchange annuli of $R+T$ a minimum number of times. See \Cref{fig:disk} middle row.
    \item If it is connected to end $4$, then end $2$ must be connected to end $3$. Similarly to the case before, we can trace orientations through $c'$ thereby checking all four choices. As before, in each case we obtain contradictions to either $c$ being orientation reversing or $c$ having minimal intersection with the exchange annuli of $R+T$. See \Cref{fig:disk}, bottom row.
  \end{itemize}
  
   \begin{figure}
     {\includegraphics[height=5.3cm]{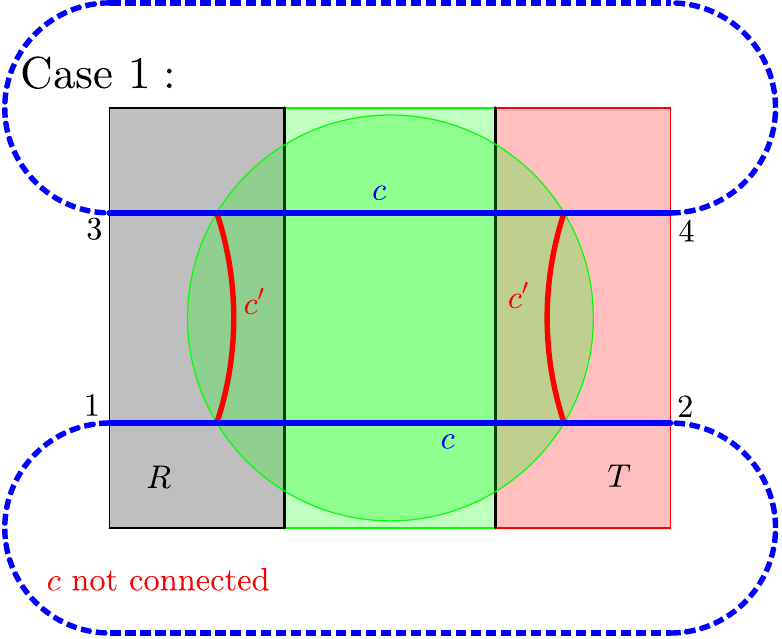}}
     
     \bigskip     
     
     {\includegraphics[height=4.6cm]{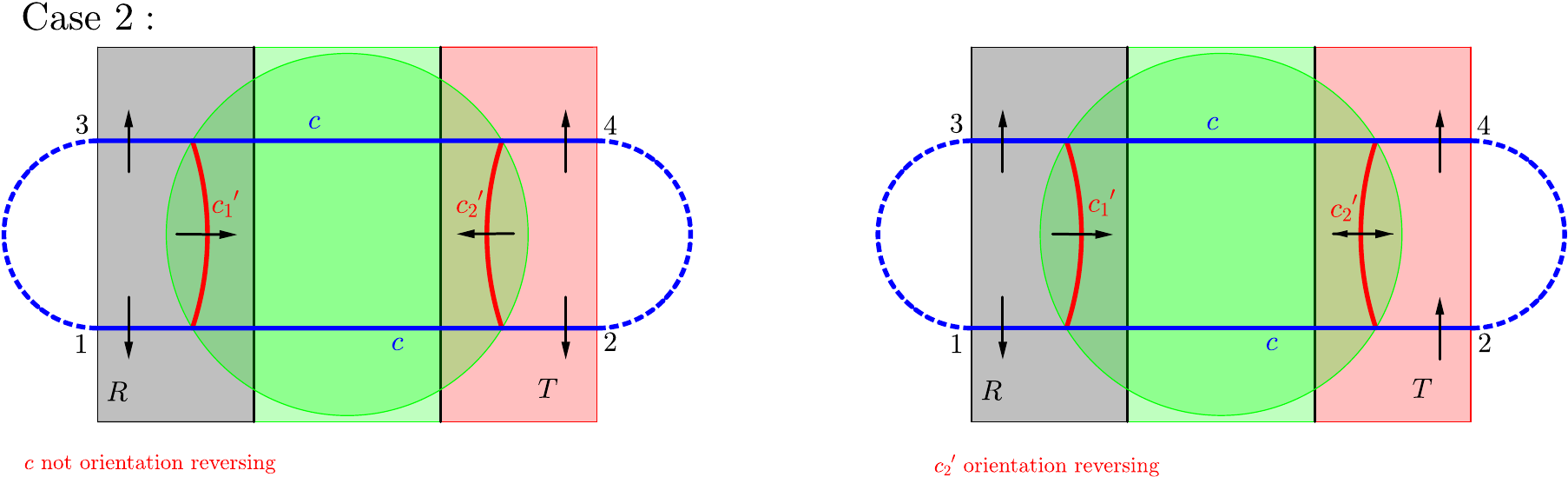}}

     \bigskip     
          
     {\includegraphics[height=5.5cm]{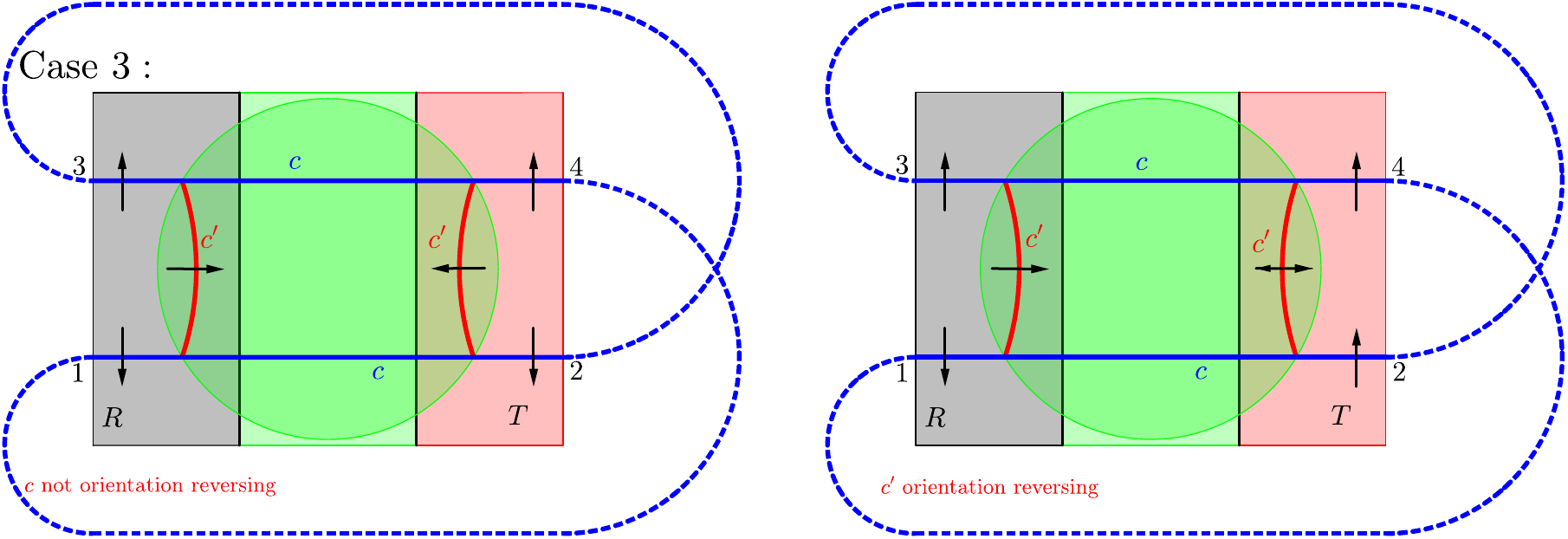}}
   
     \caption{Case 1: $1$ is connected to $2$. $c$ is not connected. Case 2: $1$ is connected to $3$. Two cases of $16$ for choosing orientations on $c$ near $1$, $2$, $3$, and $4$ are shown. The others follow by flipping $R$ and $T$, top and bottom, and direction of all arrows. Case 3: $1$ is connected to $4$. Again, only two out of $16$ cases are shown and the others follow by symmetry. \label{fig:disk}}
   \end{figure}
  
  Altogether it follows that $c$ intersects every exchange annulus at most once.
  
  Now, assume that $c$ is disjoint to the two exchange annuli coming from some component of $R\cap T$. If this is the case, we can make an irregular exchange along that component and regular exchanges along all other components of $R\cap T$. Then the resulting new surface still contains an orientation reversing loop $c$ and hence is non-orientable -- a contradiction to $S$ being least-weight as explained above.
    
  Next, assume that $c$ intersects both exchange annuli coming from some component of $R\cap T$. Consider the solid torus $D^2 \times S^1$ that is a regular neighbourhood of this component and containing the two exchange annuli. Choose an open regular neighbourhood of the two exchange annuli on $S$. There is an isotopy of c on $S$ that is the identity on the complement of this neighbourhood, and has the effect that the intersection of $c$ with these exchange annuli is contained in some disc $D = D^2 \times \{x\}$. In $D$ we now have a similar picture as before and we refer to \Cref{fig:disk}. Performing an annular exchange yielding a new surface $(R+T)'$ cuts $c$ open along two small arcs with ends $1$, $2$, $3$, and $4$. Connecting ends $1$ and $3$ and ends $2$ and $4$ through the newly added annuli yields another curve $c'$ in the changed surface. An analysis of how orientations can be traced on $c'$ results in an orientation reversing loop assuming that $c$ was orientation reversing. It follows that $(R+T)'$ must be non-orientable. A contradiction to the assumption that $R+T$ is least-weight.
  
  Altogether we conclude that $c$ must meet exactly one of the two exchange annuli coming from a component of $R\cap T$ for every such component. Running through $c$ we meet patches of $R$ or $T$ and exchange annuli in an alternating fashion. Moreover, a patch of $R$, followed by an exchange annulus, must then be followed by a patch of $T$ and vice versa (i.e., $c$ runs through patches of $R$ and $T$ in an alternating fashion as well). In particular, $c$ must run through an even number of exchange annuli. But since the number of exchange annuli meeting $c$ is in bijection to the components of $R\cap T$ we conclude that the number of components of $R\cap T$ must be even.
  
\paragraph{All patches of $S$ on one side of $T$ are annuli:}
The Euler characteristic of $S$ equals the sum of the Euler characteristics of all patches on $S$. Since no patch is a disc, the Euler characteristic of any patch is non-positive. We already know that all patches of $S$ contained on $T$ are annuli. Since $T$ is separating in $M$ we say that the component of $M\setminus T$ containing $\partial M$ is \emph{below} $T$ and that the other component is \emph{above} $T$. 

Since $|R\cap T|$ is even, we now perform (possibly irregular) exchanges on $R \cap T$ such that we obtain a new surface $S'$ that is isotopic to a surface disjoint from $T$. This is achieved by alternating the exchanges such that patches on $R$ above $T$ are joined via annuli on $T$ with patches above $T$ and patches below $T$ with patches below $T$. This results in one surface, $R'$, below $T$ and one surface, $T'$, above $T$. As above, $\chi(R') = \chi(S)$ and this forces all patches that are not contained on $R'$ to be annuli.

  \paragraph{$R+T$ is orientable:}
Every properly embedded annulus above $T$ is separating in $M\setminus T$ since otherwise we have a non-separating torus in $M$. Take an innermost annulus patch of $S$, i.e. a patch contained in a component $C$ of $R \setminus T$ with the property that the frontier of $C$ bounds an annulus on $T$ that does not contain any components of $R \cap T.$
There are four possibilities for how the Haken sum $R+T$ connects the patch on $C$ with patches on $T$. One of them produces a separate connected torus component and hence contradicts the connectedness (least-weight) assumption for $R+T$ (see \Cref{fig:annulus} on the top right). Two solutions give us the opportunity to resolve the Haken sum in the opposite way on one of the crossings, producing an extra torus component and not changing the non-orientability of the rest of the Haken sum (see \Cref{fig:annulus} bottom row). This leaves us with the last option which is an exchange of annuli between the summands (see \Cref{fig:annulus} on the top left). 
  
  \begin{figure}[h]
    \centerline{\includegraphics[width=.65\textwidth]{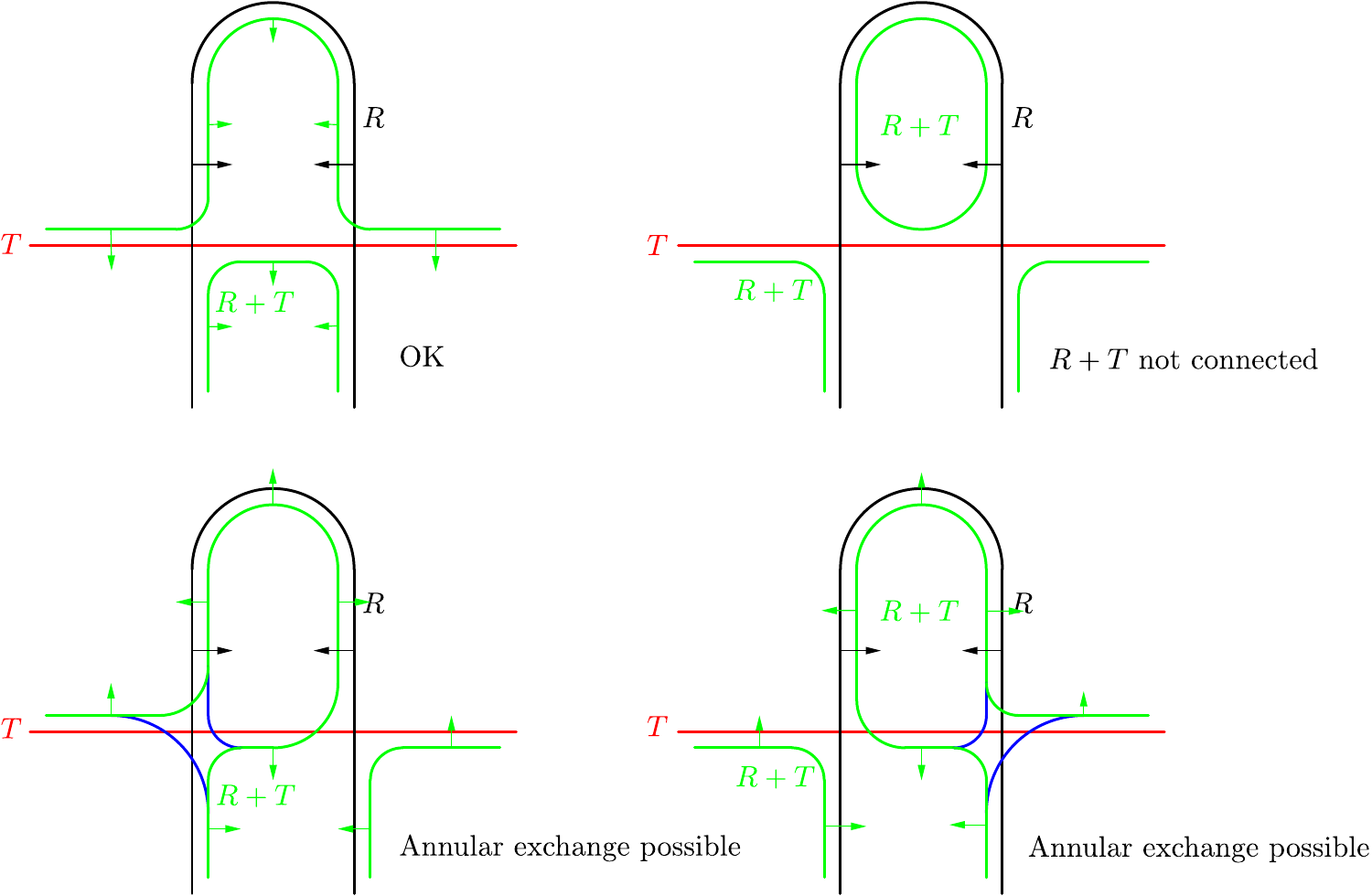}}
    \caption{Resolving a separating annulus in all four possible ways. \label{fig:annulus}}
  \end{figure}  
  
  Given that all components of $R \setminus T$ on the side of $M$ not containing $\partial M$ are separating annuli, we can iterate this argument stating that the entire Haken sum can be resolved this way. But then $R+T$ is disconnected. A contradiction.
  
We conclude that there is no non-orientable normal spanning surface $S$ with $\chi(S)=\chi(S_o)$. This contradicts our hypothesis that $\chi(S_o) = \chi(S_n)$ and the hypothesis that $S_n$ is normal. 
\end{proof} 


\subsection{Crosscap number via arbitrary 0--efficient triangulations}
\label{sec:efficient}

A trade-off in the previous algorithm is that one cannot apply it to arbitrary $0$--efficient or minimal triangulations.

\textbf{Even integral subtree.} 
With respect to the geometric framing $(\meridian_g, \longitude_g),$ construct the Farey tesselation. We claim that the dual 1--skeleton restricted to all ideal triangles that are labelled with the slopes of spanning surfaces is connected. 
Note that these are precisely the ideal triangles with labels of the form $\frac{2k}{1},$ where $k\in \mathbb{Z}.$  
We call this the \textbf{even integral subtree}, and denote it $\mathcal{F}_e.$ This tree is shown in \Cref{fig:evenslopes}.

\begin{figure}
  \centerline{\includegraphics[width=0.5\textwidth]{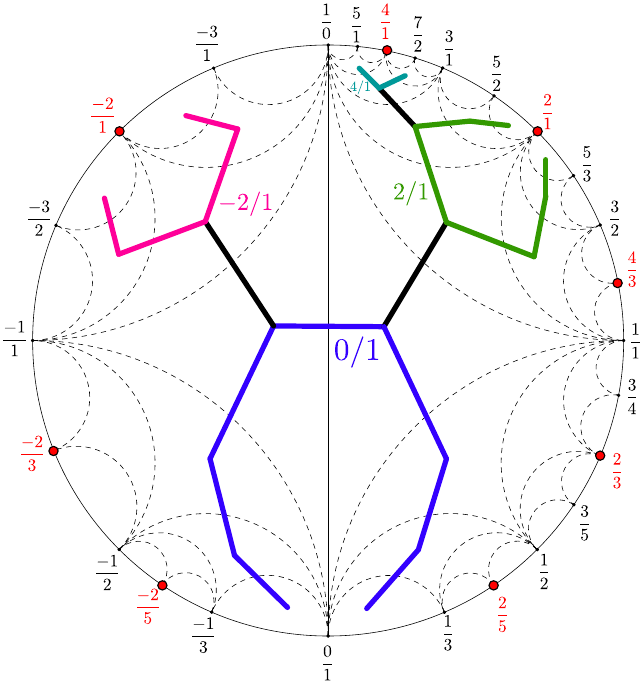}}
  \caption{The even integral subtree $\mathcal{F}_e$ representing spanning slopes in the dual of the Farey tesselation. Every spanning slope is represented by an infinite path of a given colour, paths are connected by a single saddle attachement (black arcs); shown is only a small portion of $\mathcal{F}_e$.\label{fig:evenslopes}}
\end{figure}

First suppose $k>0.$ There is a Farey triangle with vertices $1/0, 2k/1, (2k-1)/1.$
This is since we can act by the element 
$\begin{pmatrix} 1 & 2k \\ 0 & 1 \end{pmatrix}$
on the base triangle $1/0, 0/1, -1/1.$ Then flipping across the ideal edge $[1/0, (2k-1)/1]$ gives
the triangle with vertices $1/0, (2k-1)/1, (2k-2)/1.$ Hence inductively we arrive at the base triangle with even slope 
$0/1.$

Now suppose $k<0.$ Then there is a Farey triangle with vertices $1/0, 2k/1,
(2k+1)/1.$ This is obtained by acting by 
$\begin{pmatrix} 1 & 2k \\ 0 & 1 \end{pmatrix}$
on the triangle $1/0, 0/1, 1/1.$ Again, flipping across the ideal edge $[1/0, (2k+1)/1]$ gives a
triangle with even slope $(2k+2)/1.$ So inductively we arrive at the ideal triangle with vertices $1/0, 0/1, -1/1,$ which shares an edge with the base triangle. This completes the proof that the even integral subtree is connected.

For any slope $\frac{p}{q},$ we define its \textbf{even integral subtree distance}, $d(\frac{p}{q}, \mathcal{F}_e)$ to be the number of edges in the shortest path between $\tau(\frac{p}{q})$ and $\mathcal{F}_e$ that have endpoints in triangles of different slopes.

\begin{proof}[Proof of \Cref{thm:crosscap}]
It follows from the definitions that $\cross(K) \le A$ and $\cross(K) \le B.$ Note that if $S$ is a fundamental non-spanning surface for  $K$ with connected essential boundary, then adding $d(\partial S, \mathcal{F}_e)$ saddles according to the corresponding shortest path in the Farey tesselation gives a non-orientable spanning surface
of Euler characteristic $\chi(S) - d(\partial S, \mathcal{F}_e)$ for $K.$ Hence $\cross(K) \le Z.$ So $\cross(K) \le \min (A, B, Z),$ and we need to show equality.

Let $S_o$ be an orientable spanning surface of maximal Euler characteristic, and let $S_n$ be a non-orientable spanning surface of maximal Euler characteristic. We have $\cross(K) = \min (\ 1-\chi(S_n), \; 2-\chi(S_o) \ ).$

As in the proof of \Cref{thm:crosscap-suitable}, $S_o$ is incompressible and $\partial$--incompressible, and hence may be normalised using isotopies. As before, there is a fundamental surface $G_o$ with $\chi(S_o) \le \chi(G_o)$ and $\partial S_o = \partial G_o$. Hence $G_o$ is also a spanning surface. If $\chi(S_n)<\chi(S_o),$ then $G_o$ must be orientable. This forces $\chi(G_o)=\chi(S_o),$ and we have $c(K)=B.$

Hence assume $\chi(S_n)\ge \chi(S_o).$ In this case $\cross(K) = 1-\chi(S_n) < 2-\chi(S_o)\le B$.

Note that $S_n$ is incompressible, but $S_n$ may not be $\partial$-incompressible. 

Normalisation of $S_n$ may involve a finite number of non-trivial boundary compressions resulting in surfaces that are not spanning surfaces. Each non-trivial boundary compression increases Euler characteristic by one. Suppose $S_n$ normalises to the normal surface $S'_n$, and, topologically, the latter is obtained from the former by performing $k> 0$ non-trivial boundary compressions. Then $\chi(S_n) = \chi(S'_n)-k.$ Now if $S'_n$ is not fundamental, then we obtain a fundamental surface $G_n$ with $\partial G_n = \partial S'_n$ and $\chi(S'_n) \le \chi(G_n).$ The maximality of $\chi(S_n)$ implies that $\chi(S'_n) = \chi(G_n)$ and $k = d(\partial G_n, \mathcal{F}_e)$ since any surface obained by adding saddles to $G_n$ is non-orientable (regardless of whether $G_n$ is orientable or not). Hence 
\[ \cross(K) = 1-\chi(S_n) = 1 - \chi(G_n) + d(\partial G_n, \mathcal{F}_e) \ge Z\]
Hence $\cross(K) = Z.$ 

We may therefore assume that $\cross(K) < \min ( B, Z).$
So $\chi(S_n)\ge \chi(S_o)$ and \emph{every} non-orientable fundamental spanning surface with maximal Euler characteristic $\chi(S_n)$ normalises by isotopies. As above, we obtain a fundamental surface $G_n$ with $\partial G_n = \partial S_n$ and $\chi(S_n) \le \chi(G_n).$

If $\chi(S_n)>\chi(S_o),$ then $G_n$ must be non-orientable. This forces $\chi(G_n)=\chi(S_n),$ and we have $c(K)\ge A,$ which implies $c(K) =  A.$

If $\chi(S_n) = \chi(S_o)$, then $\chi(G_o) = \chi(S_n) = \chi(S_o) = \chi(G_n).$
The proof is continued with \Cref{lem:main_lemma}, where it is shown that there is at least one non-orientable fundamental spanning surface with this maximal Euler characteristic. Hence $\cross(K) = A.$ 
\end{proof}

\begin{remark}
It is known from work by Clark \cite{Clark78CrosscapNumber} that a knot in the 3--sphere has crosscap number zero or one if and only if it is a $(2,2k+1)$-cable of a knot ($k\in \mathbb{Z}$). Hence, one corollary of \Cref{thm:crosscap} is that a given $0$-efficient triangulation of a non-trivial knot complement in the 3--sphere is that of a $(2,2k+1)$-cable of a knot if and only if one of the fundamental surfaces is a M\"obius strip.
\end{remark}


\section{Genus of knots}
\label{sec:knot genus}

We wish to point out that in the setting of this paper, it is not difficult to recover a special case of a more general result of Schubert~\cite{Schubert1961-bestimmung} (which was originally proved in the context of normal surfaces with respect to handle decompositions). Namely, there is an algorithm to determine the genus of a knot using normal surface theory.

\begin{proof}[Proof of \Cref{thm:knot-genus}]
Suppose $S_o$ is an orientable spanning surface of maximal Euler characteristic. Since $\partial M$ is a torus and $S_o$ is orientable, this also implies that $S_o$ is $\partial$--incompressible. We may therefore assume that $S_o$ is normal in $M.$ Amongst all maximal Euler characteristic orientable normal spanning surfaces, we choose a surface $S$ of least weight.

By \cite[Corollary 3.8]{Jaco-decision-2003}, two compatible normal surfaces with non-empty boundary either have the same slope (hence their sum has at least two boundary curves) or complementary boundary curves (hence their sum has boundary containing a trivial curve). Hence only one of the fundamental surface summands, $F$, yielding $S$ has non-empty boundary and $\partial S = \partial F.$

Since Euler characteristic is additive and there are no normal 2--spheres, we have $\chi(S) \le \chi(F)$. 
We also note that the weight of $F$ is stictly less than the weight of $S$ unless $S=F$.

These two observations imply that if $F$ is orientable, then we have $S=F$ and hence $S$ is fundamental.

Hence suppose that $F$ is non-orientable. In this case, $S=F+G$ is a non-trivial Haken sum with $G\neq \emptyset$ a closed normal surface. We give all patches of $F+G$ the induced orientation from $S.$ Since $F$ is non-orientable, there is a curve $\gamma$ in $F \cap G$ where the induced orientations from $S$ on the two patches on $F$ meeting in $\gamma$ do not agree. Since the Haken sum is orientable, it is also the case that the induced orientations on $G$ do not agree. It follows that if one performs an irregular exchange at $\gamma$ and regular exchanges at all other intersection curves, then one obtains an orientable spanning surface with the same Euler characteristic as $S$ but which is not normal. Hence a normalisation of this surface will have lower weight than $S.$ This is a contradiction. Hence $F$ is indeed orientable.
\end{proof}


\section{Quadrilateral space}
\label{sec:quad_space}

We now provide some results that allow us to obtain minimising slopes and crosscap numbers of knots using computations in quadrilateral space. 

We begin with some general observations that will then be adapted under varying hypotheses. We assume that $M$ is an orientable, compact, irreducible 3--manifold with $\partial M$ a single, incompressible torus, and that $\tri$ is a $0$-efficient triangulation of $M$.

For a normal surface $F$, denote $[F]_Q$ the normal $Q$--coordinate of $F.$ If $F$ is not a vertex linking disc, then $[F]_Q\neq 0.$ If $[F]_Q\neq 0$ is not fundamental, then we write $[F]_Q = \sum [F_i]_Q$, where the $[F_i]_Q$ are fundamental normal $Q$--coordinates (with possible repetitions), and each of the corresponding normal surfaces $F_i$ is connected and not a vertex link. Such $F_i$ is called a  \textbf{$\mathbf{Q}$--fundamental surface.} With respect to standard coordinates, $\{ F_i\}$ is a compatible set of normal surfaces since triangle coordinates do not affect compatibility, and each $F_i$ is a fundamental normal surface that is not a vertex linking disc.

Let $D$ be a vertex linking disc. 
Then $F + kD = \sum F_i$ as a Haken sum of normal surfaces. 
The boundary of $F_i$ may consist of essential curves and trivial curves; or only consist of trivial curves; or be empty.
Since the $F_i$ are compatible normal surfaces, \cite[Corollary 3.8]{Jaco-decision-2003} implies that essential curves of at most two different slopes appear in the Haken sum, namely the slope of $\partial F$ and its complementary slope (which depends on the triangulation of the boundary).

Since the triangulation is $0$--efficient, we have $\chi (F_i) \le 0.$ Let $\{ F_i\} = \{ G_j\} \cup \{ H_n\}$ be a partition into two non-empty sets. Then there are normal surfaces $G$ and $H$ with the property that none of their connected components is a vertex linking disc, and integers $k'$ and $k''$ such that 
$G + k'D = \sum G_j$ and $H + k''D = \sum H_n.$
Now 
\[ 
F + kD = \sum F_i = \sum G_j + \sum H_n = G + H + (k'+k'') D
\]
since vertex linking discs can be isotoped to be disjoint from a Haken sum. The same reason implies $k \ge k' + k''.$ Note that
\[ 
\chi(G) = \chi(F) + (k - k' - k'') - \chi(H) \ge \chi(F)
\]
since $\chi(H) = \sum \chi(H_n) \le 0.$ We write $k = k_1 + k_0,$ where $k_0$ is the total number of trivial boundary components of the $F_i.$ 
As in \cite{Jaco-decision-2003}, let $\mu({\partial F})$ be the maximal normal arc coordinate of the slope $\partial F.$
Note that $k_1$ equals $\mu({\partial F})$ times the total number of essential boundary components of complementary slope in the $F_i.$ In particular, either $k_1=0$ or $k_1 \ge \mu({\partial F}).$

Now suppose $\partial F$ is a single essential boundary curve. Since $\partial F + k\partial D$ consists of $k+1= (k_1 +1) + k_0$ curves, \cite[Corollary 3.8]{Jaco-decision-2003} implies that the essential boundary curves of the surfaces $\{ F_i\}$ are $1+\frac{k_1}{\mu({\partial F})}$ connected curves of the slope of $F$ and $\frac{k_1}{\mu({\partial F})}$ connected curves of the complementary slope.

In particular, if $k=0$, then there is exactly one surface, say $F_1$, with $\partial F _1 \neq \emptyset.$ Hence we have $F_1$ fundamental, $\partial F_1 = \partial F$ a connected, essential curve, $\chi(F_1) \ge \chi(F)$ (by choosing $G=F_1$), and since there are no vertex linking discs in the sum, we see that $F_1$ has lower weight than $F$ unless $F=F_1.$ 
Also, $F_1$ has lower $Q$--weight than $F$ unless $F=F_1.$
Here, \textbf{weight} still refers to the number of intersections of a normal surface with the 1--skeleton, and \textbf{$Q$--weight} is the total number of quadrilateral discs.


\subsection{Minimising slopes} 

\begin{proposition}
\label{prop:Q-fundamental}
Let $M$ be an orientable, compact, irreducible 3--manifold with $\partial M$ a single, incompressible torus.
Suppose $\tri$ is a $0$-efficient triangulation of $M$.

Let $S$ be a connected surface of maximal Euler characteristic amongst all properly embedded surfaces in $M$ with boundary a single essential curve on $\partial M.$
Then there is a $Q$--fundamental surface $F$ with $\partial F = \partial S$ and $\chi(F) = \chi(S).$
\end{proposition}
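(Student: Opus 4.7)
The plan is to mirror the proof of \Cref{lem:taut_fundamental}, adapted from standard to quadrilateral coordinates by leveraging the general observations set up at the opening of this section.

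First I would reduce to the case where $S$ is a normal surface of least weight in its isotopy class with the same boundary and Euler characteristic. Maximality of $\chi(S)$ forces $S$ to be incompressible, since any compression would produce a surface of strictly larger Euler characteristic with the same single essential boundary curve. For $\partial$-incompressibility, a $\partial$-compression of $S$ produces a surface $S^*$ of Euler characteristic $\chi(S)+1$ with at most two boundary components on the torus $\partial M$; a short case analysis, according to whether the resulting boundary components are parallel on $\partial M$, carry different slopes, or include an inessential curve that bounds a disc on $\partial M$ (and, by irreducibility of $M$, extends to a disc component of $S^*$ that can be discarded), exhibits in every case a connected subsurface with single essential boundary on $\partial M$ and Euler characteristic strictly greater than $\chi(S)$, contradicting maximality. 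Hence $S$ may be taken normal.

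Next, decompose $[S]_Q = \sum_i [F_i]_Q$ into fundamental elements of quadrilateral space, so each $F_i$ is a connected $Q$-fundamental surface that is not a vertex link, with $\chi(F_i)\le 0$ by $0$-efficiency. By the opening observations of this section, $S + kD = \sum_i F_i$ in standard coordinates for some $k\ge 0$, where $D$ is the unique vertex linking disc. Applying \Cref{cor:JS-key}, the essential boundary components of the $F_i$ lie in at most two slopes: $[\partial S]$ and its complementary slope. Writing $k = k_0 + k_1$ as in the preamble, there are precisely $1 + k_1/\mu(\partial S)$ essential boundary curves of slope $[\partial S]$ and $k_1/\mu(\partial S)$ of the complementary slope among the $F_i$. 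If $k_1 = 0$, then exactly one $F_j$ has non-empty boundary, this boundary is a single essential curve of slope $[\partial S]$ (so $\partial F_j = \partial S$), and additivity of Euler characteristic together with $\chi(F_i)\le 0$ for $i\ne j$ gives
\[
\chi(F_j) \;=\; \chi(S) + k - \sum_{i\neq j}\chi(F_i) \;\geq\; \chi(S),
\]
which is an equality by maximality. Setting $F = F_j$ proves the proposition in this case.

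The main step is therefore to rule out $k_1 > 0$. I would argue by contradiction: if $k_1 > 0$, partition $\{F_i\}$ as in the preamble into the subset $\{G_\ell\}$ carrying slope-$[\partial S]$ essential boundary components (together with any closed or purely trivially-bounded $F_i$) and the complementary subset $\{H_n\}$ carrying complementary-slope essential boundary components, forming $G, H$ with $G + k'D = \sum G_\ell$ and $H + k''D = \sum H_n$. A Euler characteristic/additivity bookkeeping argument exhibits a connected component of $G$ with single essential boundary of slope $[\partial S]$ and Euler characteristic $\ge \chi(S)$ (hence equal by maximality), but of strictly smaller weight than $S$ because $H$ absorbs positive weight corresponding to $k_1 > 0$; this contradicts the minimum-weight choice of $S$. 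The main obstacle is this final weight comparison — carefully controlling the contribution of vertex-linking discs and of trivial boundary curves, and verifying that the extracted component lies in the isotopy class of $S$; a secondary delicacy is the $\partial$-incompressibility argument in the first step, which cannot rely on a distinguished meridian edge on $\tri_\partial$ as in the spanning-surface setting of \Cref{thm:crosscap-suitable}.
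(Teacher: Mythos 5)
Your reduction to a least-weight normal representative and your treatment of the case $k_1=0$ match the paper's argument in substance (including the capping-off of any residual trivial boundary curves and the weight comparison when $k_0=0$). The genuine gap is in your plan for ruling out $k_1>0$, which is both incomplete and aimed at the wrong contradiction. First, if $k_1>0$ then the subset $\{G_\ell\}$ carrying the slope $[\partial S]$ has $1+k_1/\mu(\partial S)>1$ essential boundary curves, so no bookkeeping alone can "exhibit a connected component of $G$ with single essential boundary": you need a construction to get back down to one essential curve. The paper does this by choosing whichever of the two slope classes contains an \emph{odd} number of essential boundary curves (exactly one of $k_1/\mu(\partial S)$ and $1+k_1/\mu(\partial S)$ is odd), then attaching boundary-parallel annuli to tube these parallel curves together in pairs, leaving a single essential boundary curve without changing Euler characteristic; only then does one pass to the component containing that curve and cap off trivial curves. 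Your sketch omits both the parity selection and the annulus-tubing step.

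Second, the contradiction you aim for — a surface in the isotopy class of $S$ of strictly smaller weight — does not work and is not needed. The extracted component may well carry the \emph{complementary} slope rather than $[\partial S]$ (this happens precisely when $k_1/\mu(\partial S)$ is odd), so it need not be isotopic to $S$, and after tubing and capping it is not even normal, so no weight comparison is available. The paper instead contradicts the \emph{maximality of $\chi(S)$}: since $k_0\ge k_0'+k_0''$, $\chi(H)\le 0$ by $0$-efficiency, and $k_1>0$, the additivity computation gives
\[
\chi(G'') \;\ge\; \chi(G)\;=\;\chi(F)+(k_1+k_0-k_0'-k_0'')-\chi(H)\;>\;\chi(F)\;=\;\chi(S),
\]
and because the proposition's maximality is taken over surfaces with a single essential boundary curve of \emph{any} slope, this is already a contradiction regardless of which slope $G''$ carries. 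This is exactly why the statement is phrased with maximality over all slopes rather than a fixed slope (contrast \Cref{lem:taut_fundamental}), and it is the point your proposal misses; the "main obstacle" you identify (the weight comparison) dissolves once the contradiction is routed through Euler characteristic instead.
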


\begin{proof}
Suppose $S$ is a surface of maximal Euler characteristic amongst all properly embedded surfaces with boundary a single essential curve  in $M.$ Then $S$ must be incompressible and $\partial$--incompressible, and hence normalises by isotopies. Amongst all normal surfaces with a single essential boundary component of the same slope as $S$ and the same Euler characteristic as $S$ choose one of least weight. Denote this surface $F$ (noting that $F$ may not be isotopic with $S$) and apply the preliminary observations. In particular, since $F$ is of least weight, either $[F]_Q$ is fundamental, or $k>0.$

Hence suppose $k>0.$ If $\frac{k_1}{\mu({\partial F})}$ is odd, let $\{ G_j \}$ be the subset of surfaces in $\{ F_i\}$ with slope complementary to $\partial F$. If $\frac{k_1}{\mu({\partial F})}$ is even (and hence $1+ \frac{k_1}{\mu({\partial F})}$ is odd), let $\{ G_j\}$ be the subset of surfaces in $\{ F_i\}$ with the same slope as $\partial F$. 
It follows that the surfaces in $\{ G_j\}$ all have the same slope. 
Since some surfaces may have trivial boundary components, we have
$G + k'_0 D = \sum G_j$ and $\partial G$ consists of an odd number of essential curves (and may have some trivial curves) and $k'_0$ is bounded above by the total number of trivial curves in the boundaries of the $G_j.$

In particular, we may attach annuli to pairs of essential boundary components of $G$ such that  (after a small isotopy) we obtain a properly embedded surface $G'$ with a single essential boundary component and $\chi(G') = \chi(G).$
Note that $G'$ may not be connected. Denote $G''$ the component of $G'$ with boundary containing the essential curve. 
Since the triangulation is 0--efficient and no component of $G'$ is a vertex linking disc, each component of $G'$ has non-positive Euler characteristic, and hence $\chi(G'') \ge \chi(G').$
If $G''$ has any boundary components that are trivial, we cap these off with disc, and denote the resulting surface again by $G''.$ This still satisfies $\chi(G'') \ge \chi(G').$

Similarly, $\sum H_n$ has boundary a family of parallel essential curves, and hence $H + k''_0 D = \sum H_n,$ where $k''_0$ is bounded above by the total number of trivial curves in the boundaries of the $H_n.$ We have $k'_0 + k''_0 \le k_0.$
This implies:
\[ 
F + (k_1 + k_0) D = F + kD = \sum F_i = \sum G_j + \sum H_n = G + H + (k'_0 + k''_0) D
\]
If $k_1 > 0,$ then $\chi(G'') \ge \chi(G') = \chi(G) = \chi(F) + (k_1 + k_0 - k'_0 + k''_0) - \chi(H) >\chi(F)= \chi(S),$ contradicting the maximality of the Euler characteristic of $S$ amongst all surface with boundary a single essential curve.

Hence $k_1=0.$ But then there is a unique surface in $\{ F_i\}$ with an essential curve in its boundary. Without loss of generality, assume this is $F_1.$ If $F_1$ only has one boundary component, then
$\chi(F_1) \ge \chi(F)$ and either the weight of $F_1$ is less than that of $F$ (which would be a contradiction) or  $F = F_1$ is $Q$--fundamental. If $F_1$ has more than one boundary component, then the other boundary components are trivial, and hence we may cap them off with discs, obtaining a surface $F'$ with $\partial F' = \partial F$ and $\chi(F') > \chi(F)=\chi(S)$, contradicting the maximality of the Euler characteristic of $S.$
Hence $[F]_Q$ is fundamental.
\end{proof}

We state the following immediate corollary:
\begin{corollary}
\label{cor:slope norm-Q}
Let $M$ be an orientable, compact, irreducible 3--manifold with $\partial M$ a single, incompressible torus. Suppose $\tri$ is a $0$-efficient triangulation of $M$. Then $\alpha$ is a minimising slope for $M$ if and only if there is a 
$Q$--fundamental surface $F$ of $\tri$ with $[\partial F] = \alpha$ and $\chi(F) = - || \; M \; ||.$
\end{corollary}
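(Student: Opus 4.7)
The plan is to derive the corollary directly from \Cref{prop:Q-fundamental} together with the definitions of slope norm and minimising slope. The argument splits into the two directions of the biconditional, and neither requires new ideas beyond what is already in the proposition and the setup from \Cref{sec:algo}.

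For the forward direction, I would start with a minimising even slope $\alpha$. By the definition of $\| \alpha \|$ and \Cref{cor:evenslopes}, there exists a connected properly embedded surface $S$ in $M$ with $[\partial S] = \alpha$ and $-\chi(S) = \| \alpha \| = \| M \|$. Because $\| M \|$ is the minimum of $\| \beta \|$ over all even slopes $\beta$, no properly embedded surface in $M$ with connected essential boundary has Euler characteristic larger than $\chi(S)$. Hence $S$ is a surface of maximal Euler characteristic among all properly embedded surfaces in $M$ with boundary a single essential curve, which is exactly the hypothesis of \Cref{prop:Q-fundamental}. Applying that proposition produces a $Q$--fundamental surface $F$ with $\partial F = \partial S$ and $\chi(F) = \chi(S) = -\| M \|$; in particular $[\partial F] = \alpha$, as required.

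For the reverse direction, suppose $F$ is a $Q$--fundamental surface with $[\partial F] = \alpha$ and $\chi(F) = -\| M \|$. By the definition of a $Q$--fundamental surface, $F$ is connected, and the notation $[\partial F] = \alpha$ tells us that its boundary is a single essential curve of slope $\alpha$. \Cref{cor:evenslopes} then forces $\alpha$ to be an even slope, so $\| \alpha \|$ is defined. The existence of $F$ gives $\| \alpha \| \le -\chi(F) = \| M \|$, while the definition of $\| M \|$ as the minimum of $\| \beta \|$ over even slopes gives $\| \alpha \| \ge \| M \|$. Hence $\| \alpha \| = \| M \|$ and $\alpha$ is a minimising slope.

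The only place where any non-trivial content is needed is the invocation of \Cref{prop:Q-fundamental} in the forward direction; the rest is a definition chase, which is why the statement is labelled as an immediate corollary. The main conceptual point to highlight in writing this up is the observation that any taut surface for a minimising slope is automatically of maximal Euler characteristic among all surfaces with connected essential boundary, so the proposition applies without further modification.
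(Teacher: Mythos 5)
Your proof is correct and follows exactly the route the paper intends: the paper presents this as an immediate consequence of \Cref{prop:Q-fundamental} (mirroring the proof of the analogous standard-coordinate corollary after \Cref{lem:taut_fundamental}), and your key observation that a taut surface for a minimising slope is automatically of maximal Euler characteristic among all surfaces with connected essential boundary is precisely the step that makes the proposition applicable.
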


The above corollary gives an algorithm to compute $ || \; M \; ||$ and the set of all minimising slopes from the $Q$--fundamental solutions. However, in the presence of incompressible and $\partial$--incompressible surfaces at slopes other than the minimising slopes, we only obtain an upper bound on the norm of any slope if only the $Q$--fundamental solutions and not all fundamental solutions are computed.


\subsection{Crosscap number} 

This section gives a proof of \Cref{thm:Q-algo}. We organise the proof in three stages. It follows from the previous section that the crosscap number can be computed from the $Q$--fundamental solutions if a spanning slope is a minimising slope for $M.$ This is immediate in the case where a non-orientable surface achieves the minimising slope (\Cref{cor:Q-quick}), and requires a little more effort when all these surfaces are orientable (\Cref{cor:Q-procedure}). The proof is then completed by showing that we can always compute the crosscap number from the $Q$--fundamental solutions that are spanning surfaces.

\begin{corollary}
\label{cor:Q-quick}
Let $M$ be the exterior of a non-trivial knot $K$ in a closed 3--manifold $N$ with $[K] = 0 \in H_1(N; \mathbb{Z}_2).$ Suppose that $M$ is irreducible and contains no embedded non-separating torus and no embedded Klein bottle. Let $\tri$ be a $0$-efficient triangulation of $M$ and suppose that the coordinates for a meridian for $K$ on the induced triangulation $\tri_\partial$ of $\partial M$ are given. 

Suppose that amongst the $Q$--fundamental surfaces with a single boundary component, the maximal Euler characteristic is achieved by a non-orientable spanning surface $S$ for $K.$ Then $c(K) = 1-\chi(S).$
\end{corollary}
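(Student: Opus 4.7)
The plan is to establish both inequalities separately. The bound $c(K) \le 1 - \chi(S)$ is immediate from the definition of crosscap number, since $S$ is by hypothesis a non-orientable spanning surface for $K$. The real content lies in the reverse inequality $c(K) \ge 1 - \chi(S)$, which amounts to showing that every non-orientable spanning surface $S'$ for $K$ satisfies $\chi(S') \le \chi(S)$.

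To prove this, I would take an arbitrary non-orientable spanning surface $S'$ and let $\alpha'$ denote its (even, spanning) boundary slope. Among all properly embedded connected surfaces in $M$ whose boundary is a single essential curve of slope $\alpha'$, pick one of maximal Euler characteristic; call it $T'$. Since $S'$ itself has connected essential boundary of slope $\alpha'$ (a spanning surface has boundary the knot), it belongs to this collection, so $\chi(T') \ge \chi(S')$.

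Next I would apply \Cref{prop:Q-fundamental} to $T'$: this produces a $Q$--fundamental surface $F$ with $\partial F = \partial T'$ and $\chi(F) = \chi(T')$. Now $F$ is a $Q$--fundamental surface with a single boundary component, and since $\alpha'$ is a spanning slope, $F$ is itself a spanning surface for $K$. By the hypothesis of the corollary, every $Q$--fundamental surface with a single boundary component has Euler characteristic at most $\chi(S)$; in particular $\chi(F) \le \chi(S)$. Chaining the inequalities yields $\chi(S') \le \chi(T') = \chi(F) \le \chi(S)$, which is the desired bound.

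The main (and only real) obstacle is framing the application of \Cref{prop:Q-fundamental} correctly: one must pass from the arbitrary candidate $S'$ to its taut representative $T'$ at the same slope before invoking the proposition, rather than trying to decompose $S'$ directly. Once this setup is in place no further geometric argument is required. In particular, it is irrelevant whether the witness $F$ turns out to be orientable or non-orientable, because the hypothesis bounds $\chi$ uniformly over \emph{all} $Q$--fundamental surfaces with a single boundary component.
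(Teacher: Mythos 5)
Your overall strategy (the upper bound $c(K)\le 1-\chi(S)$ is immediate; for the lower bound reduce to showing every non-orientable spanning surface has Euler characteristic at most $\chi(S)$, and detect a violation via \Cref{prop:Q-fundamental} together with the hypothesis) is the same as the paper's. But there is a genuine gap in how you invoke \Cref{prop:Q-fundamental}. That proposition applies to a surface of maximal Euler characteristic amongst \emph{all} properly embedded surfaces with boundary a single essential curve, i.e.\ the maximum is taken over all slopes simultaneously. Your surface $T'$ is only maximal amongst surfaces whose boundary has the fixed slope $\alpha'$, and for such a surface the proposition gives nothing: in its proof, the case $k_1>0$ is excluded by producing a surface of strictly larger Euler characteristic whose single boundary curve has the \emph{complementary} slope, which contradicts global maximality but is perfectly consistent with maximality at the slope $\alpha'$. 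The paper flags exactly this limitation in the remark following \Cref{cor:slope norm-Q}: at a slope that is not a minimising slope, the $Q$--fundamental surfaces in general only yield an upper bound on the norm, i.e.\ a taut surface at such a slope need not be realised by a $Q$--fundamental surface of the same slope and Euler characteristic. So the step ``$\chi(T')=\chi(F)$ for some $Q$--fundamental $F$'' in your chain is unjustified as written.

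The repair is short and is in effect what the paper does. Let $x^{*}$ be the maximal Euler characteristic over all properly embedded connected surfaces in $M$ with boundary a single essential curve (over all slopes); clearly $x^{*}\ge\chi(S)$. If $x^{*}>\chi(S)$, apply \Cref{prop:Q-fundamental} to a surface realising $x^{*}$ to obtain a $Q$--fundamental surface with a single (essential) boundary component and Euler characteristic $x^{*}>\chi(S)$, contradicting the hypothesis that $S$ maximises Euler characteristic among all such $Q$--fundamental surfaces. Hence $x^{*}=\chi(S)$, so no spanning surface --- orientable or not --- has Euler characteristic exceeding $\chi(S)$, and therefore $c(K)=\min\bigl(1-\chi(S_n),\,2-\chi(S_o)\bigr)=1-\chi(S)$.
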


\begin{proof}
Suppose $S_o$ is an orientable spanning surface of maximal Euler characteristic, and $S_n$ is a non-orientable spanning surface of maximal Euler characteristic. Then $c(K) = \min (\ 1-\chi(S_n), 2-\chi(S_o) \ ).$ 

It follows from \Cref{prop:Q-fundamental} that there is no spanning surface in $M$ of larger Euler characteristic than the surface $S$ in the hypothesis. Hence $\chi(S_o) \le \chi(S) = \chi(S_n),$ and therefore $c(K) = 1 - \chi(S).$ 
\end{proof}

In order to break up the proof of our main theorem, we offer the following improvement to the previous corollary
in the context of a 0--efficient and suitable triangulation. This result is an auxiliary step towards our main result \Cref{thm:Q-algo}.

\begin{proposition}
\label{cor:Q-procedure}
Let $M$ be the exterior of a non-trivial knot $K$ in a closed 3--manifold $N$ with $[K] = 0 \in H_1(N; \mathbb{Z}_2).$ Suppose that $M$ is irreducible and contains no embedded non-separating torus and no embedded Klein bottle. Let $\tri$ be a $0$-efficient suitable triangulation of $M.$ 

Suppose that amongst the $Q$--fundamental surfaces with a single boundary component, the maximal Euler characteristic is achieved by a spanning surface $S$ for $K.$ 
Then $\cross(K)=\min (A', B'),$ where
\begin{itemize}
\item $A' = \min \{ \; 1 - \chi(S) \; \mid S \text{ is a non-orientable $Q$--fundamental spanning surface for } K \; \}$
\item $B' = \min \{ \; 2 - \chi(S) \; \mid S \text{ is an orientable $Q$--fundamental spanning surface for } K \; \}$
\end{itemize}
\end{proposition}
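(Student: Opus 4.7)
The plan is to write $x = \chi(S)$ and, letting $S_o, S_n$ denote orientable and non-orientable spanning surfaces of maximal Euler characteristic respectively, to observe via \Cref{prop:Q-fundamental} (applied at the boundary slope of $S$) that $x$ is in fact the maximum Euler characteristic over \emph{all} properly embedded surfaces in $M$ with a single essential boundary curve. In particular $\chi(S_o), \chi(S_n) \le x$, and since $S$ is itself a spanning surface, $x = \max(\chi(S_o), \chi(S_n))$. The inequality $\min(A', B') \ge c(K) = \min(1 - \chi(S_n), 2 - \chi(S_o))$ is automatic from the definitions, since any $Q$--fundamental spanning surface has Euler characteristic at most $\chi(S_n)$ or $\chi(S_o)$ according to its orientability.

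For the reverse direction I would split into cases. If $\chi(S_n) > \chi(S_o)$ then $S$ must be non-orientable (else $\chi(S_o) \ge \chi(S) = x$), and so $S$ itself witnesses $A' \le 1 - x = c(K)$. The symmetric case $\chi(S_n) < \chi(S_o)$ forces $S$ orientable and yields $B' \le 2 - x = c(K)$. In the remaining case $\chi(S_n) = \chi(S_o) = x$ with $S$ non-orientable, $S$ again witnesses $A' \le 1 - x$. The main obstacle is therefore the subcase $\chi(S_n) = \chi(S_o) = x$ with $S$ orientable, where a non-orientable $Q$--fundamental spanning surface of Euler characteristic $x$ must be exhibited even though the $Q$--fundamental maximum is realised by an orientable surface.

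To handle this subcase I would invoke \Cref{lem:main_lemma}. The hypothesis that $\tri$ is $0$-efficient suitable means in particular that $\tri$ has no normal $2$-spheres and is therefore efficient suitable, meeting the hypotheses of \Cref{thm:crosscap-suitable}. Applying \Cref{lem:main_lemma} in that setting produces a standard-fundamental non-orientable spanning surface $F$ with $\chi(F) = x$. It remains to upgrade $F$ from fundamental to $Q$--fundamental, which is the final and most delicate step.

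Suppose for contradiction that $[F]_Q$ is not $Q$--fundamental, so $[F]_Q = \sum_i [F_i]_Q$ non-trivially with each $F_i$ a $Q$--fundamental surface. Any two normal surfaces sharing a $Q$--coordinate differ by copies of the unique vertex linking disc $D$, so for some $k \in \mathbb{Z}$ we have $F + kD = \sum_i F_i$ in standard coordinates. If $k \le 0$ this directly exhibits $F$ as a non-trivial standard Haken sum of the $F_i$ and $|k|$ copies of $D$, contradicting standard-fundamentality of $F$; hence $k \ge 1$. I would then follow the essential-boundary analysis from the proof of \Cref{prop:Q-fundamental}: if the complementary-slope count $k_1$ is positive, the construction given there yields a single-essential-boundary surface $G''$ with $\chi(G'') > x$, contradicting the maximality of $x$. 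Hence $k_1 = 0$, so exactly one summand $F_1$ carries the essential boundary while all other $F_i$ are closed or have only trivial boundary. Using $0$-efficiency (no normal spheres or discs), the absence of embedded $\mathbb{R}P^2$ in $M$ (as in the proof of \Cref{lem:main_lemma}), and the exclusion of Klein bottles, each $\chi(F_i) \le 0$. Combined with $\chi(F_1) \le x$, this gives $x + k = \sum_i \chi(F_i) \le \chi(F_1) \le x$, contradicting $k \ge 1$. Hence $F$ is $Q$--fundamental and $A' \le 1 - x = c(K)$, completing the proof.
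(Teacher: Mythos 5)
Your proof is correct, and while the reduction to the subcase $\chi(S_n)=\chi(S_o)$ with an orientable $Q$--fundamental witness parallels the paper, your treatment of that subcase is genuinely different. The paper works with a least-weight normal representative of $S_n$, runs the $k_1>0$ versus $k_1=0$ analysis on it to obtain $S_n = F_1 + \sum_{i\ge 2}F_i$ with $F_1$ a $Q$--fundamental spanning surface and the remaining summands tori, and then, if $F_1$ is orientable, observes that the \emph{proof} of \Cref{lem:main_lemma} applies verbatim to this not-necessarily-fundamental Haken sum. You instead invoke \Cref{lem:main_lemma} as a black box --- which is legitimate, since a $0$-efficient suitable triangulation has no normal $2$--spheres and is therefore efficient suitable --- to obtain a standard-fundamental non-orientable spanning surface $F$ with $\chi(F)=x$, and then prove a clean upgrade: under the proposition's hypothesis (which, via \Cref{prop:Q-fundamental}, makes $x$ the maximum Euler characteristic over \emph{all} surfaces with a single essential boundary curve), any standard-fundamental spanning surface realising $x$ is automatically $Q$--fundamental. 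Your upgrade argument is sound: $k\le 0$ contradicts standard fundamentality; $k\ge 1$ with $k_1>0$ produces a single-essential-boundary surface of Euler characteristic exceeding $x$; and $k\ge 1$ with $k_1=0$ gives $x+k=\sum_i\chi(F_i)\le\chi(F_1)\le x$, using that no summand is a sphere, non-vertex-linking disc, or projective plane. Your route is more modular (it never reopens the proof of \Cref{lem:main_lemma} and needs no least-weight choice in the final step), at the cost of not exhibiting the explicit decomposition $S_n = F_1 + \sum\text{tori}$ that the paper's argument produces along the way. One cosmetic point: \Cref{prop:Q-fundamental} should be applied to a global maximiser among all surfaces with a single essential boundary curve (whatever its slope), not ``at the boundary slope of $S$''; the conclusion you draw from it is nevertheless exactly right.
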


\begin{proof}
Suppose $S_o$ is an orientable spanning surface of maximal Euler characteristic, and $S_n$ is a non-orientable spanning surface of maximal Euler characteristic. The surface $S_o$ (if it exists) is isotopic to a normal surface. Since the triangulation is suitable, the same is true for $S_n.$ We may therefore assume that $S_n$ and $S_o$ are least weight normal representatives amongst all normal spanning surfaces in the same orientability class and with maximal Euler characteristic.

In the preliminary observation, let $F$ be a normal spanning surface of maximal Euler characteristic. First suppose $k_1>0.$ 
We let $\{ G_j\}$ be the subset of surfaces in $\{ F_i\}$ with boundary curves of the same slope as $F$, and $\{ H_n\}$ be the complementary set. As in the proof of \Cref{prop:Q-fundamental}, we write 
$G + k'_0 D = \sum G_j$, where $k'_0$ is bounded above by the total number of trivial curves in the boundaries of the $G_j$ and $G$ does not contain any vertex linking discs.
Hence $\partial G$ consists of $\frac{k_1}{\mu({\partial F})}+1$ essential curves and some trivial curves.

Similarly, $\sum H_n$ has boundary a family of $\frac{k_1}{\mu({\partial F})}$ parallel essential curves of complementary slope and some trivial curves, and we write $H + k''_0 D= \sum H_n,$ where no component of $H$ is a vertex linking disc and 
$k''_0$ is bounded above by the total number of trivial curves in the boundaries of the $H_n.$
This implies:
\[ 
F + (k_1 + k_0) D = \sum F_i = \sum G_j + \sum H_n = G + H + (k'_0 + k''_0) D
\]
Since $k_0\ge k'_0 + k''_0$ and we assume $k_1 >0,$ we have
 $\chi(G) = \chi(F) + (k_1 + k_0- k'_0 - k''_0) - \chi(H) > \chi(F),$ and similarly $\chi(H) >\chi(F).$ Now either $G$ or $H$ has an odd number of essential boundary components, and hence a connected component $X$ with an odd number of essential boundary components. By capping off trivial boundary components of $X$ by discs and connecting essential boundary components in pairs, we obtain a properly embedded surface $X'$ with a boundary a single essential simple closed curve, and $\chi(X') \ge \chi(X) > \chi(S).$ It follows that $X'$ is not a spanning surface since $F$ is a maximal Euler characteristic spanning surface. For future reference, we note that 
 \[
 \chi(X') \ge \chi(X) \ge \chi(H) \ge \chi(F) + (k_1 + k_0- k'_0 - k''_0) - \chi(G) \ge  \chi(F) + k_1 - \chi(G) \ge \chi(F) + \mu({\partial F}) > \chi(F)
 \]   
and $\partial X'$ is a single curve of complementary slope to $\partial F.$ We will analyse this in detail in the proof of \Cref{thm:Q-algo}.
 
Note that \Cref{prop:Q-fundamental} implies that there is a $Q$--fundamental surface $Y$ of larger Euler characteristic than $\chi(F)$ and with a single boundary curve (which is possibly not of complementary slope). The existence of $Y$ contradicts our hypothesis that amongst the $Q$--fundamental surfaces with a single boundary component, the maximal Euler characteristic is achieved by a spanning surface $S$ for $K.$ 

Hence $k_1=0$ and
\[ 
F + k_0 D  = \sum F_i
\]
We may assume that the boundary curves of the $F_i$ are pairwise disjoint since they are a single essential curve and a finite number of trivial curves.
There is exactly one surface, say $F_1$, with $\partial F \subseteq \partial F _1$. 
Now 
\[
\chi(F_1) = \chi(F) + k_0  - \sum_{i\ge 2} F_i \ge \chi(F) + k_0
\]
If $\partial F = \partial F _1,$ then $F_1$ is a spanning surface. Since $F$ is a spanning surface of maximal Euler characteristic, this implies $k_0=0$ and $\chi(F_1) = \chi(F).$
If $\partial F_1$ also contains trivial curves, then we may cap these off with discs to obtain a surface spanning surface $F'_1$ with $\chi(F'_1) > \chi(F_1) \ge \chi(F),$ which is a contradiction. 

Hence $k_0=0$ and we have $\partial F_1 = \partial F$ and $\chi(F_1) = \chi(F)$. Hence for every normal spanning surface $F$ of maximal Euler characteristic, we have 
\[ 
F  = \sum F_i
\]
where the $F_i$ are $Q$--fundamental and, without loss of generality, $F_1$ is a normal spanning surface of maximal Euler characteristic.  In particular, for each $i\ge 2,$ we have $\chi(F_i) = 0$ and $\partial F_i = \emptyset.$

If $\chi(S_n) > \chi(S_o),$ then $F_1$ is non-orientable and we have 
$c(K) = 1 - \chi(F_1) = A'< B'.$

Similarly, if $\chi(S_o) > \chi(S_n),$ then $F_1$ is orientable and we have $c(K) = 2 - \chi(F_1) = B' \le A'.$

Hence suppose $\chi(S_o) = \chi(S_n),$ and let $F = S_n.$ Again, if $F_1$ is non-orientable, then $c(K) = 1 - \chi(F_1)=A'.$ Hence suppose that amongst all $Q$--fundamental spanning surfaces, there is no non-orientable surface with Euler characteristic equal to $\chi(S_n).$ In particular, 
\[S_n = F_1 + \sum_{i\ge 2} F_i\]
where $F_1$ is orientable and $\sum_{i\ge 2} F_i$ is a closed surface of Euler characteristic zero, and hence a union of separating tori. We are therefore in the setting of the proof of Lemma~\ref{lem:main_lemma}. The arguments in the proof of that lemma only hinge on $S_n$ being of least weight and equalling a Haken sum of the form $F_1 + \sum_{i\ge 2} F_i,$ but make no use of the fact whether or not the $F_i$ are fundamental.
Hence we obtain a contradiction and there must be a non-orientable $Q$--fundamental spanning surface $F'$ with Euler characteristic equal to $\chi(S_n).$ Hence $c(K) = 1 - \chi(F')=A'$ and we are done.
\end{proof}

\begin{proof}[Proof of \Cref{thm:Q-algo}]
There is only one place in the proof of \Cref{cor:Q-procedure}, where the hypothesis was used that 
amongst the $Q$--fundamental surfaces with a single boundary component, the maximal Euler characteristic is achieved by a spanning surface $S$ for $K.$ 

Hence suppose $F$ is a spanning surface of maximal Euler characteristic, and that there is a non-spanning surface $X'$
with a single boundary curve of complementary slope to $\partial F$ and satisfying  
 \[
 \chi(X') \ge  \chi(F) + k_1 \ge \chi(F) + \mu({\partial F}) 
 \]   
Let $\gamma = \partial F$ and $\gamma^\perp = \partial X'.$ 
 Since $F$ is a spanning surface of maximal Euler characteristic, we have
 \[
 \chi(F) \ge \chi(X') - d(\gamma^\perp, \mathcal{F}_e) 
 \]
 Hence
 \[
  \chi(F) + d(\gamma^\perp, \mathcal{F}_e) \ge \chi(X')  \ge \chi(F) + \mu(\gamma)
 \]
 and so
 \begin{equation}
 \label{eq:weight-distance}
 d(\gamma^\perp, \mathcal{F}_e)  \ge \mu(\gamma)
 \end{equation}
 This reduces our proof to a calculation in the Farey tesselation, with the aim of obtaining a contradiction to the above inequality. The boundary slope of $F$ is $\gamma = \meridian^{2m}_g \longitude_g$ for some $m\in \mathbb{Z}.$ The complementary slope $\gamma^\perp$ depends on the boundary pattern of the triangulation of $\partial M.$ 
 
 \begin{figure}
  \centerline{\includegraphics[width=0.35\textwidth]{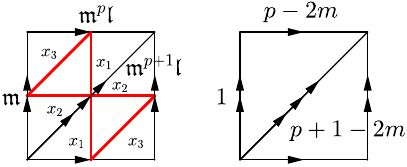}}
  \caption{The boundary pattern and arc coordinates; and signed edge weights of $\gamma$
    \label{fig:boundary_pattern}}
\end{figure}

There is $p\ge 0,$ such that the oriented boundary edges represent the classes $\meridian_g, \meridian_g^{p}\longitude_g,  \meridian_g^{p+1}\longitude_g.$ Hence the signed edge weights of $\gamma$ with the three edges are:
\[
\langle \meridian_g, \gamma \rangle = 1, \quad 
\langle \meridian_g^{p}\longitude_g, \gamma \rangle = p-2m, \quad
\langle \meridian_g^{p+1}\longitude_g, \gamma\rangle = p+1-2m
\]
This determines the signed edge weights of $\gamma$ with respect to the framing, as shown in \Cref{fig:boundary_pattern}.
Using the convention from \Cref{fig:normalcurves}, we can compute the 
 normal arc coordinate of $\gamma$ from this information. We then compute the normal arc coordinate of $\gamma^\perp,$ and hence the slope of $\gamma^\perp$ with respect to our framing. Since $\gamma^\perp$ is not a spanning slope, we show below that $\frac{p}{1} < \gamma^\perp < \frac{p+1}{1}$. Now exactly one of $\frac{p}{1}$ or $\frac{p+1}{1}$ is a spanning slope. This implies that we can compute $d(\gamma^\perp, \mathcal{F}_e)$ as the saddle distance of $\gamma^\perp$ to this spanning slope. Our proof is completed by showing that in each case, \Cref{eq:weight-distance} cannot be satisfied.
 
We remark that at this point, one could change the framing to simplify some of the notation, but we choose not to, as it does not simplify the argument.
 
\textbf{Case 1:} First suppose that $p-2m \ge 0.$ Then the normal arc coordinate of $\gamma$ is $(p-2m, 1, 0)$, and so $\mu(\gamma) = \max(1, p-2m).$ 

\begin{figure}
  \centerline{\includegraphics[width=\textwidth]{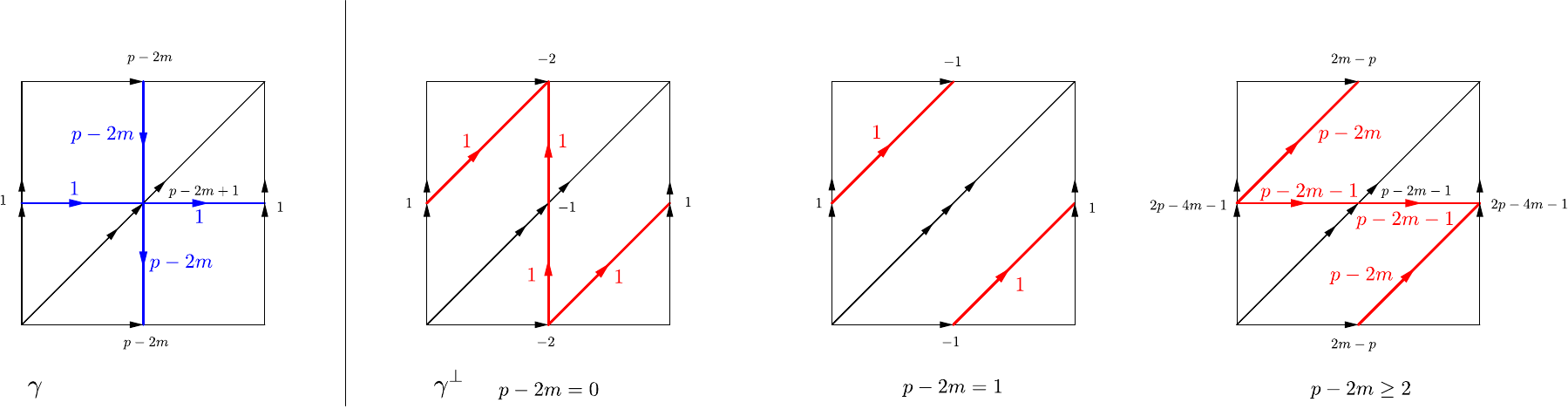}}
  \caption{Normal coordinate of $\gamma$ and its complementary curve in the first case
    \label{fig:case1complementary}}
\end{figure}

If $p-2m=0,$ then the complementary slope has normal arc coordinate $(1, 0, 1),$ and hence
$\gamma^\perp = \meridian_g^{p+2} \longitude_g$ is a spanning slope. This is a contradiction.

If $p-2m=1,$ then the complementary slope has normal arc coordinate $(0, 0, 1),$ and hence
$\gamma^\perp = \meridian^{p+1}_g \longitude_g$, which again contradicts $\gamma^\perp$ not being the slope of a spanning surface.

Hence $p-2m>1$ and so  $\mu(\gamma) = p-2m.$ Then the complementary slope has normal arc coordinate $(0, p-2m-1, p-2m),$ and hence satisfies
\[
\langle \meridian_g, \gamma^{\perp} \rangle = 2p-4m-1, \quad 
\langle \meridian_g^{p}\longitude_g, \gamma^{\perp} \rangle = 2m-p, \quad
\langle \meridian_g^{p+1}\longitude_g, \gamma^{\perp}\rangle = p-2m-1
\]
and so (switching to additive notation) we have:
\[
\gamma^\perp =  (2p^2-2m(1+2p)) \; \meridian_g \; + \; (2p-1-4m)\; \longitude_g
\]
To determine the distance to the even slope tree, we flesh out a part of the Farey tesselation. 
The continued fraction expansion determines a path of edges to $\gamma^\perp$ in the Farey tesselation. We compute:
\[
\frac{2p^2-2m(1+2p)}{2p-1-4m} = p+\cfrac{1}{2+\cfrac{1}{2m-p}} = p + [\; 2,\; 2m-p\;]
\]
Note that, since $\frac{p}{1} \oplus \frac{p+1}{1} = \frac{2p+1}{2}$ and
$\det \begin{pmatrix} p & 2p+1 \\  1 & 2 \end{pmatrix} = -1$, there is a triangle $\tau$ with vertices $\frac{p}{1}, \frac{2p+1}{2}$ and $\frac{p+1}{1}.$
Let
\[
\gamma_j = \frac{p+1}{1} \oplus j \; \frac{2p+1}{2} 
\]
where $j \in \{0, \ldots, j_0\}$ and $j_0 = p-2m-2.$
Note that 
\[
\frac{p+1}{1}= \gamma_0 > \gamma_1 > \ldots  > \gamma_{j_0} > \gamma^\perp > \frac{2p+1}{2} > \frac{p}{1}
\]
We have 
\[
\gamma_{j_0} = \frac{2p^2-(2m+1)(1+2p)}{2p-3-4m}.
\]
Since $\det \begin{pmatrix} 2p^2-(2m+1)(1+2p) & 2p+1 \\ 2p-3-4m & 2 \end{pmatrix} = 1$
and $\gamma_{j_0} \oplus \frac{2p+1}{2} = \gamma^\perp,$
there is a triangle $\tau'$ in the Farey triangulation with vertices $\gamma_{j_0}, \gamma^\perp$ and $\frac{2p+1}{2}.$ It follows from the expression for $\gamma_j,$ that there are $j_0 = p-2m-2$ triangles between the  triangles $\tau$ and $\tau'$ with pivot around the common vertex $\frac{2p+1}{2}.$ 

\begin{figure}
  \centerline{\includegraphics[width=0.45\textwidth]{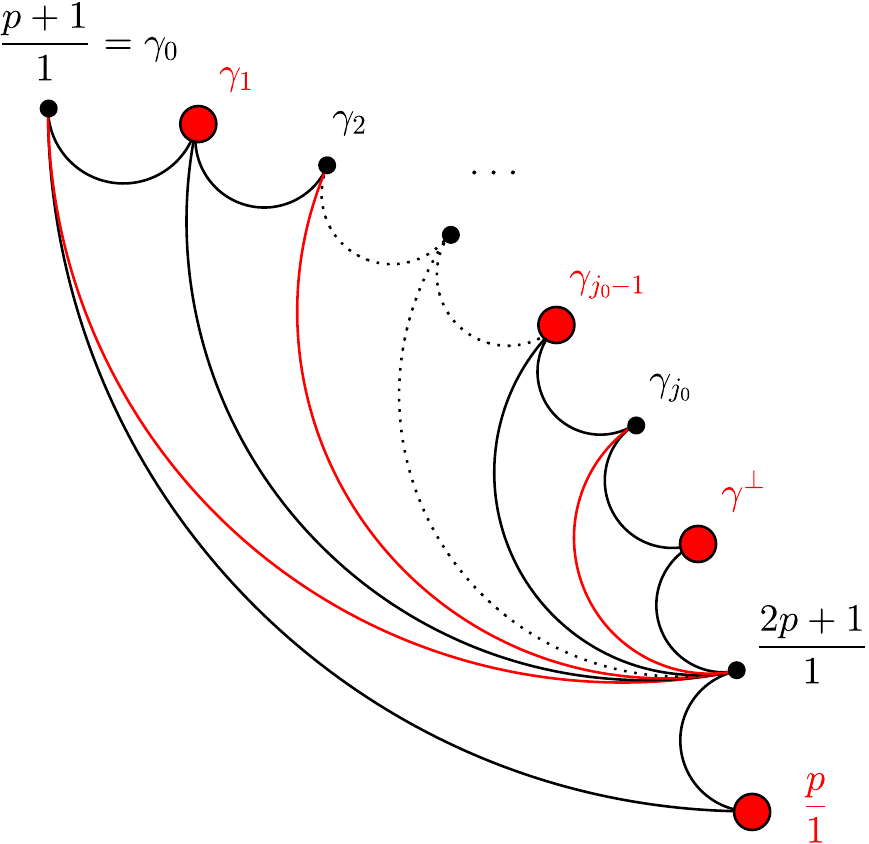} \,\,\, \includegraphics[width=0.45\textwidth]{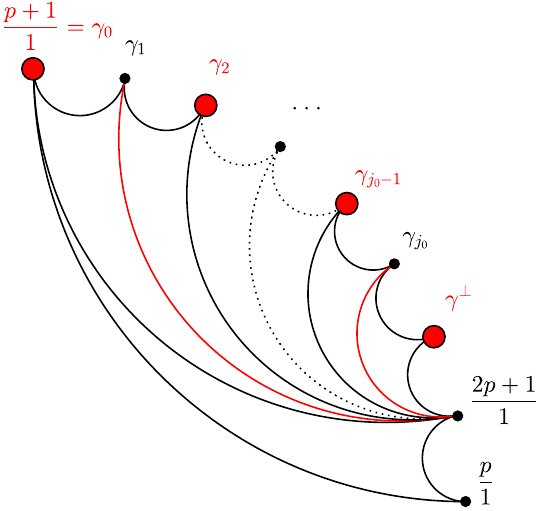}}
  \caption{Relevant part of the Farey tesselation in the first case (not drawn to scale). Even slopes and arcs adding saddles are marked in red. Left: $p$ is even, right: $p$ is odd.
    \label{fig:case1farey}}
\end{figure}

If $p$ is even, we have $d(\gamma^\perp, \mathcal{F}_e) = \frac{j_0}{2}+1= \frac{p-2m}{2}.$ Hence 
\[
 \frac{p-2m}{2} = d(\gamma^\perp, \mathcal{F}_e)  \ge \mu(\gamma) = p-2m
 \]
This is impossible since $p-2m>1.$

If $p$ is odd, we have $d(\gamma^\perp, \mathcal{F}_e) = \frac{j_0-1}{2}+1 = \frac{p-2m-1}{2}.$ Hence
\[
 \frac{p-2m-1}{2} = d(\gamma^\perp, \mathcal{F}_e)  \ge \mu(\gamma) = p-2m
 \]
This is also impossible since $p-2m>1.$

\textbf{Case 2:} Hence suppose that $p-2m < 0;$ equivalently $2m-p \ge 1.$ Then the normal arc coordinate of $\gamma$ is $(2m-p-1, 0, 1),$ and so $\mu(\gamma) = \max(1, 2m-p-1).$

\begin{figure}
  \centerline{\includegraphics[width=\textwidth]{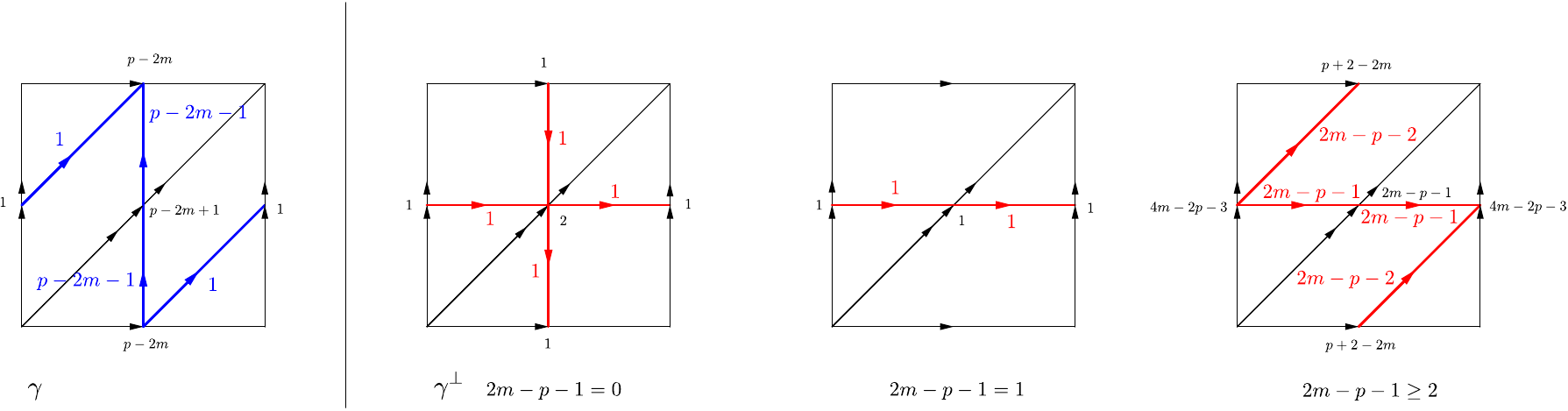}}
  \caption{Normal coordinate of $\gamma$ and its complementary curve in the second case
    \label{fig:case2complementary}}
\end{figure}

If $2m-p =1,$ then $\gamma^\perp = \meridian^{p+1}_g \longitude_g$ is a spanning slope. This is a contradiction.

If $2m-p =2,$ then $\gamma^\perp = \meridian^{p}_g \longitude_g$ is a spanning slope. This is a contradiction.

Hence $2m-p \ge 3$ and so $\mu(\gamma) = 2m-p-1.$ Then the complementary slope has normal arc coordinate $(0, 2m-p-1, 2m-p-2),$ and hence we have
\[
\langle \meridian_g, \gamma^{\perp} \rangle = 4m-2p-3, \quad 
\langle \meridian_g^{p}\longitude_g, \gamma^{\perp} \rangle = p+2-2m, \quad
\langle \meridian_g^{p+1}\longitude_g, \gamma^{\perp}\rangle = 2m-p-1
\]
This gives (again shown in additive notation):
\[
\gamma^\perp = (2m(1+2p)-2(p+1)^2) \; \meridian_g \; + \;  (4m-2p-3) \; \longitude_g
\]
Now
\[
\gamma^\perp  = p+1+\cfrac{1}{-2+\cfrac{1}{2m-p-1}} = p+1 + [\; -2,\; 2m-p-1\;]
\]
Let 
\[
\gamma_j  = \frac{p}{1} \oplus j \; \frac{2p+1}{2} 
\]
for $j \in \{0, \ldots, j_0\},$ where $j_0 = 2m-p-3.$
In particular, 
\[
\gamma_{j_0}  = \frac{4mp+2m-6p-2p^2-3}{4m-2p-5} 
\]
and we have  
\[
\frac{p+1}{1} > \frac{2p+1}{2} > \gamma^\perp > \gamma_{j_0} > \ldots > \gamma_2 > \gamma_1 >  \gamma_0 = \frac{p}{1}
\]
We observe that $\gamma^\perp = \gamma_{j_0} \oplus  \frac{2p+1}{2},$ and that 
$\det \begin{pmatrix} 4mp+2m-6p-2p^2-3 & 2p+1 \\ 4m-2p-5 & 2 \end{pmatrix} = -1,$ and so there is a triangle in the Farey tesselation with vertices $\gamma^\perp, \gamma_{j_0}$ and $\frac{2p+1}{2}.$ As above, this allows us to compute the slope distance of $\gamma^\perp$ to the nearest spanning slope from the triangles pivoting about $\frac{2p+1}{2}.$

\begin{figure}
  \centerline{\includegraphics[width=0.45\textwidth]{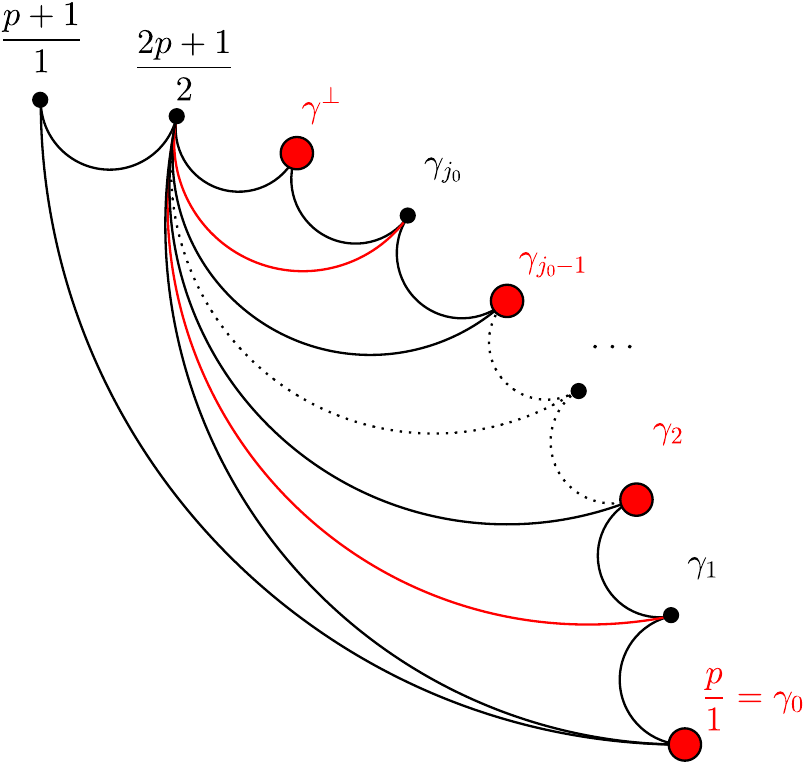} \,\,\, \includegraphics[width=0.45\textwidth]{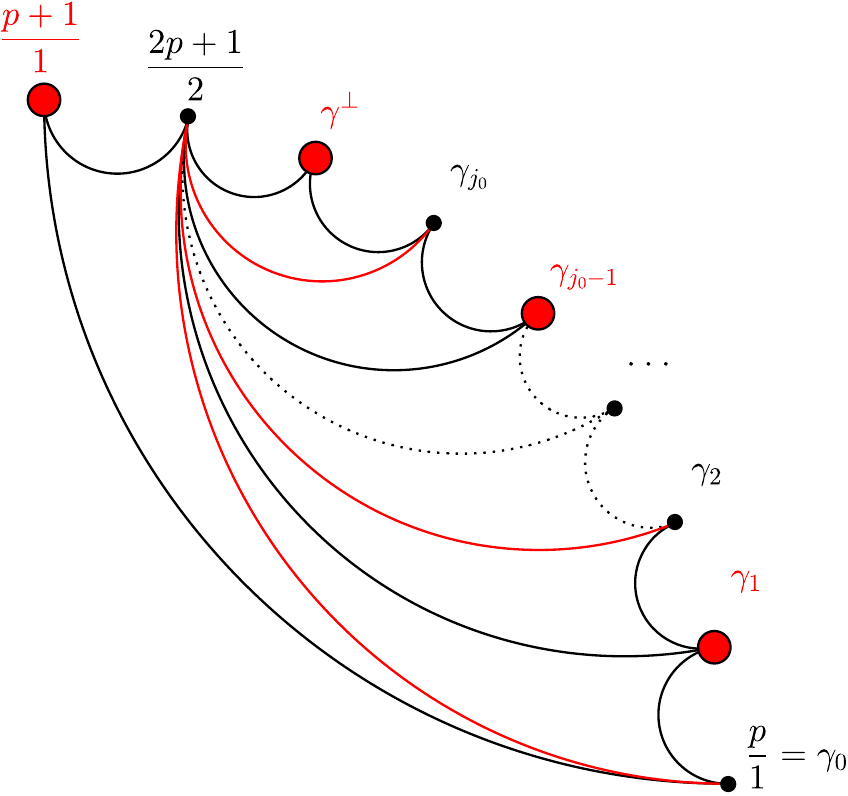}}
  \caption{Relevant part of the Farey tesselation in the second case (not drawn to scale). Even slopes and arcs adding saddles are marked in red. Left: $p$ is even, right: $p$ is odd.
    \label{fig:case2farey}}
\end{figure}

If $p$ is even, we have $d(\gamma^\perp, \mathcal{F}_e) = \frac{j_0-1}{2} + 1 = \frac{2m-p-2}{2}.$ Hence 
\[
 \frac{2m-p-2}{2} = d(\gamma^\perp, \mathcal{F}_e)  \ge \mu(\gamma) = 2m-p-1
 \]
This is impossible since $2m-p \ge 3.$

If $p$ is odd, we have $d(\gamma^\perp, \mathcal{F}_e) = \frac{j_0}{2} + 1 = \frac{2m-p-1}{2}.$ Hence
\[
 \frac{2m-p-1}{2} = d(\gamma^\perp, \mathcal{F}_e)  \ge \mu(\gamma) = 2m-p-1
 \]
This is also impossible since $2m-p \ge 3.$
 
Since in each case, the existence of $X'$ with complementary slope to $\partial F$ gives a contradition to the maximality of $\chi(F)$ amongst all spanning surfaces, this completes the proof.
\end{proof}


\section{Implementation and computational results}
\label{sec:computation}

According to the \texttt{KnotInfo} database \cite{knotinfo}, crosscap numbers are known for all knots with fewer than $10$ crossings. But there are five knots with ten, $96$ knots with $11$, and $668$ knots with $12$ crossings for which only bounds have been known for the crosscap number.

Here we present $196$ crosscap numbers of knots, for which crosscap numbers were previously unknown. This includes all five such $10$-crossing knots, $45$ of the missing $96$ crosscap numbers for $11$-crossing knots, and $146$ of the missing $668$ crosscap numbers for $12$-crossing knots. As a result, crosscap numbers for all knots up to ten crossings are now known.

\subsection{Implementation}
\label{sec:Implementation}

Our implementation uses out-of-the-box Regina functions. It is based on \Cref{cor:Q-procedure}, rather than the stronger \Cref{thm:Q-algo}, because we were also interested in the Euler characteristic of non-spanning surfaces with connected boundary.

\begin{algorithm}
\label{algo:crosscapno-Q}
Compute crosscap numbers using $Q$--coordinates and \Cref{cor:Q-procedure}.
\begin{algorithmic}
  \State {\sc Input:}
  \State 1. $\tri$ $0$--efficient suitable triangulation of $M$ with boundary $\tri_\partial$
  \State 2. $\meridian$ meridian of $K$ represented by an edge of $\tri_\partial$
  \State
  \State Compute $S_0$, the set of $Q$--fundamental surfaces of $\tri$
  \State Compute $S_1 = \{S\in S_0 \mid \partial S \neq \emptyset, \textrm{ connected, non-trivial} \}$, $Q$--fundamental surfaces with single essential boundary component
  \State Compute $B' = \min \{ 2- \chi(S) \mid S \in S_1 \textrm{ orientable}, |\partial S \cap \meridian | = 1 \}$
  \State Compute $A' = \min \{ 1- \chi(S) \mid S \in S_1 \textrm{ non-orientable}, |\partial S \cap \meridian | = 1 \}$
  \State Compute $N' = \min \{ 1- \chi(S) \mid S \in S_1, |\partial S \cap \meridian | > 1 \}$
  \If{$N' < \min ( A', B')$}
    \State \Return cannot determine crosscap number
  \Else
    \State \Return $\min (A', B')$
  \EndIf 
\end{algorithmic}
\end{algorithm}

The main computational effort is in Regina's enumeration algorithm for $Q$--fundamental surfaces \cite{regina}, which in turn runs a Hilbert basis enumeration on a high-dimensional polytope. The verification of the correctness of the input also takes up 
significant -- but smaller amounts of -- computational resources. The first verification is the test for $0$-efficiency of the triangulation. The second verifies the meridian edge. Here, we perform a Dehn surgery along this edge, and then use Regina's 3--sphere recognition routine to check that the resulting 3--manifold is indeed the 3--sphere.

\subsection{Computational results}

In supporting material for \cite{burton12-crosscap}, Burton and Ozlen compiled a list of triangulations of all knot complements up to $12$ crossings that are $0$-efficient, with real boundary, and one of the boundary edges running parallel to the meridian $\meridian$ (i.e., $0$-efficient, suitable triangulations). This list is available from the webpage of the first author. Using this list of triangulations, we applied the implementation outlined in \Cref{sec:Implementation}.

The results are summarised in \Cref{tab:newcc1011,tab:newcc12}. 
Here, ``$nOr$" is the maximal Euler characteristic of a non-orientable $Q$--fundamental spanning surface, ``$or$" that of an orientable $Q$--fundamental spanning surface, and ``$nSp$" that of a $Q$--fundamental non-spanning surface with single boundary component.
As an additional check, we also ran our algorithm for a larger collection of knots for which computations are feasible.

We ran our computations on a machine with $2 \times 24$ Intel Xeon Gold6240R processors and 192GB of memory. Computations were feasible for triangulations of up to $30$ tetrahedra (on a standard laptop, triangulations up to $27$ tetrahedra can still be handled). We used roughly six months of CPU time to obtain the data in \Cref{tab:newcc1011,tab:newcc12}. We only tried Regina's default choice of Hilbert basis algorithm.

\begin{table}[h]
\caption{New $10$- and $11$-crossing crosscap numbers.\label{tab:newcc1011}} 

\begin{center}
\begin{tabular}{l@{\,\,\,\,}c@{\,\,\,\,}lll|l@{\,\,\,\,}c@{\,\,\,\,}lll|l@{\,\,\,\,}c@{\,\,\,\,}lll}

DT  & $\cross(K)$ & nOr & or & nSp & DT  & $\cross(K)$ & nOr & or & nSp & DT  & $\cross(K)$ & nOr & or & nSp \\ \hline 

$10_{157}$&$4$&$-3$&$-5$&$-7$&
$10_{159}$&$4$&$-3$&$-5$&$-4$&
$10_{164}$&$4$&$-3$&$-3$&$-6$\\
$10_{158}$&$4$&$-3$&$-5$&$-6$&
$10_{163}$&$4$&$-3$&$-5$&$-6$&&&&&\\

\multicolumn{15}{r}{} \\ 

$11n_{2}$&$4$&$-3$&$-5$&$-6$&
$11n_{59}$&$4$&$-3$&$-5$&$-5$&
$11n_{120}$&$4$&$-3$&$-7$&$-6$\\
$11n_{3}$&$4$&$-3$&$-3$&$-5$&
$11n_{75}$&$4$&$-3$&$-5$&$-6$&
$11n_{121}$&$4$&$-3$&$-5$&$-6$\\
$11n_{4}$&$4$&$-3$&$-5$&$-6$&
$11n_{76}$&$3$&$-2$&$-7$&$-6$&
$11n_{123}$&$4$&$-3$&$-3$&$-7$\\
$11n_{7}$&$4$&$-3$&$-5$&$-6$&
$11n_{77}$&$4$&$-3$&$-7$&$-5$&
$11n_{124}$&$4$&$-3$&$-5$&$-6$\\
$11n_{11}$&$4$&$-3$&$-5$&$-7$&
$11n_{78}$&$3$&$-2$&$-7$&$-5$&
$11n_{130}$&$4$&$-3$&$-5$&$-8$\\
$11n_{22}$&$4$&$-3$&$-5$&$-6$&
$11n_{83}$&$4$&$-3$&$-3$&$-6$&
$11n_{134}$&$4$&$-3$&$-3$&$-4$\\
$11n_{25}$&$4$&$-3$&$-5$&$-6$&
$11n_{86}$&$4$&$-3$&$-5$&$-6$&
$11n_{137}$&$4$&$-3$&$-5$&$-5$\\
$11n_{29}$&$4$&$-3$&$-3$&$-5$&
$11n_{87}$&$4$&$-3$&$-5$&$-7$&
$11n_{158}$&$4$&$-3$&$-7$&$-5$\\
$11n_{33}$&$4$&$-3$&$-5$&$-6$&
$11n_{89}$&$4$&$-3$&$-5$&$-5$&
$11n_{162}$&$4$&$-3$&$-3$&$-6$\\
$11n_{39}$&$4$&$-3$&$-3$&$-5$&
$11n_{93}$&$4$&$-3$&$-5$&$-5$&
$11n_{164}$&$4$&$-3$&$-5$&$-6$\\
$11n_{45}$&$4$&$-3$&$-5$&$-6$&
$11n_{100}$&$4$&$-3$&$-3$&$-5$&
$11n_{170}$&$4$&$-3$&$-3$&$-6$\\
$11n_{47}$&$4$&$-3$&$-7$&$-6$&
$11n_{109}$&$4$&$-3$&$-5$&$-6$&
$11n_{172}$&$4$&$-3$&$-5$&$-5$\\
$11n_{52}$&$4$&$-3$&$-5$&$-7$&
$11n_{112}$&$4$&$-3$&$-5$&$-6$&
$11n_{173}$&$4$&$-3$&$-7$&$-6$\\
$11n_{54}$&$4$&$-3$&$-5$&$-5$&
$11n_{114}$&$4$&$-3$&$-3$&$-7$&
$11n_{175}$&$4$&$-3$&$-5$&$-5$\\
$11n_{55}$&$4$&$-3$&$-5$&$-6$&
$11n_{117}$&$3$&$-2$&$-3$&$-4$&
$11n_{180}$&$4$&$-3$&$-5$&$-6$\\

\end{tabular}
\end{center}
\end{table}

\begin{table}
\caption{New $12$-crossing crosscap numbers. \label{tab:newcc12}}

\begin{center}
\begin{tabular}{l@{\,\,\,\,}c@{\,\,\,\,}lll|l@{\,\,\,\,}c@{\,\,\,\,}lll|l@{\,\,\,\,}c@{\,\,\,\,}lll}

DT  & $\cross(K)$ & nOr & or & nSp & DT  & $\cross(K)$ & nOr & or & nSp & DT  & $\cross(K)$ & nOr & or & nSp \\ \hline 
$12n_{1}$&$5$&$-4$&$-5$&$-6$&
$12n_{195}$&$3$&$-2$&$-5$&$-6$&
$12n_{423}$&$4$&$-3$&$-5$&$-7$\\
$12n_{7}$&$4$&$-3$&$-5$&$-5$&
$12n_{203}$&$4$&$-3$&$-5$&$-5$&
$12n_{425}$&$5$&$-4$&$-7$&$-6$\\
$12n_{8}$&$4$&$-3$&$-7$&$-6$&
$12n_{210}$&$4$&$-3$&$-5$&$-6$&
$12n_{430}$&$4$&$-3$&$-3$&$-4$\\
$12n_{10}$&$4$&$-3$&$-5$&$-6$&
$12n_{211}$&$4$&$-3$&$-5$&$-7$&
$12n_{437}$&$4$&$-3$&$-5$&$-6$\\
$12n_{11}$&$4$&$-3$&$-3$&$-6$&
$12n_{215}$&$3$&$-2$&$-5$&$-6$&
$12n_{442}$&$4$&$-3$&$-3$&$-5$\\
$12n_{16}$&$3$&$-2$&$-7$&$-5$&
$12n_{220}$&$4$&$-3$&$-7$&$-6$&
$12n_{452}$&$4$&$-3$&$-3$&$-6$\\
$12n_{19}$&$3$&$-2$&$-5$&$-6$&
$12n_{225}$&$4$&$-3$&$-3$&$-8$&
$12n_{469}$&$3$&$-2$&$-5$&$-6$\\
$12n_{20}$&$5$&$-4$&$-5$&$-6$&
$12n_{229}$&$4$&$-3$&$-7$&$-5$&
$12n_{476}$&$4$&$-3$&$-5$&$-5$\\
$12n_{24}$&$4$&$-3$&$-5$&$-6$&
$12n_{230}$&$3$&$-2$&$-5$&$-5$&
$12n_{479}$&$4$&$-3$&$-3$&$-7$\\
$12n_{38}$&$5$&$-4$&$-5$&$-7$&
$12n_{237}$&$4$&$-3$&$-5$&$-7$&
$12n_{484}$&$4$&$-3$&$-5$&$-6$\\
$12n_{40}$&$5$&$-4$&$-7$&$-7$&
$12n_{241}$&$4$&$-3$&$-5$&$-5$&
$12n_{494}$&$4$&$-3$&$-7$&$-5$\\
$12n_{42}$&$4$&$-3$&$-5$&$-5$&
$12n_{247}$&$3$&$-2$&$-3$&$-5$&
$12n_{495}$&$4$&$-3$&$-5$&$-7$\\
$12n_{43}$&$4$&$-3$&$-7$&$-7$&
$12n_{257}$&$3$&$-2$&$-5$&$-5$&
$12n_{509}$&$4$&$-3$&$-7$&$-5$\\
$12n_{45}$&$5$&$-4$&$-5$&$-6$&
$12n_{261}$&$4$&$-3$&$-7$&$-6$&
$12n_{526}$&$4$&$-3$&$-7$&$-7$\\
$12n_{51}$&$4$&$-3$&$-3$&$-6$&
$12n_{271}$&$4$&$-3$&$-5$&$-6$&
$12n_{535}$&$4$&$-3$&$-3$&$-6$\\
$12n_{53}$&$4$&$-3$&$-3$&$-7$&
$12n_{274}$&$4$&$-3$&$-3$&$-6$&
$12n_{547}$&$4$&$-3$&$-3$&$-7$\\
$12n_{56}$&$4$&$-3$&$-5$&$-6$&
$12n_{276}$&$4$&$-3$&$-5$&$-6$&
$12n_{552}$&$3$&$-2$&$-5$&$-6$\\
$12n_{63}$&$4$&$-3$&$-5$&$-6$&
$12n_{278}$&$4$&$-3$&$-3$&$-7$&
$12n_{554}$&$4$&$-3$&$-3$&$-7$\\
$12n_{64}$&$4$&$-3$&$-7$&$-5$&
$12n_{279}$&$4$&$-3$&$-3$&$-7$&
$12n_{566}$&$4$&$-3$&$-3$&$-5$\\
$12n_{67}$&$4$&$-3$&$-7$&$-6$&
$12n_{280}$&$4$&$-3$&$-5$&$-6$&
$12n_{572}$&$4$&$-3$&$-5$&$-5$\\
$12n_{68}$&$4$&$-3$&$-7$&$-6$&
$12n_{285}$&$4$&$-3$&$-5$&$-6$&
$12n_{573}$&$4$&$-3$&$-5$&$-5$\\
$12n_{71}$&$4$&$-3$&$-7$&$-4$&
$12n_{290}$&$4$&$-3$&$-5$&$-7$&
$12n_{580}$&$4$&$-3$&$-3$&$-6$\\
$12n_{73}$&$4$&$-3$&$-5$&$-6$&
$12n_{304}$&$4$&$-3$&$-5$&$-6$&
$12n_{585}$&$4$&$-3$&$-5$&$-6$\\
$12n_{74}$&$4$&$-3$&$-7$&$-6$&
$12n_{308}$&$4$&$-3$&$-5$&$-5$&
$12n_{601}$&$4$&$-3$&$-5$&$-6$\\
$12n_{78}$&$4$&$-3$&$-3$&$-6$&
$12n_{311}$&$4$&$-3$&$-3$&$-6$&
$12n_{605}$&$4$&$-3$&$-7$&$-6$\\
$12n_{82}$&$4$&$-3$&$-5$&$-7$&
$12n_{312}$&$4$&$-3$&$-5$&$-7$&
$12n_{607}$&$3$&$-2$&$-5$&$-6$\\
$12n_{84}$&$4$&$-3$&$-5$&$-7$&
$12n_{324}$&$4$&$-3$&$-3$&$-7$&
$12n_{610}$&$4$&$-3$&$-7$&$-7$\\
$12n_{89}$&$4$&$-3$&$-7$&$-7$&
$12n_{327}$&$4$&$-3$&$-7$&$-6$&
$12n_{623}$&$4$&$-3$&$-7$&$-6$\\
$12n_{93}$&$4$&$-3$&$-7$&$-6$&
$12n_{331}$&$3$&$-2$&$-5$&$-4$&
$12n_{630}$&$4$&$-3$&$-5$&$-5$\\
$12n_{97}$&$4$&$-3$&$-5$&$-6$&
$12n_{334}$&$4$&$-3$&$-3$&$-5$&
$12n_{641}$&$4$&$-3$&$-7$&$-6$\\
$12n_{104}$&$4$&$-3$&$-7$&$-6$&
$12n_{341}$&$4$&$-3$&$-5$&$-6$&
$12n_{642}$&$4$&$-3$&$-3$&$-4$\\
$12n_{106}$&$4$&$-3$&$-7$&$-5$&
$12n_{342}$&$4$&$-3$&$-3$&$-5$&
$12n_{643}$&$3$&$-2$&$-5$&$-7$\\
$12n_{116}$&$4$&$-3$&$-5$&$-6$&
$12n_{343}$&$4$&$-3$&$-3$&$-5$&
$12n_{650}$&$4$&$-3$&$-3$&$-5$\\
$12n_{124}$&$4$&$-3$&$-3$&$-6$&
$12n_{345}$&$4$&$-3$&$-7$&$-6$&
$12n_{674}$&$4$&$-3$&$-7$&$-6$\\
$12n_{129}$&$4$&$-3$&$-5$&$-6$&
$12n_{354}$&$4$&$-3$&$-5$&$-5$&
$12n_{688}$&$4$&$-3$&$-9$&$-6$\\
$12n_{134}$&$4$&$-3$&$-7$&$-5$&
$12n_{355}$&$3$&$-2$&$-5$&$-5$&
$12n_{699}$&$4$&$-3$&$-3$&$-6$\\
$12n_{146}$&$4$&$-3$&$-3$&$-6$&
$12n_{359}$&$4$&$-3$&$-3$&$-7$&
$12n_{709}$&$4$&$-3$&$-7$&$-5$\\
$12n_{150}$&$4$&$-3$&$-7$&$-6$&
$12n_{360}$&$4$&$-3$&$-3$&$-6$&
$12n_{718}$&$4$&$-3$&$-5$&$-6$\\
$12n_{152}$&$4$&$-3$&$-5$&$-6$&
$12n_{362}$&$4$&$-3$&$-5$&$-6$&
$12n_{719}$&$4$&$-3$&$-5$&$-6$\\
$12n_{154}$&$4$&$-3$&$-5$&$-5$&
$12n_{366}$&$4$&$-3$&$-5$&$-6$&
$12n_{726}$&$4$&$-3$&$-3$&$-5$\\
$12n_{160}$&$4$&$-3$&$-5$&$-6$&
$12n_{377}$&$4$&$-3$&$-5$&$-5$&
$12n_{730}$&$4$&$-3$&$-5$&$-6$\\
$12n_{162}$&$4$&$-3$&$-5$&$-5$&
$12n_{379}$&$4$&$-3$&$-5$&$-6$&
$12n_{739}$&$3$&$-2$&$-7$&$-5$\\
$12n_{170}$&$4$&$-3$&$-3$&$-6$&
$12n_{381}$&$4$&$-3$&$-3$&$-6$&
$12n_{764}$&$4$&$-3$&$-5$&$-5$\\
$12n_{179}$&$4$&$-3$&$-5$&$-7$&
$12n_{383}$&$4$&$-3$&$-3$&$-5$&
$12n_{797}$&$4$&$-3$&$-3$&$-6$\\
$12n_{185}$&$4$&$-3$&$-7$&$-5$&
$12n_{388}$&$4$&$-3$&$-5$&$-5$&
$12n_{808}$&$4$&$-3$&$-5$&$-5$\\
$12n_{187}$&$4$&$-3$&$-7$&$-7$&
$12n_{390}$&$4$&$-3$&$-5$&$-6$&
$12n_{824}$&$4$&$-3$&$-5$&$-7$\\
$12n_{188}$&$4$&$-3$&$-7$&$-6$&
$12n_{397}$&$4$&$-3$&$-5$&$-6$&
$12n_{870}$&$3$&$-2$&$-5$&$-5$\\
$12n_{190}$&$4$&$-3$&$-7$&$-5$&
$12n_{407}$&$3$&$-2$&$-5$&$-5$&
$12n_{884}$&$4$&$-3$&$-3$&$-7$\\
$12n_{193}$&$3$&$-2$&$-5$&$-6$&
$12n_{418}$&$3$&$-2$&$-7$&$-5$&

\end{tabular}
\end{center}
\end{table}

\subsection{Surfaces realising crosscap number}

One interesting aspect of our algorithms is the relationship between our numbers $A$ and $B$ from  \Cref{thm:crosscap-suitable} and $A'$ and $B'$ from \Cref{thm:Q-algo}. We have $\min(A, B) =c(K) = \min(A', B')$.  Since $Q$--fundamental surfaces are fundamental surfaces, we also have $A\le A'$ and $B\le B'.$ 

We have the following observations from our computations:

\begin{enumerate}
  \item If $A' < B'$, then $A' = A \le B \le B'.$ 
  This is the most common case for small crossing knots.  Smallest examples are the trefoil and the figure-$8$ knot. Moreover, the gap between $A'$ and $B'$ can be arbitrarily large, as can be seen from the torus knots $T(2,2k+1)$, $k \geq 1$: the crosscap number of these knots is $1$, whereas the knot genus is $k$. In other words, we have $A' = A=1< 2k+1=B\le B'$.
  \item If $A'=B'$, then $A'=A = B =B'.$ Here, the crosscap number is realised by a $\partial$-compressible surface obtained from a minimum genus Seifert surface with a M\"obius band attached, but the existence of a $\partial$-incompressible non-orientable spanning surface realising it is not excluded. Smallest knots with this property are $7a_6$ ($7_4$), $8a_{18}$ ($8_3$). All such knots must necessarily have genus $k$ and crosscap number $2k+1$. It is worthwhile mentioning that $7a_6$ is known not to admit a $\partial$-incompressible non-orientable spanning surface realising the crosscap number, see \cite{ICHIHARA2002467} and the references therein.
  \item If $A'>B',$ then $A' \ge A \ge B = B'.$ Interesting examples from our calculations are:
\begin{enumerate}
\item  
  The case $A'>A=B'=B$ occurred for the eleven crossing knot $11a_{362}$ of genus one and crosscap number three, where $A'=4> 3 = A=B=B'$ for the suitable triangulation with Regina isomorphism signature
  
   \centerline{\texttt{uLLvMPvwMwAMQkcacfgihjmklnnrqstrqrtnkvjhavkbveekgjxfcvp}. }
   
   In standard coordinates, fundamental normal surfaces realising the spanning punctured torus $T$, and a spanning non-orientable surface $S$ of Euler characteristic $-2$ have complementary boundary slopes, and $\mu(\partial T)=1=\mu(\partial S)$. In quadrilateral coordinates, we have $S + D = A + T$, where $A$ is a fundamental boundary parallel annulus with boundary curves parallel to $\partial S$, and $D$ is the vertex linking disk. In particular, using the notation from \Cref{sec:quad_space}, we have $k_1=1$, $k_0=0$, ${F_i} = {A}$, ${G_j} = {S}$, and $k'_0 = k''_0 = 0$. 
  \item The case $A'=A > B = B'$ occurred for the eleven crossing knot $11n_{139}$. This knot has genus one, and crosscap number three, but $A'=A=4> 3 = B = B'$.   
  \end{enumerate}
  \item In all examples, where we computed all of $A, A', B, B'$, we observed $B = B'.$
\end{enumerate}


\bibliographystyle{plain}
\bibliography{references}

\address{William Jaco\\Department of Mathematics, Oklahoma State University, Stillwater, OK 74078-1058, USA\\{jaco@math.okstate.edu}\\-----}

\address{J. Hyam Rubinstein\\School of Mathematics and Statistics, The University of Melbourne, VIC 3010, Australia\\
{joachim@unimelb.edu.au}\\----- }

\address{Jonathan Spreer\\School of Mathematics and Statistics F07, The University of Sydney, NSW 2006 Australia\\{jonathan.spreer@sydney.edu.au\\-----}}

\address{Stephan Tillmann\\School of Mathematics and Statistics F07, The University of Sydney, NSW 2006 Australia\\{stephan.tillmann@sydney.edu.au}}

\Addresses
                                                      
\end{document}